\documentclass[a4paper,12pt]{article}
\usepackage[utf8]{inputenc}
\usepackage{graphics} 
\usepackage[dvips,pdftex]{graphicx} 
\usepackage[usenames,dvipsnames]{color} 
\usepackage[T1]{fontenc} 
\usepackage{amsfonts} 
\usepackage{enumerate} 
\usepackage{amsthm}
\usepackage{amsmath} 
\usepackage[pdftex]{hyperref}  
\hypersetup{colorlinks=true,linkcolor=red,citecolor=green} 

\bibliographystyle{plain}

\newtheorem{Pro}{Proposition}[section]
\newtheorem{Teo}{Theorem}[section]
\newtheorem{Lema}{Lemma}[section]
\newtheorem{Cor}{Corollary}[section]
\newtheorem{remark}{Remark}
\newtheorem{Def}{Definition}

\newenvironment{demThs}{\textit{Proof of Theorem~\ref{Thsistema}.}}{\hfill$\square$}

\usepackage[tmargin=3cm,bmargin=2cm,lmargin=3cm,rmargin=2cm]{geometry}
\linespread{1.5}


\definecolor{cinza}{gray}{.8}
\definecolor{branco}{gray}{1}
\definecolor{preto}{gray}{0}
\definecolor{verdemusgo}{rgb}{.3,.7,.5}
\definecolor{vinho}{cmyk}{0,1,1,.5}
\setcounter{secnumdepth}{1}
\newcommand{\R}{\mathbb{R}}

\usepackage{graphicx}

\title{Unilateral global bifurcation for a class of quasilinear elliptic systems and applications\footnote{CMR and AS have been partially supported for the project MTM2015-69875-P (MINECO/FEDER, UE) and AS by the project CNPQ-Proc. 400426/2013-7. }}
\author{W. Cintra$^1$, C. Morales-Rodrigo$^2$ and A. Suárez$^2$}

\begin{document}

\maketitle
\begin{center}
1. Departamento de Matemática, Universidade de Bras\'ilia, \\
70910-900, Brasília - DF, Brazil \\
2. Dpto. de Ecuaciones Diferenciales y Análisis Numérico \\
Fac. de Matemáticas, Univ. de Sevilla\\
Calle Tarfia s/n - Sevilla Spain

E-mail addresses: willian@unb.br, cristianm@us.es, suarez@us.es
\end{center}

\begin{abstract}
In this paper we establish a unilateral bifurcation result for a class of quasilinear elliptic system strongly coupled, extending the bifurcation theorem of \cite{LGapela}. To this aim, we use several results, such that Fredholm operator of index $0$ theory, eigenvalues of elliptic operators and the Krein-Rutman theorem. Lastly, we apply this result to some particular systems arising from population dynamics and determine a region of existence of coexistence states. 
\end{abstract}

\textbf{Keywords:} quasilinear elliptic system; global bifurcation; coexistence states;

\textbf{Mathematics Subject Classification (2010):} 
35B32, 
35J57, 
35J62 
47J15, 
47A13, 
92D25. 
\section{Introduction}\label{sec1}
In this paper we will study via bifurcation theory the following quasilinear elliptic system
\begin{equation}\label{sistema}
    \left\{ \begin{array}{ll}
         -\mbox{div}(P(u,v) \nabla u + S(u,v) \nabla v) =\lambda a(x)u + f(x,u)u+ F(x,u,v)uv & \mbox{in}~\Omega,\\
         -\mbox{div}(Q(u,v) \nabla u + R(u,v) \nabla v) = \mu b(x)v + g(x,v)v+ G(x,u,v)uv &\mbox{in}~\Omega,\\
         u=v=0 & \mbox{on}~\partial \Omega,
    \end{array} \right.
\end{equation}
where $\Omega \subset \R^N$, $N \geq 1$ is a bounded domain with a smooth boundary. We assume  
\begin{enumerate}
    \item[($H_{PQRS}$)] $P(u,v)$, $R(u,v)$, $Q(u,v)$ and $S(u,v)$ are real functions defined in $[0,+\infty)\times [0,+\infty)$ of class $\mathcal{C}^2$ such that:
    \begin{equation}\label{hipq}
        Q(u,0)=0 \quad      \forall u \geq 0,
    \end{equation}            
    \begin{equation}
        S(0,v) = 0 \quad      \forall v \geq 0,
    \end{equation}
    \begin{equation}\label{determinante}
        |P(u,v)R(u,v)-Q(u,v)S(u,v)| \geq \delta_0>0 \quad      \forall u,v\geq 0  
    \end{equation}
    and
    \begin{equation}\label{PR}
        P(u,v)\geq P_0 >0, \quad R(u,v)\geq R_0>0 \quad      \forall u,v\geq 0, 
    \end{equation}
    where $\delta_0$, $P_0$ and $R_0$ are positive constants.
    \item[($H_{ab}$)] $a,b:\overline{\Omega} \rightarrow [0,\infty)$ are continuous, non-negative and nontrivial functions.
\end{enumerate}
With respect to the reaction terms, we consider $\lambda,\mu\in\R$ as bifurcation parameters, and we also assume:
\begin{enumerate}
    \item[($H_{fg}$)] $f(x,w)$ and $g(x,w)$ are real functions defined in  $\overline{\Omega} \times \R$, continuous in  $x$ and of class $\mathcal{C}^1$ in $w$ such that
    $$f(x,0)=g(x,0)=0 \quad \forall x \in \overline{\Omega};$$
    \item[($H_{FG}$)] $F(x,u,v)$ and $G(x,u,v)$ are real functions defined in  $\overline{\Omega}\times \R^2$, continuous in $x$ and of class $\mathcal{C}^1$ in $(u,v)$.
\end{enumerate}
Observe that our hypotheses are similar to those in \cite{Bifbook} (see Section 7.2).

System (\ref{sistema})  is of particular interest from the point of view of the applications and it arises in many important problems, such that reaction-diffusion models in population dynamics (see \cite{Cosner,CC2003,SKT}) and the Keller-Segel models (see \cite{chemotaxis, SK2003}). For instance, in population dynamics the insertion of nonlinear diffusion terms in the left side of (\ref{sistema}) describes more realistic situation if compared with semilinear case. In this context, the functions $Q$,$S$ and $P$,$R$ are often called  cross-diffusion and self-diffusion terms, respectively.

From the mathematical point of view, there exists an extensive list of articles that deal with the following particular case of (\ref{sistema})
\begin{equation}\label{ss2}
    \left\{ \begin{array}{ll}
         -\Delta(\varphi(u,v)) =\lambda a(x)u + f(x,u)u+ F(x,u,v)uv & \mbox{in}~\Omega,\\
         -\Delta(\psi(u,v)) = \mu b(x)v + g(x,v)v+ G(x,u,v)uv & \mbox{in}~\Omega,
    \end{array} \right.
\end{equation}
with some boundary conditions, specially motivated by the well-known model  proposed by  Shigesada, Kawasaki and Teramoto \cite{SKT}. Moreover, several techniques have been used to deal with (\ref{ss2}), for instance fixed point index \cite{Ruan,ruan1999,ryu,LouNi,JunKim2014}, sub-supersolution methods \cite{pao2005} and bifurcation theory \cite{DuYuan,DMS2009,KatoY2004,yamada96,yamada2004,wang2015, Kuto2009} and references therein.

However, for  general systems (\ref{sistema}) there are fewer results available in the literature. We mention  \cite{pontofixo} that applies fixed point methods for the case in that $P,Q,R,S$ are suitable polynomial function on $u$ and $v$, which is not required here, and the interesting bifurcation results developed by \cite{shi} which have been widely used in recent years (see, for instance, \cite{WangYanGai, wang2015,memoria2016,ChenTongWang}). We point out that the main theorem of \cite{shi} is based on Degree theory for $\mathcal{C}^1$ Fredholm mappings of index $0$ of \cite{Pejsachowicz} and the unilateral bifurcation results of \cite{Bifbook} (see the proof of Theorem 4.4 in \cite{shi}). The result provides  sufficient conditions to obtain a unilateral global bifurcation result of positive solutions of a class of quasilinear elliptic systems. Our results  are also based on  \cite{Bifbook} (see also \cite{LG-MC2005}), but with a different approach. Precisely, we will extend the bifurcation results for semilinear system of \cite{LGapela}  to the quasilinear case (\ref{sistema}), which requires the existence of a non-degenerate semitrivial positive solution (in a sense that will be defined later) to guarantee the existence of global bifurcation of non-negative solutions and provides better information on the continuum obtained. 

We also refer to \cite{WCS4} where the authors developed a  bifurcation result to analyze a predator-prey system which is a particular case of (\ref{sistema}). Actually, here we extend this result to the general system (\ref{sistema}).

It should be noted that a standard approach to deal with (\ref{ss2}) is to apply the change of variables $U=\varphi(u,v)$ and $V=\psi(u,v)$, transforming (\ref{ss2}) into a semilinear  system, decoupled in the diffusion, where one can apply the techniques which worked for such systems, for instance, the bifurcation theorems of \cite{LGapela}. However, (\ref{sistema}) can not be written as (\ref{ss2}) because there does not exist an immediate  change variable that transforms (\ref{sistema}) into a semilinear system. 



In order to state our main result, we need to make some considerations. First, we search for non-negative strong solutions of (\ref{sistema}) (in $W_0^{2,p}(\Omega)\times W_0^{2,p}(\Omega)$, for all $p>1$). Observe that (\ref{sistema}) admits three types of non-negative strong solutions: the trivial solution $(0,0)$; the semitrivial positive solutions $(u,0)$ and $(0,v)$ where $u$ and $v$ are positive solutions of 
\begin{equation}\label{stl}
    \left\{ \begin{array}{ll}
    -\mbox{div}(P(u,0) \nabla u) = \lambda a(x) u + f(x,u)u     &\mbox{in}~\Omega,  \\
    u=0     & \mbox{on}~\partial\Omega,
    \end{array}\right.
\end{equation}
and
\begin{equation}\label{stm}
    \left\{ \begin{array}{ll}
    -\mbox{div}(R(0,v) \nabla v) = \mu b(x)v + g(x,v)v     &\mbox{in}~\Omega,  \\
    v=0     & \mbox{on}~\partial\Omega,
    \end{array}\right.
\end{equation}
respectively; and the \textit{coexistence states} $(u,v)$ with both components positive. With respect to the positive semitrivial solutions we also need the following definition:
\begin{Def}\label{degenerada}
Let $(\lambda,\theta_\lambda)$ be a non-negative solution of (\ref{stl}). $(\lambda,\theta_\lambda)$  is a non-degenerate solution of (\ref{stl}) if zero is the unique strong solution of the linearization of  (\ref{stl}) at $\theta_\lambda$, which is given by
\begin{equation}\label{ndegen}
\left\{    \begin{array}{ll}
         -\mbox{div}(P_u(\theta_\lambda,0) u \nabla \theta_\lambda + P(\theta_\lambda,0) \nabla u)  = \lambda a(x)u +[\theta_\lambda  f_u(x,\theta_\lambda)+f(x,\theta_\lambda)] u&\mbox{in}~\Omega,  \\
         u=0&\mbox{on}~\Omega.
    \end{array}\right.
\end{equation}
\end{Def}
In an analogous way, the non-degenerate solution of (\ref{stm}) is defined.

Now, we will introduce some notations with respect to an important eigenvalue problem. Let ${\cal P}$ denote the positive cone in $\mathcal{C}_0^1(\overline{\Omega})$ whose  interior (notation: $\mbox{int}({\cal P})$) is nonempty. 

On the other hand, given $A \in \mathcal{C}^1(\overline{\Omega})$ and $B,C\in \mathcal{C}(\overline{\Omega})$, satisfying
$C(x) >0$, $A(x) \geq A_0>0$ for $x\in \Omega$ and a suitable constant $A_{0}$, we denote by
$$
\sigma_1[-\mbox{div} (A\nabla ) + B(x); C(x)]
$$
the principal eigenvalue  of
\begin{equation*}\label{autovalor}
\left\{\begin{array}{ll}
    -\mbox{div} \left(A(x) \nabla u\right) + B(x)u =\lambda C(x)u &\mbox{in}~\Omega,  \\
     u=0&\mbox{on}~\partial\Omega.
\end{array}  \right.
\end{equation*}
It is well-known (see for instance \cite{LGauto}) that $\sigma_1[-\mbox{div} (A\nabla ) + B(x); C(x)]$ is increasing with respect to $A$ 
and $B$. Moreover,  it is decreasing (resp. increasing) with respect to $C$ if $\sigma_1[-\mbox{div} (A\nabla ) + B(x); 1]>0$ (resp.$\sigma_1[-\mbox{div} (A\nabla ) + B(x); 1]<0$. See, for instance, \cite{SMPbook,Djairo}.

Now, let  $B,C \in \mathcal{C}(\overline{\Omega})$, $M_1,M_2 \in \mathcal{C}^2(\R)$, $M_0$ constant and $v \in \mathcal{C}^2(\overline{\Omega}) \cap \mbox{int}({\cal P})$ such that $M_2 \geq M_0>0$, $C>0$. Consider the following eigenvalue problem
\begin{eqnarray}\label{autoaux0}
\left\{ \begin{array}{ll}
-\mbox{div} (M_1(v)u\nabla v + M_2(v)\nabla u) + B(x) u = \lambda C(x) u     &\mbox{in}~ \Omega,  \\
u=0     &\mbox{on}~ \partial \Omega,
\end{array}   \right.
\end{eqnarray}
whose principal eigenvalue will be denoted by 
$$\sigma_1[-\mbox{div} (M_1(v)\nabla v + M_2(v)\nabla ) + B(x); C(x)].$$
On the other hand,  let $\varphi$ be a positive eigenfunction of  (\ref{autoaux0}) with $\|\varphi\|_0=1$. Using the change of variable
$$z = \varphi e^{h(v)} \Leftrightarrow  z e^{-h(v)}= \varphi, \quad h(v):=\int_0^v\frac{M_1(s)}{M_2(s)}ds$$
in (\ref{autoaux0}) with $(\lambda,u)=(\lambda_0,\varphi)$, where $\lambda_0 = \sigma_1[-\mbox{div} (M_1(v)\nabla v + M_2(v)\nabla ) + B(x); C(x)]$, we obtain
\begin{eqnarray*}
\left\{ \begin{array}{ll}
-\mbox{div} (M_2(v)e^{-h(v)}\nabla z) + B(x) ze^{-h(v)} = \lambda_0 C(x) z e^{-h(v)}     &\mbox{in}~ \Omega,  \\
z=0     &\mbox{on}~ \partial \Omega.
\end{array}   \right.
\end{eqnarray*}
Since $z = \varphi e^{-h(v)} >0$, it follows that 
$$\lambda_0 =  \sigma_1[-\mbox{div} (M_2(v)e^{-h(v)}\nabla) + B(x) e^{-h(v)};C(x) e^{-h(v)}].$$
Thus, 
\begin{multline}\label{av}
    \sigma_1[-\mbox{div} (M_1(v)\nabla v + M_2(v)\nabla ) + B(x); C(x)] = \cr \sigma_1[-\mbox{div} (M_2(v)e^{-h(v)}\nabla) + B(x) e^{-h(v)};C(x) e^{-h(v)}].
\end{multline}

Finally, to state our main result, we define
$$h_1,h_2:[0,\infty) \rightarrow \R$$
given by
\begin{equation}\label{h1}
    h_1(z):= \int_0^z\frac{Q_v(s,0)}{R(s,0)}ds
\end{equation}
and
\begin{equation}\label{h2}
    h_2(z):=\int_0^z\frac{S_u(0,s)}{P(0,s)}ds.
\end{equation}

Thus, we have

\begin{Teo}\label{Thsistema}
Suppose that  ($H_{PQRS}$), ($H_{ab}$), ($H_{fg}$) and ($H_{FG}$) are satisfied. Let  $(\lambda, \theta_\lambda) \in \R \times \mbox{int}({\cal P})$ be a nondegenerate solution of  (\ref{stl}) and consider
\begin{eqnarray}\label{mulambda}
    \mu_\lambda&:=&\sigma_1\left[-\mbox{div}\left(Q_v(\theta_\lambda,0)\nabla \theta_\lambda+R(\theta_\lambda,0) \nabla\right) - G(x,\theta_\lambda,0)\theta_\lambda;b \right] \nonumber\\
    &=& \sigma_1[-\mbox{div}( R(\theta_\lambda,0) e^{-h_1(\theta_\lambda)}\nabla) - G(x,\theta_\lambda,0)\theta_\lambda e^{-h_1(\theta_\lambda)}; be^{-h_1(\theta_\lambda)}].
\end{eqnarray}
Then, from the point  $(\mu,u,v)=(\mu_\lambda,\theta_\lambda,0)$ emanates a continuum
$$\mathfrak{C} \subset \mathbb{R}\times \mbox{int}({\cal P})\times\mbox{int}({\cal P})$$
of coexistence states of (\ref{sistema}) such that either: 
\begin{enumerate}[1.]
    \item $\mathfrak{C}$ is unbounded in $\R \times \mathcal{C}_0^1(\overline{\Omega})\times \mathcal{C}_0^1(\overline{\Omega})$; 
    \item There exists a positive solution $(\mu^*,\theta_{\mu^*})$ of (\ref{stm})
such that
\begin{eqnarray*}
\lambda &=&  \sigma_1\left[-\mbox{div}\left(S_u(0,\theta_{\mu^*})\nabla\theta_{\mu^*} + P(0,\theta_{\mu^*})\nabla\right) - F(x,0,\theta_{\mu^*})\theta_{\mu^*};a\right] \\
&=& \sigma_1[-\mbox{div}( P(0,\theta_{\mu^*}) e^{-h_2(\theta_{\mu^*})}\nabla) - F(x,\theta_{\mu^*},0)\theta_{\mu^*} e^{-h_2(\theta_{\mu^*})}; ae^{-h_2(\theta_{\mu^*})}]    
\end{eqnarray*}
and $(\mu^*,0,\theta_{\mu^*}) \in \overline{\mathfrak{C}}$; 
\item There exists another positive solution of (\ref{stl}), say $(\lambda,\psi_\lambda)$, with $\psi_\lambda \neq \theta_\lambda$ such that
\begin{eqnarray*}
(\sigma_1\left[-\mbox{div}\left(Q_v(\psi_\lambda,0)\nabla \psi_\lambda+R(\psi_\lambda,0) \nabla\right) - G(x,\psi_\lambda,0)\psi_\lambda;b \right], \psi_\lambda,0) =\\
(\sigma_1[-\mbox{div}( R(\psi_\lambda,0) e^{-h_1(\psi_\lambda)}\nabla) - G(x,\psi_\lambda,0)\psi_\lambda e^{-h_1(\psi_\lambda)}; be^{-h_1(\psi_\lambda)}],\psi_\lambda,0)
\in \overline{\mathfrak{C}};
\end{eqnarray*}
\item $\lambda = \sigma_1[-\mbox{div}(P(0,0)\nabla);a]$ and $(\sigma_1[-\mbox{div}(R(0,0)\nabla);b],0,0) \in \overline{\mathfrak{C}}.$
\end{enumerate}
\end{Teo}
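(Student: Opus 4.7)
The plan is to reformulate \eqref{sistema} with $\lambda$ fixed and $\mu$ as bifurcation parameter as an abstract equation $\mathfrak{F}(\mu,u,v)=0$ on $\R\times X$, where $X:=(W^{2,p}_0(\Omega))^2$ for some $p>N$, with values in $Y:=(L^p(\Omega))^2$. Concretely, I translate the bifurcation point to the origin by setting $w:=u-\theta_\lambda$ and study zeros of the map $(\mu,w,v)\mapsto\mathfrak{F}(\mu,\theta_\lambda+w,v)$. Under $(H_{PQRS})$–$(H_{FG})$ the principal part $-\mathrm{div}(P\nabla u+S\nabla v,Q\nabla u+R\nabla v)$ together with \eqref{determinante} and \eqref{PR} yields, at any non-negative point, a uniformly elliptic system; hence the Fréchet derivative $D_{(u,v)}\mathfrak{F}$ is a compact perturbation of an isomorphism $X\to Y$, so $\mathfrak{F}$ is of class $\mathcal{C}^1$ with $D_{(u,v)}\mathfrak{F}$ Fredholm of index~$0$. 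This is the essential technical point that allows us to use the generalized (Fitzpatrick–Pejsachowicz–Rabier) unilateral theory in the form developed in \cite{Bifbook,LGapela}.

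Next, I compute the linearization at $(\mu,\theta_\lambda,0)$. Differentiating the first equation and using $Q(u,0)=0$ and $S(0,v)=0$, the $v$-component of the kernel equation decouples and reads exactly
\begin{equation*}
-\mathrm{div}(Q_v(\theta_\lambda,0)\nabla\theta_\lambda\,v+R(\theta_\lambda,0)\nabla v)-G(x,\theta_\lambda,0)\theta_\lambda v=\mu\,b(x)v,
\end{equation*}
which is precisely the eigenvalue problem \eqref{autoaux0} with $M_1=Q_v(\cdot,0)$, $M_2=R(\cdot,0)$ and $h=h_1$. By the transformation \eqref{av} this problem is equivalent to a self-adjoint (weighted Sturm–Liouville type) one, so the Krein–Rutman theorem yields a simple principal eigenvalue $\mu_\lambda$ with a positive eigenfunction $\varphi>0$. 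The non-degeneracy hypothesis on $(\lambda,\theta_\lambda)$ is what guarantees that the first equation, linearized at $\theta_\lambda$, is invertible, so the full kernel at $\mu=\mu_\lambda$ is one-dimensional and spanned by $(u_0,\varphi)$ for a unique $u_0$. A direct computation (differentiating $D_{(u,v)}\mathfrak{F}$ in $\mu$ and pairing with the adjoint eigenfunction) gives the Crandall–Rabinowitz transversality $D_\mu D_{(u,v)}\mathfrak{F}(\mu_\lambda,\theta_\lambda,0)[u_0,\varphi]\notin \mathrm{Rg}(D_{(u,v)}\mathfrak{F})$, because $\int b\,\varphi^2 e^{-h_1(\theta_\lambda)}>0$.

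With Fredholm + simple eigenvalue + transversality in place, I can apply the unilateral global bifurcation theorem in the form of \cite{LGapela,Bifbook} to obtain a continuum $\mathfrak{C}$ of solutions emanating from $(\mu_\lambda,\theta_\lambda,0)$ along the direction of positive eigenfunction, so that near the bifurcation point $\mathfrak{C}\subset\R\times \mathrm{int}({\cal P})\times\mathrm{int}({\cal P})$; standard Hopf/strong maximum principle applied to each equation of \eqref{sistema} (after writing it as a linear elliptic equation in $u$, resp. $v$, with suitable coefficients) keeps both components strictly positive along $\mathfrak{C}$. The unilateral theorem of \cite{LGapela} then asserts that $\mathfrak{C}$ either is unbounded in $\R\times\mathcal{C}^1_0\times\mathcal{C}^1_0$, or meets the set of other non-negative solutions of \eqref{sistema} at a point where the associated linearization is singular.

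It remains to identify these exit points. Since along $\mathfrak{C}$ both $u,v>0$, the only way to leave $\mathrm{int}({\cal P})\times\mathrm{int}({\cal P})$ is to approach the boundary, namely $(0,0)$, a semitrivial solution $(\mu^*,0,\theta_{\mu^*})$, or another semitrivial $(\mu,\psi_\lambda,0)$ with $\psi_\lambda\neq\theta_\lambda$. At any such accumulation point the linearized operator must be singular; repeating at these candidate points the computation of the kernel equation performed above, but now for the transverse component, gives exactly the eigenvalue characterizations in alternatives (2)–(4) (using \eqref{av} with $M_2=P(0,\cdot)$, $h=h_2$ in (2), and the trivial case $v=0$, $u=0$ in (4) which reduces to $\sigma_1[-\mathrm{div}(P(0,0)\nabla);a]=\lambda$ and analogously for $b$). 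The main obstacle is the Fredholm/transversality verification for the quasilinear system with fully general cross-diffusion $P,Q,R,S$; once that is secured, the $\mu$–eigenvalue structure and the four alternatives follow by the weighted-eigenvalue identity \eqref{av} and the general unilateral theorem.
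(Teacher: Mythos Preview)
Your overall strategy is sound and close in spirit to the paper's: triangular linearization via $Q(u,0)=0$, one-dimensional kernel from non-degeneracy of $\theta_\lambda$ plus Krein--Rutman simplicity of $\mu_\lambda$, transversality, then a unilateral global theorem from \cite{Bifbook}. Two points deserve comment.

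\textbf{Different functional setup.} The paper does not work on $W^{2,p}_0\times W^{2,p}_0\to L^p\times L^p$ with abstract Fredholm machinery. Instead it inverts \eqref{okk} to write the system as a fixed-point equation on $\mathcal{C}^1_0(\overline\Omega)^2$, adding an auxiliary first-order term $Z(v)$ to the second equation so that the resulting operator $T_{\mu_\lambda}$ is \emph{strongly positive} on $\mathcal{C}^1_0$. This makes $\mathfrak{L}(\mu)$ a compact perturbation of the identity, lets Krein--Rutman give $r(T_{\mu_\lambda})=1$ directly, and---crucially---yields the nonexistence statement ``$v-T_{\mu_\lambda}v=y$ has no solution for $0\le y\neq 0$'' (Corollary~\ref{Tsol}), which is used both for transversality and later in the proof. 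Your $W^{2,p}$ route and duality-pairing transversality can be made to work, but you will need some substitute for this positivity/nonexistence ingredient.

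\textbf{A genuine gap in the exit analysis.} You write that the unilateral theorem asserts ``$\mathfrak{C}$ is unbounded or meets the set of other non-negative solutions at a point where the linearization is singular''. That is not what Theorem~6.4.3 of \cite{Bifbook} gives: its alternatives are (A1) unbounded, (A2) the branch returns to the trivial curve $(\widehat\mu,\theta_\lambda,0)$ at another eigenvalue, or (A3) the branch contains a point in a fixed complement $Y$ of the kernel. The paper spends real work translating (A1)--(A3) into alternatives 1--4 of the theorem. First it shows that if the positive subcontinuum $\mathfrak C$ equals the full $\mathfrak C^+$, then (A3) is impossible because any $y_2\in R[I-T_{\mu_\lambda}]\cap\mathrm{int}(\mathcal P)$ would contradict Corollary~\ref{Tsol}, and (A2) is impossible because a blow-up $\widehat v_n:=v_n/\|v_n\|_0$ along a sequence converging to $(\widehat\mu,\theta_\lambda,0)$ produces a \emph{positive} limit $w$ solving the linearized $v$-equation, forcing $\widehat\mu=\mu_\lambda$. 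Hence $\mathfrak C\subsetneq\mathfrak C^+$, so $\mathfrak C$ actually reaches $\partial(\mathcal P\times\mathcal P)$. Second, at such a boundary point $(\mu^*,u^*,v^*)$ one does \emph{not} know a priori that ``the linearized operator is singular''; instead the paper normalizes the vanishing component (e.g.\ $\widehat u_n:=u_n/\|u_n\|_0$ when $u^*=0$), uses compactness of $(-\Delta)^{-1}$ and $(-\Delta+Z)^{-1}$ to extract a positive limit, and identifies that limit as a principal eigenfunction, which is what pins down the eigenvalue conditions in alternatives 2--4 (and, in Case~2, the additional argument that $\psi_\lambda\neq\theta_\lambda$). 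This blow-up/compactness step is the missing idea in your sketch; without it, the passage from the abstract unilateral alternatives to the concrete statements about $\sigma_1[\cdots]$ is not justified.
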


In the same way, we can fix $\mu$ and consider $\lambda$ as a bifurcation parameter. Thus, we have:

\begin{Teo}\label{Thsistema'}
Suppose that ($H_{PQRS}$), ($H_{ab}$), ($H_{fg}$) end ($H_{FG}$) are satisfied. Let $(\mu, \theta_\mu) \in \R \times \mbox{int}({\cal P})$ be a nondegenerate solution of (\ref{stm}) and consider
\begin{eqnarray}\label{lambdamu}
\lambda_\mu &=&  \sigma_1\left[-\mbox{div}\left(S_u(0,\theta_{\mu})\nabla\theta_{\mu} + P(0,\theta_{\mu})\nabla\right) - F(x,0,\theta_{\mu})\theta_{\mu};a\right] \nonumber\\
&=&  \sigma_1[-\mbox{div}( P(0,\theta_{\mu}) e^{-h_2(\theta_{\mu})}\nabla) - F(x,\theta_{\mu},0)\theta_{\mu} e^{-h_2(\theta_{\mu})}; ae^{-h_2(\theta_\mu)}].
\end{eqnarray}
Then, from the point $(\lambda,u,v)=(\lambda_\mu,0,\theta_\mu)$ emanates a continuum 
$$\mathfrak{C} \subset \mathbb{R}\times \mbox{int}({\cal P})\times\mbox{int}({\cal P})$$
of coexistence states of (\ref{sistema}) such that either:
\begin{enumerate}[1.]
    \item $\mathfrak{C}$ is unbounded in $\R \times \mathcal{C}_0^1(\overline{\Omega})\times \mathcal{C}_0^1(\overline{\Omega})$; 
    \item There exists a positive solution $(\lambda^*,\theta_{\lambda^*})$ of (\ref{stl}) such that
\begin{eqnarray*}
\mu&=&\sigma_1\left[-\mbox{div}\left(Q_v(\theta_{\lambda^*},0)\nabla \theta_{\lambda^*}+R(\theta_{\lambda^*},0) \nabla\right) - G(x,\theta_{\lambda^*},0)\theta_{\lambda^*};b \right]\\
&=&  \sigma_1[-\mbox{div}( R(\theta_{\lambda^*},0) e^{-h_1(\theta_{\lambda^*})}\nabla) - G(x,\theta_{\lambda^*},0)\theta_{\lambda^*} e^{-h_1(\theta_{\lambda^*})}; be^{-h_1(\theta_{\lambda^*})}]
\end{eqnarray*}
and $(\lambda^*,\theta_{\lambda^*},0) \in \overline{\mathfrak{C}}$; 
\item There exists another positive solution of (\ref{stm}), say $(\mu,\psi_\mu)$, with $\psi_\mu \neq \theta_\mu$ such that
\begin{eqnarray*}
(\sigma_1\left[-\mbox{div}\left(S_u(0,\psi_\mu)\nabla\psi_\mu + P(0,\psi_\mu)\nabla\right) - F(x,0,\psi_\mu)\psi_\mu;a\right],0, \psi_\mu)= \\
 (\sigma_1[-\mbox{div}( P(0,\psi_\mu) e^{-h_2(\psi_\mu)}\nabla) - F(x,\psi_\mu,0)\psi_\mu e^{-h_2(\psi_\mu)}; ae^{-h_2(\psi_\mu)}],0, \psi_\mu) \in \overline{\mathfrak{C}};
\end{eqnarray*}
\item $\mu = \sigma_1[-\mbox{div}(R(0,0)\nabla);b]$ and $(\sigma_1[-\mbox{div}(P(0,0)\nabla);a],0,0) \in \overline{\mathfrak{C}}.$
\end{enumerate}
\end{Teo}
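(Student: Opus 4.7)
\noindent\textit{Proof plan.} The plan is to deduce Theorem~\ref{Thsistema'} from Theorem~\ref{Thsistema} via the natural involution of system~(\ref{sistema}) which exchanges the two unknowns and the two equations simultaneously. Concretely, I would set $\hat u:=v$, $\hat v:=u$, $\tilde\lambda:=\mu$, $\tilde\mu:=\lambda$, together with the relabeled coefficients
\[
\tilde P(a,b):=R(b,a),\ \tilde S(a,b):=Q(b,a),\ \tilde Q(a,b):=S(b,a),\ \tilde R(a,b):=P(b,a),
\]
and $\tilde a:=b$, $\tilde b:=a$, $\tilde f:=g$, $\tilde g:=f$, $\tilde F(x,a,b):=G(x,b,a)$, $\tilde G(x,a,b):=F(x,b,a)$. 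A direct computation then confirms that $(u,v)$ is a non-negative strong solution of~(\ref{sistema}) at parameters $(\lambda,\mu)$ if and only if $(\hat u,\hat v)$ is a non-negative strong solution of the system of the same form with the tilded data and parameters $(\tilde\lambda,\tilde\mu)$.

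The next step is to verify that the tilded data satisfy hypotheses $(H_{PQRS})$, $(H_{ab})$, $(H_{fg})$, $(H_{FG})$. The compatibility conditions $\tilde Q(u,0)=S(0,u)=0$ and $\tilde S(0,v)=Q(v,0)=0$ follow directly from the corresponding original ones; the determinant condition~(\ref{determinante}) and the positivity bounds~(\ref{PR}) are manifestly symmetric under the swap; and the remaining hypotheses transfer trivially. I would then check that the given non-degenerate semitrivial solution $(\mu,\theta_\mu)$ of~(\ref{stm}) becomes a non-degenerate solution of the analogue of~(\ref{stl}) for the tilded system at parameter $\tilde\lambda=\mu$; this reduces to the identities $\tilde P(z,0)=R(0,z)$ and $\tilde P_u(z,0)=R_v(0,z)$, which make the linearization~(\ref{ndegen}) for the tilded system coincide term by term with the linearization of~(\ref{stm}) at $\theta_\mu$.

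Finally, I would translate the conclusion back. From $\tilde Q_v(z,0)=S_u(0,z)$ and $\tilde R(z,0)=P(0,z)$, the function $\tilde h_1$ built from $\tilde Q,\tilde R$ via formula~(\ref{h1}) equals $h_2$ as defined in~(\ref{h2}); a symmetric computation gives $\tilde h_2=h_1$. Consequently the bifurcation threshold $\tilde\mu_{\tilde\lambda}$ provided by~(\ref{mulambda}) for the tilded system coincides with $\lambda_\mu$ of~(\ref{lambdamu}). Theorem~\ref{Thsistema} therefore produces a continuum of coexistence states of the tilded system bifurcating from $(\tilde\mu_{\tilde\lambda},\theta_\mu,0)$ in $(\tilde\mu,\hat u,\hat v)$-space; applying the involution $(\hat u,\hat v)\mapsto(v,u)$ yields the desired continuum $\mathfrak{C}$ of coexistence states of~(\ref{sistema}) emanating from $(\lambda_\mu,0,\theta_\mu)$, and each of the four alternatives of Theorem~\ref{Thsistema} maps bijectively to the correspondingly numbered alternative of Theorem~\ref{Thsistema'}. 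I expect no genuinely new analytic obstacle to arise beyond those already handled for Theorem~\ref{Thsistema}; the only care point is the meticulous symbol substitution, in particular keeping track of which variable is being differentiated when comparing $Q_v$ with $\tilde Q_v$ and likewise for $S_u$ and $\tilde S_u$.
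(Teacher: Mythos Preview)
Your proposal is correct. The involution you describe is exactly the right device: the relabeled data $(\tilde P,\tilde Q,\tilde R,\tilde S,\tilde a,\tilde b,\tilde f,\tilde g,\tilde F,\tilde G)$ satisfy $(H_{PQRS})$--$(H_{FG})$, the semitrivial problem~(\ref{stm}) becomes~(\ref{stl}) for the tilded system, non-degeneracy transfers because $\tilde P_u(z,0)=R_v(0,z)$, and the identities $\tilde Q_v(z,0)=S_u(0,z)$, $\tilde R(z,0)=P(0,z)$, $\tilde G(x,z,0)=F(x,0,z)$ turn~(\ref{mulambda}) into~(\ref{lambdamu}) and $\tilde h_1$ into $h_2$. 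The four alternatives then correspond one-to-one under the swap, exactly as you say.

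The paper itself does not spell out a separate proof of Theorem~\ref{Thsistema'}; it simply announces, just before the statement, that ``in the same way, we can fix $\mu$ and consider $\lambda$ as a bifurcation parameter'', meaning one repeats verbatim the construction of Sections~\ref{sec2}--\ref{sec4} with the roles of the two equations interchanged. Your approach packages this repetition as a single application of the already-proved Theorem~\ref{Thsistema} to the transformed system, which is cleaner and avoids rewriting the operator-theoretic lemmas; the paper's implicit route, by contrast, would redo those lemmas with $u$ and $v$ swapped. Both arguments have the same content; yours is the more economical presentation. The one point deserving an explicit line in a written-up version is the chain-rule bookkeeping you already flag: $\tilde Q_v(a,b)=\partial_b\,S(b,a)=S_u(b,a)$ (derivative in the \emph{first} slot of $S$), and similarly for $\tilde S_u$.
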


\begin{remark}\label{obs1}
An useful comment regarding the eigenvalues $\mu_\lambda$ and $\lambda_\mu$ in Theorems~\ref{Thsistema} and~\ref{Thsistema'} is necessary. Note that
\begin{equation}\label{au1}
      \mu_\lambda=\sigma_1\left[-\mbox{div}\left(Q_v(\theta_\lambda,0)\nabla \theta_\lambda+R(\theta_\lambda,0) \nabla\right) - G(x,\theta_\lambda,0)\theta_\lambda;b \right]
\end{equation}
or
\begin{equation}\label{au2}
\mu_\lambda=\sigma_1[-\mbox{div}( R(\theta_\lambda,0) e^{-h_1(\theta_\lambda)}\nabla) - G(x,\theta_\lambda,0)\theta_\lambda e^{-h_1(\theta_\lambda)}; be^{-h_1(\theta_\lambda)}],
\end{equation}
 Thanks to (\ref{hipq}) in ($H_{PQRS}$), (\ref{au1})  appears naturally when one linearizes the second equation of (\ref{sistema}) at $(\theta_\lambda,0)$ and we will be used throughout the proof of Theorem~\ref{Thsistema}. On the other hand,  (\ref{au2}) can be more convenient in applications, due to the monotonicity properties of this eigenvalue. The same remark applies to $\lambda_\mu$.
\end{remark}

\begin{remark}\label{obs}
Similar to what happens in Theorem~7.2.2 of \cite{Bifbook} (see Remark~7.2.3), the alternative  3 of Theorem~\ref{Thsistema} (resp. Theorem~\ref{Thsistema'}) cannot occur if (\ref{stl}) (resp. (\ref{stm})) has a unique positive solution and alternative 4 cannot occur if $\lambda \neq \sigma_1[-\mbox{div}(P(0,0)\nabla);a]$ (resp. $\mu \neq \sigma_1[-\mbox{div}(R(0,0)\nabla);b]$). This is a common situation in applications, as we will see in Section~\ref{sec5}.
\end{remark}

The paper is organized as follows. In Section~\ref{sec2}, we will re-write  (\ref{sistema}) as a suitable nonlinear equation to apply the unilateral bifurcation theorems of \cite{Bifbook}. In Section~\ref{sec3}  we will present some auxiliary results that will be useful to prove Theorem \ref{Thsistema}, which will be done in Section~\ref{sec4}. Finally, in Section \ref{sec5} we provide  some applications to systems arising to population dynamics.

\section{Construction of the operator}\label{sec2}
The main goal of this section is to re-write  (\ref{sistema}) as a suitable nonlinear equation to apply the unilateral bifurcation result of \cite{Bifbook}. To this end, we argue as follows.

\begin{remark} 
In \cite{Bifbook} (see Section 7.2) the author defines the operator on a subspace of $\mathcal{C}(\overline{\Omega})$, which is ordered Banach space whose positive cone is normal and has nonempty interior. As we shall see below, we can not do the same, due to the presence of the gradient of the functions $u$ and $v$. For this reason, the space chosen will be $\mathcal{C}_0^1(\overline{\Omega})$, which also is an ordered Banach space whose positive cone has nonempty interior, but it is not normal. However, we can still apply Theorem 12.3 and Corollary 12.4 of \cite{KreinR}.
\end{remark}

First, in view of (\ref{determinante}), we can extend the functions
$$P,Q,R,S:[0,\infty) \rightarrow \R$$
to $\R$ such that $P,Q,R,S \in \mathcal{C}^2(\R)$ and 
\begin{eqnarray}\label{determinante2}
|P(u,v)Q(u,v)-R(u,v)S(u,v)| \geq \widehat{\delta}_0  >0 \quad \forall u,v \in \R.
\end{eqnarray}
Thus, throughout the rest of this paper $P,Q,R,S:\R \rightarrow \R$ are function of class $\mathcal{C}^2$ satisfying ($H_{PQRS}$) and (\ref{determinante2}).

Suppose now that  $(u,v) \in  W_0^{2,p}(\Omega)\times W_0^{2,p}(\Omega)$, for all $p>1$, is a non-negative solution of (\ref{sistema}). Then (\ref{sistema})  is equivalent to
\begin{equation}\label{sistema'}
    \left\{ \begin{array}{ll}
         -P_u(u,v) |\nabla u|^2 -  S_v(u,v) |\nabla v|^2 - (P_v(u,v) + S_u(u,v))\nabla u  \nabla v & \\
         - P(u,v) \Delta u - S(u,v) \Delta v =\lambda a(x)u + f(x,u)u+ F(x,u,v)uv & \mbox{in}~\Omega,\\
         -Q_u(u,v) |\nabla u|^2 -  R_v(u,v) |\nabla v|^2 - (Q_v(u,v) + R_u(u,v))\nabla u  \nabla v & \\
         - Q(u,v) \Delta u - R(u,v) \Delta v =\mu b(x)v + g(x,v)v+ G(x,u,v)uv & \mbox{in}~\Omega,\\
         u=v=0 & \mbox{on}~\partial \Omega.
    \end{array} \right.
\end{equation}
Denoting by simplicity $P=P(u,v),~Q=Q(u,v),~R=R(u,v),~S=S(u,v) $ and the same for its derivatives, we can re-write (\ref{sistema'}) as 
\begin{equation*}
    \left\{ \begin{array}{ll}
         - P \Delta u - S \Delta v = M & \mbox{in}~\Omega,\\
         - Q \Delta u - R \Delta v = N & \mbox{in}~\Omega,\\
         u=v=0 & \mbox{on}~\partial \Omega,
    \end{array} \right.
\end{equation*}
where
$$M:= M(u,v)= P_u |\nabla u|^2 +  S_v |\nabla v|^2 + (P_v + S_u)\nabla u  \nabla v + \lambda a(x) u + f(x,u)u+ F(x,u,v)uv$$
and
$$N:= N(\mu,u,v) = Q_u |\nabla u|^2 +  R_v |\nabla v|^2 + (Q_v + R_u)\nabla u  \nabla v +\mu b(x)v + g(x,v)v+ G(x,u,v)uv.$$
Or in matrix form
\begin{equation}\label{okk}
    -\left[\begin{array}{cc}
        P & S \\
        Q & R 
    \end{array}\right] \left[\begin{array}{c}
        \Delta u \\
        \Delta v  
    \end{array}\right] = \left[\begin{array}{c}
        M \\
        N  
    \end{array}\right].
\end{equation}
By hypothesis (\ref{determinante2}), the matrix
\begin{equation*}
    \left[\begin{array}{cc}
        P(s,t) & S(s,t) \\
        Q(s,t) & R(s,t) 
    \end{array}\right]
\end{equation*}
is invertible for all $s,t \in \R$ and then (\ref{okk}) is equivalent to
\begin{equation*}
    -\left[\begin{array}{c}
        \Delta u \\
        \Delta v  
    \end{array}\right] =\frac{1}{PR-QS}
    \left[\begin{array}{cc}
        R & -S \\
        -Q & P 
    \end{array}\right] \left[\begin{array}{c}
        M \\
        N  
    \end{array}\right] = \left[\begin{array}{c}
        \displaystyle\frac{RM-SN}{PR-QS}  \\
        \noalign{\smallskip}
        \displaystyle\frac{PN-QM}{PR-QS}  
    \end{array}\right].
\end{equation*}
Let $(\lambda, \theta_\lambda)$  be a (positive) solution of  (\ref{stl}). Adding to both sides of the second equation above the following linear term
$$
Z(v):= \frac{-(Q_v(\theta_\lambda,0)+R_u(\theta_\lambda,0)) \nabla \theta_\lambda \nabla v + kv}{R(\theta_\lambda,0)}
$$
where $k>0$ is a constant to be chosen, it follows
\begin{equation*}
    \left[\begin{array}{c}
        -\Delta u \\
        -\Delta v +Z(v) 
    \end{array}\right] = \left[\begin{array}{c}
        \displaystyle\frac{RM-SN}{PR-QS}  \\
        \displaystyle\frac{PN-QM}{PR-QS} + Z(v)  
    \end{array}\right].
\end{equation*}
\begin{remark}
The addition of $Z$ is necessary for that the operator $T_\mu$ defined  in (\ref{T}) to be strongly positive (see Lemma~\ref{propT}). 
\end{remark}
Finally,
\begin{equation*}
    \left[\begin{array}{c}
         u \\
         v 
    \end{array}\right] = \left[\begin{array}{c}
        (-\Delta)^{-1}\left(\displaystyle\frac{RM-SN}{PR-QS}\right)  \\
        (-\Delta + Z)^{-1} \left(\displaystyle\frac{PN-QM}{PR-QS} + Z(v)\right)  
    \end{array}\right].
\end{equation*}
Thus, fixed  $\lambda \in \R$, we define the operator
$$\mathfrak{F}: \R \times \mathcal{C}_0^1(\overline{\Omega})\times \mathcal{C}_0^1(\overline{\Omega}) \longrightarrow  \mathcal{C}_0^1(\overline{\Omega})\times \mathcal{C}_0^1(\overline{\Omega}) $$
given by
\begin{equation}\label{F}
    \mathfrak{F}(\mu,u,v)  = \left[\begin{array}{c}
    u-    (-\Delta)^{-1}\left(\displaystyle\frac{RM-SN}{PR-QS} \right) (\mu,u,v)  \\
    v -   (-\Delta + Z)^{-1} \left(\left(\displaystyle\frac{PN-QM}{PR-QS}\right)(\mu,u,v) + Z(v)\right)  
    \end{array}\right].
\end{equation}
Note that $\mathfrak{F}$ is well defined because, for each $(\mu,u,v) \in \R \times \mathcal{C}_0^1(\overline{\Omega})\times \mathcal{C}_0^1(\overline{\Omega})$, 
$$
P(u,v), Q(u,v), R(u,v), S(u,v), M(u,v),  N(\mu,u,v), Z(v) \in \mathcal{C}(\overline{\Omega}), \quad \forall \lambda \in \R,
$$
and, since the operators
$$(-\Delta)^{-1}, (-\Delta+Z)^{-1}: \mathcal{C}(\overline{\Omega}) \longrightarrow \mathcal{C}_0^1(\overline{\Omega})$$
are well defined, it follows that  $\mathfrak{F}$ is well defined. 
Moreover,  $(u,v) \in W_0^{2,p}(\Omega)\times W_0^{2,p}(\Omega)$ is a non-negative strong solution of (\ref{sistema}) if, and only if,
$$\mathfrak{F}(\mu,u,v) = 0 \quad \mu \in \R.$$
Furthermore,
$$ \mathfrak{F}(\mu,\theta_\lambda,0) = 0  \quad \forall \mu \in \R,$$
consequently $(\mu,\theta_\lambda,0)$ can be regarded as the known curve of the solutions from which we hope the coexistence states will bifurcate.

The next important step to apply the  Theorem 6.4.3 of \cite{Bifbook} is  to calculate   
$$\mathfrak{L}(\mu):=D_{(u,v)} \mathfrak{F}(\mu,\theta_\lambda,0).$$
In the following we will calculate this derivative. By definition of $\mathfrak{F}$ (see (\ref{F})) we have
\begin{multline}\label{Lmu}
\mathfrak{L}(\mu)=D_{(u,v)} \mathfrak{F}(\mu,\theta_\lambda,0) = \cr
\left[\begin{array}{cc}
    I-    (-\Delta)^{-1}\left(\displaystyle\frac{RM-SN}{PR-QS}\right)_u  & -    (-\Delta)^{-1}\left(\displaystyle\frac{RM-SN}{PR-QS}\right)_v\\
     -   (-\Delta + Z)^{-1} \left(\displaystyle\frac{PN-QM}{PR-QS}\right)_u &      I-   (-\Delta + Z)^{-1} \left(\displaystyle\frac{PN-QM}{PR-QS}+Z\right)_v 
    \end{array}\right]    
\end{multline}
where each term is computed in $(\theta_\lambda,0)$ and we have already used that $Z$ does not depend on $u$ and, hence, $Z_u\equiv0$.
Let us compute each term in the above operator. We emphasize again that the functions $P,Q,R,S,M$ and $N$ as well as their derivatives are calculated in $(\theta_\lambda,0)$ and we omit the point by simplicity. Thus,
\begin{eqnarray*}
N&=&N(\mu,\theta_\lambda,0) = Q_u|\nabla \theta_\lambda|^2,\\
N_u\xi &=& N_u(\mu,\theta_\lambda,0)\xi= Q_{uu} \xi |\nabla\theta_\lambda|^2 + 2 Q_u \nabla \theta_\lambda \nabla \xi, \\
N_v\eta&=&N_v(\mu,\theta_\lambda,0)\eta= Q_{vu} |\nabla\theta_\lambda|^2 \eta + (Q_v+R_u)\nabla \theta_\lambda \nabla\eta + \mu b(x)\eta + G(x,\theta_\lambda,0)\theta_\lambda\eta, \\
M&=&M(\theta_\lambda,0) = P_u |\nabla \theta_\lambda|^2 + \lambda a(x)\theta_\lambda + f(x,\theta_\lambda)\theta_\lambda = - P \Delta \theta_\lambda, \\
M_u\xi&=& M_u(\theta_\lambda,0)\xi=P_{uu} |\nabla \theta_\lambda|^2 \xi + 2 P_u \nabla \theta_\lambda \nabla \xi + \lambda a(x)\xi + f(x,\theta_\lambda)\xi+f_u(x,\theta_\lambda)\theta_\lambda\xi,\\
M_v\eta&=&M_v(\theta_\lambda,0)\eta = P_{vu}|\nabla \theta_\lambda|^2\eta+ (P_v+S_u) \nabla \theta_\lambda \nabla\eta + F(x,\theta_\lambda,0) \theta_\lambda \eta.
\end{eqnarray*}
By hypothesis ($H_{PQRS}$), $Q(s,0) = 0$ for all  $s \geq 0$, which implies
\begin{equation}\label{Nu}
    Q(\theta_\lambda,0)=Q_u(\theta_\lambda,0)=Q_{uu}(\theta_\lambda,0)=N(\mu,\theta_\lambda,0)=N_u(\mu,\theta_\lambda,0)=0.
\end{equation}
Thus,
$$\left(\frac{RM - SN}{PR-QS}\right)_u(\mu,\theta_\lambda,0) = \left(\frac{M_uP-MP_u}{P^2}\right)(\theta_\lambda,0).
$$
Substituting the terms $M_u(\theta_\lambda,0), P(\theta_\lambda,0)$ and $P_u(\theta_\lambda,0)$, by a direct calculation we obtain
\begin{multline*}
    \left(\frac{RM - SN}{PR-QS}\right)_u(\mu,\theta_\lambda,0)u = P(\theta_\lambda,0)^{-1}\left\{\mbox{div}(P_u(\theta_\lambda,0) u \nabla \theta_\lambda)+ P_u(\theta_\lambda,0) \nabla \theta_\lambda \nabla u \right. \cr \left. +\lambda a(x)u +[\theta_\lambda f_u(x,\theta_\lambda)+f(x,\theta_\lambda)] u \right\}.
\end{multline*}
Define the operator $T_{1,\lambda}: \mathcal{C}_0^1(\overline{\Omega}) \longrightarrow \mathcal{C}_0^1(\overline{\Omega})$ given by
\begin{multline}\label{T1lambda}
         T_{1,\lambda}u:=u- (-\Delta)^{-1}\left[P(\theta_\lambda,0)^{-1}\left\{\mbox{div}(P_u(\theta_\lambda,0) u \nabla \theta_\lambda)+ P_u(\theta_\lambda,0) \nabla \theta_\lambda \nabla u \right.\right.  \cr \left.\left. +\lambda a(x)u +[\theta_\lambda f_u(x,\theta_\lambda)+f(x,\theta_\lambda)] u \right\}\right], 
\end{multline}
then $T_{1,\lambda}$ is well defined and by above discussion
\begin{equation}\label{DF1}
    I-    (-\Delta)^{-1}\left(\displaystyle\frac{RM-SN}{PR-QS}\right)_u(\theta_\lambda,0) = T_{1,\lambda}.
\end{equation}
Similarly, it follows from (\ref{Nu}) that
\begin{equation*}
    \left(\displaystyle\frac{PN-QM}{PR-QS}\right)_u (\mu,\theta_\lambda,0)=0
\end{equation*}
and, hence, 
\begin{equation}\label{DF2}
(-\Delta + Z)^{-1}\left(\displaystyle\frac{PN-QM}{PR-QS}\right)_u (\mu,\theta_\lambda,0) \equiv 0.
\end{equation}

On the other hand, again by (\ref{Nu}) and using that $Z$ is linear in $v$ and therefore $Z_v=Z$, we deduce that
\begin{eqnarray*}
\left(\displaystyle\frac{PN-QM}{PR-QS} + Z\right)_v(\mu,\theta_\lambda,0) &=& \frac{PN_v - Q_vM}{PR}(\mu,\theta_\lambda,0) + Z(\theta_\lambda)\\
&=& \frac{N_v+ Q_v \Delta \theta_\lambda}{R}(\mu,\theta_\lambda,0) + Z(\theta_\lambda).
\end{eqnarray*}
Substituting $Z(\theta_\lambda)$ and $N_v(\mu,\theta_\lambda,0)$, by a direct calculation we obtain
\begin{multline}\label{bounfra}
    \left(\displaystyle\frac{PN-QM}{PR-QS}+Z\right)_v (\mu,\theta_\lambda,0) = \cr
     \frac{Q_{vu}(\theta_\lambda,0)|\nabla \theta_\lambda|^2 + \mu b(x) + G(x,\theta_\lambda,0)\theta_\lambda + Q_v(\theta_\lambda,0) \Delta \theta_\lambda + k}{R(\theta_\lambda,0)}.
\end{multline} 
Define now the operator $T_\mu: \mathcal{C}_0^1(\overline{\Omega}) \longrightarrow \mathcal{C}_0^1(\overline{\Omega}), ~ \mu \in \R$, given by
\begin{equation}\label{T}
    T_\mu = (-\Delta + Z)^{-1}\left[\frac{Q_{vu}(\theta_\lambda,0)|\nabla \theta_\lambda|^2 + \mu b(x) + G(x,\theta_\lambda,0)\theta_\lambda + Q_v(\theta_\lambda,0) \Delta \theta_\lambda + k}{R(\theta_\lambda,0)}\right],
\end{equation}
then $T_\mu$ is well defined and by above discussion
\begin{equation}\label{DF3}
    (-\Delta + Z)^{-1} \left(\displaystyle\frac{PN-QM}{PR-QS}+Z\right)_v (\mu,\theta_\lambda,0) = T_\mu. 
\end{equation}

Finally, we will denote
\begin{equation}\label{DF4}
    \widehat{T} :=    (-\Delta)^{-1} \left(\displaystyle\frac{RM-SN}{PR-QS}\right)_v (\mu,\theta_\lambda,0).
\end{equation}
By (\ref{DF1}), (\ref{DF2}), (\ref{DF3}) and (\ref{DF4}), we conclude that
$$
\mathfrak{L}(\mu) = \left[\begin{array}{cc}
    T_{1,\lambda} & -\widehat{T} \\
    0 & I-T_{\mu_\lambda} 
\end{array}\right].
$$

\section{Auxiliary Results}\label{sec3}
In this section we will prove some useful properties of the operators $T_{1,\lambda}$ and $T_\mu$ defined in previous section and use them to prove some important results of $\mathfrak{L}(\mu)$.

The first lemma connects the concept of nondegenerate solution of (\ref{stl}) with the invertibility of $T_{1,\lambda}$.

\begin{Lema}\label{lemaT1l}
A solution $(\lambda, \theta_\lambda)$ of (\ref{stl}) is nondegenerate if, and only if,  $T_{1,\lambda}$ is invertible.
\end{Lema}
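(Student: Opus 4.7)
The plan is to view $T_{1,\lambda}$ as a compact perturbation of the identity on $\mathcal{C}_0^1(\overline{\Omega})$, invoke the Fredholm alternative to reduce invertibility to injectivity, and then show by direct computation that the kernel of $T_{1,\lambda}$ coincides (as a set) with the space of strong solutions of (\ref{ndegen}). Under this correspondence, nondegeneracy of $(\lambda,\theta_\lambda)$ translates exactly into $\ker T_{1,\lambda}=\{0\}$.

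Concretely, write $T_{1,\lambda}=I-K_\lambda$ with
$$K_\lambda u := (-\Delta)^{-1}\left[P(\theta_\lambda,0)^{-1}\bigl\{\mbox{div}(P_u(\theta_\lambda,0)u\nabla\theta_\lambda)+P_u(\theta_\lambda,0)\nabla\theta_\lambda\cdot\nabla u+\lambda a(x)u+[\theta_\lambda f_u(x,\theta_\lambda)+f(x,\theta_\lambda)]u\bigr\}\right].$$
Expanding the divergence gives a sum of terms involving $u$, $\nabla u$, $\Delta\theta_\lambda$ and $|\nabla\theta_\lambda|^2$; since $\theta_\lambda\in W_0^{2,p}(\Omega)$ for all $p$ and $P,f$ are of class $\mathcal{C}^2$ and $\mathcal{C}^1$ respectively, the bracket lies in $L^p(\Omega)$ for every $p<\infty$ whenever $u\in\mathcal{C}_0^1(\overline{\Omega})$. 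Then $(-\Delta)^{-1}$ sends $L^p(\Omega)$ into $W^{2,p}_0(\Omega)$, which for $p>N$ embeds compactly into $\mathcal{C}_0^1(\overline{\Omega})$. Thus $K_\lambda$ is compact and $T_{1,\lambda}$ is a Fredholm operator of index zero; by the Fredholm alternative, $T_{1,\lambda}$ is invertible if and only if $\ker T_{1,\lambda}=\{0\}$.

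It remains to identify the kernel. If $T_{1,\lambda}u=0$, applying $-\Delta$ yields
$$-P(\theta_\lambda,0)\Delta u=\mbox{div}(P_u(\theta_\lambda,0)u\nabla\theta_\lambda)+P_u(\theta_\lambda,0)\nabla\theta_\lambda\cdot\nabla u+\lambda a(x)u+[\theta_\lambda f_u(x,\theta_\lambda)+f(x,\theta_\lambda)]u,$$
and bootstrapping gives $u\in W^{2,p}_0(\Omega)$ for all $p$. Using the identity $\mbox{div}(P(\theta_\lambda,0)\nabla u)=P_u(\theta_\lambda,0)\nabla\theta_\lambda\cdot\nabla u+P(\theta_\lambda,0)\Delta u$ to eliminate $\Delta u$, the mixed gradient terms cancel and the equation collapses to
$$-\mbox{div}(P_u(\theta_\lambda,0)u\nabla\theta_\lambda+P(\theta_\lambda,0)\nabla u)=\lambda a(x)u+[\theta_\lambda f_u(x,\theta_\lambda)+f(x,\theta_\lambda)]u,$$
which is precisely (\ref{ndegen}). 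The converse implication is immediate by running the same algebra backwards. Consequently $\ker T_{1,\lambda}$ equals the set of strong solutions of (\ref{ndegen}), and by Definition~\ref{degenerada} this is $\{0\}$ iff $(\lambda,\theta_\lambda)$ is nondegenerate, proving the equivalence.

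The only delicate point is the compactness of $K_\lambda$, since $K_\lambda$ sees the gradient of its argument; the key observation is that the smoothing effect of $(-\Delta)^{-1}$ beats this loss of one derivative thanks to the $W^{2,p}\hookrightarrow\mathcal{C}^1$ embedding. Everything else is routine algebraic manipulation of the linearization.
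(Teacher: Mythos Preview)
Your proof is correct and follows essentially the same route as the paper's: both reduce invertibility of $T_{1,\lambda}$ to triviality of its kernel via the Fredholm alternative (since $T_{1,\lambda}$ is a compact perturbation of the identity), and then identify $\ker T_{1,\lambda}$ with the solution set of (\ref{ndegen}). Your version simply spells out in more detail the compactness of $K_\lambda$ and the algebraic manipulation showing that $T_{1,\lambda}u=0$ is equivalent to (\ref{ndegen}), which the paper asserts without elaboration.
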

\begin{proof}
Recall that by Definition~\ref{degenerada},  $(\lambda,\theta_\lambda)$ is nondegenerate solution of (\ref{stl}) is zero is the unique strong solution of (\ref{ndegen}). On the other hand, it is equivalent to
\begin{multline*}
         T_{1,\lambda}u=u- (-\Delta)^{-1}\left[P(\theta_\lambda,0)^{-1}\left\{\mbox{div}(P_u(\theta_\lambda,0) u \nabla \theta_\lambda)+ P_u(\theta_\lambda,0) \nabla \theta_\lambda \nabla u \right.\right.  \cr \left.\left. +\lambda a(x)u +[\theta_\lambda f_u(x,\theta_\lambda)+f(x,\theta_\lambda)] u \right\}\right]=0, \quad (u \in \mathcal{C}_0^1(\overline{\Omega})).
\end{multline*}
That is,  $(\lambda,\theta_\lambda)$ is a nondegenerate solution of (\ref{stl}) if, and only if, $N[T_{1,\lambda}]=\{0\}$. Since $T_{1,\lambda}$ is a linear operator which is a compact perturbation of the identity map, it follows that it is equivalent to say that $T_{1,\lambda}$ is invertible.
\end{proof}

Now, we will prove some properties of the operator $T_\mu$.

\begin{Lema}\label{propT}
For $k>0$ large enough, the operator $T_{\mu_\lambda}$ (see  (\ref{T})) where  
$$\mu=\mu_\lambda=\sigma_1\left[-\mbox{div}\left(Q_v(\theta_\lambda,0)\nabla \theta_\lambda+R(\theta_\lambda,0) \nabla\right) - G(x,\theta_\lambda,0)\theta_\lambda;b \right],$$
is strongly positive and satisfies
$$r(T_{\mu_\lambda})=1,$$
where $r(T_{\mu_\lambda})$ denotes the spectral radius of $T_{\mu_\lambda}$.
\end{Lema}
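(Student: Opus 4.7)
I would split the claim into two parts: (i) strong positivity of $T_{\mu_\lambda}$ for $k$ large, (ii) the identification $r(T_{\mu_\lambda}) = 1$ via the Krein--Rutman theorem applied to an explicit positive eigenfunction.

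For part (i), the first step is to justify that the operator $-\Delta + Z$ is invertible with strongly positive inverse on $\mathcal{C}_0^1(\overline{\Omega})$. Writing $Z$ explicitly, $-\Delta + Z$ is a uniformly elliptic linear operator in non-divergence form with a smooth drift (coming from $\nabla \theta_\lambda$, which is $C^{1,\alpha}$ by elliptic regularity applied to \eqref{stl}) and with strictly positive zero-order coefficient $k/R(\theta_\lambda,0)$. Hence the strong maximum principle and Hopf's lemma yield that $(-\Delta + Z)^{-1}\colon \mathcal{C}(\overline{\Omega}) \to \mathcal{C}_0^1(\overline{\Omega})$ is well defined, compact, and strongly positive. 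Next, because $\theta_\lambda \in W^{2,p}(\Omega)$ for all $p$, the expression
\[
m(x) := \frac{Q_{vu}(\theta_\lambda,0)|\nabla\theta_\lambda|^2 + \mu_\lambda b(x) + G(x,\theta_\lambda,0)\theta_\lambda + Q_v(\theta_\lambda,0)\Delta\theta_\lambda + k}{R(\theta_\lambda,0)}
\]
is an $L^\infty$ function that becomes strictly positive once $k$ is chosen larger than the $L^\infty$ norm of the sum of the other terms. Combining the strong positivity of $(-\Delta + Z)^{-1}$ with $m > 0$, the operator $T_{\mu_\lambda} w = (-\Delta + Z)^{-1}(m\, w)$ maps $\mathcal{P}\setminus\{0\}$ into $\mathrm{int}(\mathcal{P})$, i.e. it is strongly positive.

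For part (ii), the strategy is to produce a positive eigenfunction of $T_{\mu_\lambda}$ with eigenvalue $1$ and then invoke Krein--Rutman (Theorem 12.3 and Corollary 12.4 of \cite{KreinR}), which for strongly positive compact operators on an ordered Banach space with a cone having nonempty interior forces the spectral radius to coincide with that unique positive eigenvalue. Let $\varphi \in \mathrm{int}(\mathcal{P})$ be the principal eigenfunction associated to $\mu_\lambda$ in \eqref{mulambda}, so that
\[
-\mathrm{div}\bigl(Q_v(\theta_\lambda,0)\varphi\,\nabla\theta_\lambda + R(\theta_\lambda,0)\nabla\varphi\bigr) - G(x,\theta_\lambda,0)\theta_\lambda\,\varphi = \mu_\lambda\, b(x)\,\varphi.
\]
Expanding the divergence using $\nabla [Q_v(\theta_\lambda,0)] = Q_{vu}(\theta_\lambda,0)\nabla\theta_\lambda$ and $\nabla [R(\theta_\lambda,0)] = R_u(\theta_\lambda,0)\nabla\theta_\lambda$, dividing by $R(\theta_\lambda,0)$, and adding $k\varphi/R(\theta_\lambda,0)$ to both sides, the left-hand side becomes exactly $(-\Delta + Z)\varphi$ and the right-hand side becomes $m(x)\,\varphi$. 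Applying $(-\Delta + Z)^{-1}$ gives $\varphi = T_{\mu_\lambda}\varphi$, so $1$ is an eigenvalue of $T_{\mu_\lambda}$ with eigenfunction in $\mathrm{int}(\mathcal{P})$. Krein--Rutman then concludes $r(T_{\mu_\lambda}) = 1$.

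The main obstacle, and the reason the auxiliary term $Z$ was introduced in Section~\ref{sec2}, is precisely step (i): without the extra $+k v$ and the compensating drift, the multiplication function $m(x)$ may fail to be strictly positive and the resolvent $(-\Delta + Z)^{-1}$ may fail to be strongly positive, so neither ingredient required by Krein--Rutman would be available. Once $k$ is chosen large enough to guarantee both ingredients, the algebraic identification of $1$ as a spectral value is a short computation, as sketched above, and the rest follows from the standard Krein--Rutman machinery in the cone of $\mathcal{C}_0^1(\overline{\Omega})$.
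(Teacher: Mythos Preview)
Your proposal is correct and follows essentially the same approach as the paper: choose $k$ large so that the multiplier $m$ is positive, use the strong maximum principle for $-\Delta+Z$ to get strong positivity of $T_{\mu_\lambda}$, verify by direct expansion that the principal eigenfunction $\varphi$ of \eqref{mulambda} satisfies $T_{\mu_\lambda}\varphi=\varphi$, and conclude $r(T_{\mu_\lambda})=1$ via Theorem~12.3 of \cite{KreinR}. The only cosmetic differences are that the paper phrases the maximum principle step as ``positive constants are strict supersolutions of $-\Delta+Z$'' rather than invoking the positive zero-order coefficient directly, and it only requires $m\geq 0$ rather than $m>0$; your slightly stronger choice of $k$ is harmless.
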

\begin{proof}
Choose $k>0$ sufficiently large such that
$$\frac{Q_{vu}(\theta_\lambda,0)|\nabla \theta_\lambda|^2 + \mu_\lambda b(x)+ G(x,\theta_\lambda,0)\theta_\lambda + Q_v(\theta_\lambda,0) \Delta \theta_\lambda + k}{R(\theta_\lambda,0)}\geq 0.$$
Then, for $v> 0$, $y = T_{\mu_\lambda}v$  satisfies
\begin{eqnarray*}
-\Delta y +Z(y) &=& \frac{Q_{vu}(\theta_\lambda,0)|\nabla \theta_\lambda|^2v + \mu_\lambda b(x)v + G(x,\theta_\lambda,0)\theta_\lambda v + Q_v(\theta_\lambda,0) v\Delta \theta_\lambda + kv}{R(\theta_\lambda,0)}\\
&\geq& 0.
\end{eqnarray*}
Since positive constants are strict supersolution of  $-\Delta + Z$ in $\Omega$ under homogeneous Dirichlet boundary conditions, then it satisfies the Strong Maximum Principle. Consequently, $y \in \mbox{int}({\cal P})$, showing that $T_{\mu_\lambda}$ is strongly positive.

To establish that $r(T_{\mu_\lambda})=1$, we argue as follows. Let $\varphi_2$ be the positive eigenfunction associated to 
$$\mu_\lambda=\sigma_1\left[-\mbox{div}\left(Q_v(\theta_\lambda,0)\nabla \theta_\lambda+R(\theta_\lambda,0) \nabla\right) - G(x,\theta_\lambda,0)\theta_\lambda;b \right]$$
with $\|\varphi_2\|_0=1$.
We claim that 
$$T_{\mu_\lambda}\varphi_2 = \varphi_2.$$
Indeed, this equality is equivalent to
\begin{multline*}
    -\Delta \varphi_2 +Z(\varphi_2) =\cr \frac{Q_{vu}(\theta_\lambda,0)|\nabla \theta_\lambda|^2\varphi_2 + \mu_\lambda b(x) \varphi_2 + G(x,\theta_\lambda,0)\theta_\lambda \varphi_2 + Q_v(\theta_\lambda,0) \varphi_2\Delta \theta_\lambda + k\varphi_2}{R(\theta_\lambda,0)}.
\end{multline*}
Substituting $Z(\varphi_2)$, by a direct calculation, we find that
\begin{equation}
    -\mbox{div}\left(Q_v(\theta_\lambda,0)\varphi_2\nabla \theta_\lambda+R(\theta_\lambda,0) \nabla\varphi_2\right) - G(x,\theta_\lambda,0)\theta_\lambda\varphi_2 = \mu_\lambda b(x)\varphi_2
\end{equation}
which holds true by definition of $\varphi_2$. Thus, $1$ is a eigenvalue of $T_{\mu_\lambda}$ with an associated positive eigenfunction. Since $T_{\mu_\lambda}$ is strong positive and $\mathcal{C}_0^1(\overline{\Omega})$ has positive cone with nonempty interior, by Theorem 12.3 of \cite{KreinR}, we conclude that $r(T_{\mu_\lambda})=1$.
\end{proof}

The next corollary  will be very useful later.

\begin{Cor}\label{Tsol}
Let $y \in \mathcal{C}_0^1(\overline{\Omega})\setminus\{0\}$ with $y\geq0$. Then the equation
$$v-T_{\mu_\lambda}v = y$$
has no solution $v \in \mathcal{C}_0^1(\overline{\Omega})$.
\end{Cor}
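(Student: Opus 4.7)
The plan is to argue by contradiction via a duality/orthogonality identity. Suppose, for the sake of contradiction, that some $v \in \mathcal{C}_0^1(\overline{\Omega})$ solves $v - T_{\mu_\lambda} v = y$. I will produce an identity of the form $\int_\Omega \Phi(x)\,y(x)\,dx = 0$ with $\Phi$ strictly positive on $\overline{\Omega}$, which is incompatible with $y \geq 0$, $y \not\equiv 0$. The weight $\Phi$ will arise from the positive principal eigenfunction $\varphi_2$ of Lemma~\ref{propT}, after applying the conjugation that turns the relevant differential operator into its self-adjoint (symmetric) form, as in the derivation of (\ref{av}).

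First I would translate the abstract identity into a PDE. Setting $w := T_{\mu_\lambda} v = v - y$, the definition (\ref{T}) yields $(-\Delta + Z)w = Cv = C(w+y)$, where
\begin{equation*}
C(x) := \frac{Q_{vu}(\theta_\lambda,0)|\nabla\theta_\lambda|^2 + \mu_\lambda b(x) + G(x,\theta_\lambda,0)\theta_\lambda + Q_v(\theta_\lambda,0)\Delta\theta_\lambda + k}{R(\theta_\lambda,0)}.
\end{equation*}
Multiplying by $R(\theta_\lambda,0)$ and regrouping the first- and zero-order terms exactly as in the verification of $T_{\mu_\lambda}\varphi_2 = \varphi_2$ at the end of Lemma~\ref{propT}, this is equivalent to
\begin{equation*}
L w := -\mbox{div}\bigl(R(\theta_\lambda,0)\nabla w + Q_v(\theta_\lambda,0)\,w\,\nabla\theta_\lambda\bigr) - G(x,\theta_\lambda,0)\theta_\lambda\, w - \mu_\lambda\, b(x)\, w = R(\theta_\lambda,0)\,C(x)\,y(x),
\end{equation*}
and standard elliptic regularity applied to $w = (-\Delta + Z)^{-1}(Cv)$ gives $w \in W_0^{2,p}(\Omega)$ for every $p > 1$, so this identity holds in the strong sense.

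Second, I would exploit the conjugation that precedes (\ref{av}). Setting $\tilde w := w\, e^{h_1(\theta_\lambda)}$ and $z_0 := \varphi_2\, e^{h_1(\theta_\lambda)}$, a direct calculation (the reverse of the one performed just before (\ref{av})) shows that $Lw$ coincides with $\tilde L \tilde w$, where
\begin{equation*}
\tilde L \tilde w := -\mbox{div}\bigl(R(\theta_\lambda,0)\, e^{-h_1(\theta_\lambda)}\nabla\tilde w\bigr) - G(x,\theta_\lambda,0)\theta_\lambda\, e^{-h_1(\theta_\lambda)}\,\tilde w - \mu_\lambda\, b(x)\, e^{-h_1(\theta_\lambda)}\,\tilde w.
\end{equation*}
By Lemma~\ref{propT} and (\ref{av}), $z_0$ is a positive principal eigenfunction of $\tilde L$ with eigenvalue $0$. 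Since $\tilde L$ is symmetric in $L^2(\Omega)$ and both $\tilde w$ and $z_0$ lie in $W^{2,p}(\Omega)\cap W_0^{1,p}(\Omega)$, Green's identity gives
\begin{equation*}
0 = \int_\Omega \tilde w\, \tilde L z_0\, dx = \int_\Omega z_0\, \tilde L \tilde w\, dx = \int_\Omega \varphi_2\, e^{h_1(\theta_\lambda)}\, R(\theta_\lambda,0)\, C(x)\, y(x)\, dx.
\end{equation*}

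The delicate final point is to ensure the integrand above is strictly positive on $\overline{\Omega}$. The factors $\varphi_2$, $e^{h_1(\theta_\lambda)}$ and $R(\theta_\lambda,0)$ are strictly positive on $\overline{\Omega}$ by Lemma~\ref{propT} and (\ref{PR}); the only subtle factor is $C$, which the proof of Lemma~\ref{propT} guarantees only to be non-negative. However, $k$ enters $C$ additively in the numerator, so enlarging $k$ beyond the threshold fixed in Lemma~\ref{propT} makes $C$ strictly positive on $\overline{\Omega}$, without altering either the strong positivity of $T_{\mu_\lambda}$ or the identity $T_{\mu_\lambda}\varphi_2 = \varphi_2$ used to conclude $r(T_{\mu_\lambda}) = 1$. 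With such a choice of $k$, the integrand is non-negative on $\overline{\Omega}$ and strictly positive wherever $y > 0$; since $y \geq 0$ with $y \not\equiv 0$, the integral is then strictly positive, contradicting the previous display. The main obstacle is this coordinated use of the conjugation (\ref{av}) together with the freedom in the choice of $k$; once in place, everything reduces to the symmetry of $\tilde L$ and a single integration by parts.
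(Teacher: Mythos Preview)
Your argument is correct, but it takes a markedly different route from the paper's. The paper dispatches the corollary in one line by invoking an abstract Krein--Rutman consequence (Corollary~12.4 of \cite{KreinR}): for a compact strongly positive operator $T$ with $r(T)=1$ on an ordered Banach space whose cone has nonempty interior, the equation $(I-T)v=y$ is unsolvable for any $y\geq 0$, $y\neq 0$. No PDE computation is needed beyond what Lemma~\ref{propT} already established.

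Your approach instead makes this Fredholm--alternative obstruction explicit at the PDE level: you unwind $T_{\mu_\lambda}$ into the operator $L$, then use the conjugation $w\mapsto we^{h_1(\theta_\lambda)}$ from (\ref{av}) to pass to the \emph{symmetric} operator $\tilde L$, so that the positive principal eigenfunction $z_0=\varphi_2 e^{h_1(\theta_\lambda)}$ serves simultaneously as an eigenfunction of the adjoint. The orthogonality $\int z_0\,\tilde L\tilde w=\int \tilde w\,\tilde L z_0=0$ then plays the role that the dual eigenvector plays in the abstract proof. This is more hands-on and avoids citing \cite{KreinR}, at the cost of the explicit computation; it also makes transparent \emph{why} the result holds in this concrete setting, and it does not actually rely on the strong positivity of $T_{\mu_\lambda}$, only on the positivity of $\varphi_2$ and the self-adjointness after conjugation. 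One small slip: $\varphi_2$ is not strictly positive on $\overline{\Omega}$ (it vanishes on $\partial\Omega$), but this is harmless since $y\in\mathcal{C}_0^1(\overline{\Omega})$ forces $\{y>0\}\subset\Omega$, where $\varphi_2>0$, so your integrand is still non-negative and strictly positive on a set of positive measure.
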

\begin{proof}
Since $T_{\mu_\lambda}$ is strongly positive with $r(T_{\mu_\lambda})=1$ and $\mathcal{C}_0^1(\overline{\Omega})$ has positive cone with nonempty interior, the result follows from  Corollary 12.4 of \cite{KreinR}.
\end{proof}

\begin{remark}
From now on we shall assume that $(\lambda,\theta_\lambda)$ is a  nondegenerate strong solution of (\ref{stl}) with $\theta_\lambda \in \mbox{int}({\cal P})$ and $k>0$ fixed such that the statements of Lemma~\ref{propT} holds true.
\end{remark}

The next step is to find a 1-transversal eigenvalues of $\mathfrak{L}(\mu)$ (in the sense of \cite{Bifbook}).
As the natural candidate is $\mu=\mu_\lambda$, let us determine the range and the kernel of $\mathfrak{L}(\mu_\lambda)$. 

\begin{Pro}\label{conda}
\begin{enumerate}
    \item[(a)] $$N[\mathfrak{L}(\mu_\lambda)] = \mbox{span} \langle(T_{1,\lambda}^{-1}(\widehat{T}\varphi_2),\varphi_2) \rangle$$
where $\varphi_2$ stands for the positive eigenfunction associated to $\mu_\lambda$ with $\|\varphi_2\|_0=1$.
    \item[(b)] $$R[\mathfrak{L}(\mu_\lambda)] = \mathcal{C}_0^1(\overline{\Omega}) \times R[I - T_{\mu_\lambda}]$$
where $T_{\mu_\lambda}$ denotes the operator defined by (\ref{T}).
\end{enumerate}
\end{Pro}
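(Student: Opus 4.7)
The plan is to exploit the block upper triangular form
$$
\mathfrak{L}(\mu_\lambda) = \begin{pmatrix} T_{1,\lambda} & -\widehat{T} \\ 0 & I - T_{\mu_\lambda} \end{pmatrix},
$$
together with the two facts we already have in hand: by Lemma~\ref{lemaT1l} the operator $T_{1,\lambda}$ is invertible (because $(\lambda,\theta_\lambda)$ is assumed nondegenerate), and by Lemma~\ref{propT} the operator $T_{\mu_\lambda}$ is strongly positive and compact with spectral radius $r(T_{\mu_\lambda})=1$.

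For part (a), I would read off the system: $(u,v)\in N[\mathfrak{L}(\mu_\lambda)]$ if and only if $(I-T_{\mu_\lambda})v=0$ and $T_{1,\lambda}u = \widehat{T}v$. The second equation, together with invertibility of $T_{1,\lambda}$, forces $u = T_{1,\lambda}^{-1}(\widehat{T}v)$ and so $u$ is completely determined by $v$. For the first equation, the key input is that $1$ is a \emph{simple} eigenvalue of $T_{\mu_\lambda}$: this is the standard consequence of the Krein--Rutman theorem applied to a strongly positive compact operator on the ordered Banach space $\mathcal{C}_0^1(\overline{\Omega})$ (whose positive cone has nonempty interior), exactly in the form of Theorem~12.3 of \cite{KreinR} already cited in the proof of Lemma~\ref{propT}. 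Hence $N[I-T_{\mu_\lambda}] = \mathrm{span}\langle\varphi_2\rangle$, and the description of $N[\mathfrak{L}(\mu_\lambda)]$ follows.

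For part (b), given $(y_1,y_2)\in \mathcal{C}_0^1(\overline{\Omega})\times \mathcal{C}_0^1(\overline{\Omega})$, the system $\mathfrak{L}(\mu_\lambda)(u,v)=(y_1,y_2)$ amounts to $(I-T_{\mu_\lambda})v=y_2$ and $T_{1,\lambda}u = y_1 + \widehat{T}v$. Since $T_{1,\lambda}$ is surjective, the first equation is solvable in $u$ for \emph{any} $v\in\mathcal{C}_0^1(\overline{\Omega})$ and any $y_1\in\mathcal{C}_0^1(\overline{\Omega})$, so the only constraint on $(y_1,y_2)$ is that $y_2\in R[I-T_{\mu_\lambda}]$. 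Conversely, any $(y_1,y_2)$ of this form is attained. This yields the product decomposition claimed.

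The only nontrivial ingredient is the simplicity of the eigenvalue $1$ of $T_{\mu_\lambda}$, and I expect this to be the main point to cite carefully; everything else reduces to a two-line calculation with the block form and the invertibility of $T_{1,\lambda}$.
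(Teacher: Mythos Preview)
Your proposal is correct and follows essentially the same route as the paper: both arguments read off the kernel and range from the block upper triangular form of $\mathfrak{L}(\mu_\lambda)$, using the invertibility of $T_{1,\lambda}$ (Lemma~\ref{lemaT1l}) for the first row and the identification $N[I-T_{\mu_\lambda}]=\mathrm{span}\langle\varphi_2\rangle$ for the second. If anything, you are slightly more explicit than the paper in invoking the Krein--Rutman simplicity of the eigenvalue $1$ (Theorem~12.3 of \cite{KreinR}), which the paper uses tacitly when asserting that the solutions of $T_{\mu_\lambda}\eta=\eta$ are exactly the multiples of $\varphi_2$.
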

\begin{proof}
To prove (a), observe that $(\xi, \eta) \in N[\mathfrak{L}(\mu_\lambda)]$ is equivalent to
\begin{equation*}
    \mathfrak{L}(\mu_\lambda) (\xi,\eta)^t = (0,0)^t 
\end{equation*}
\begin{equation}\label{AAA}
    \left[\begin{array}{cc}
    T_{1,\lambda} & -\widehat{T} \\
    0 & I-T_{\mu_\lambda} 
\end{array}\right]  
\left[\begin{array}{c}
    \xi \\
    \eta  
\end{array}\right] = \left[\begin{array}{c}
    0 \\
    0  
\end{array}\right].
\end{equation}
It follows from  second equation of (\ref{AAA}) that
$$T_{\mu_\lambda}\eta = \eta.$$
By Lemma~\ref{propT} we have $r(T_{\mu_\lambda})=1$. Therefore, the above equality is an eigenvalue problem whose solutions are  $c\varphi_2$,  $c \in \R$. Consequently,
$$ \eta \in \mbox{span}\langle\varphi_2\rangle.$$
On the other hand, the first equation of (\ref{AAA}) with $\eta = \varphi_2$ yields
$$T_{1,\lambda} \xi = \widehat{T}\varphi_2.$$
Once that $T_{1,\lambda}$ is invertible (according with Lemma~\ref{lemaT1l}), we obtain
$$\xi = T_{1,\lambda}^{-1}(\widehat{T}\varphi_2).$$
Thus,
$$N[\mathfrak{L}(\mu_\lambda)] = \mbox{span} \langle(T_{1,\lambda}^{-1}(\widehat{T}\varphi_2),\varphi_2)\rangle.$$

Now let us prove (b). $(\xi, \eta) \in R[\mathfrak{L}(\mu_\lambda)]$ is equivalent to say that there exists $(u,v) \in \mathcal{C}_0^1(\overline{\Omega}) \times \mathcal{C}_0^1(\overline{\Omega})$ such that
$$
\mathfrak{L}(\mu_\lambda)(u,v)^t = (\xi, \eta)^t
$$
or equivalently, 
\begin{equation}\label{BBB}
    \left[\begin{array}{cc}
    T_{1,\lambda} & -\widehat{T} \\
    0 & I-T_{\mu_\lambda} 
\end{array}\right]  
\left[\begin{array}{c}
    u \\
    v  
\end{array}\right] = \left[\begin{array}{c}
    \xi \\
    \eta 
\end{array}\right].
\end{equation}
It follows from second equation of (\ref{BBB}) that
$$\eta \in R[I-T_{\mu_\lambda}].$$
While the first one gives us
\begin{equation}\label{1125}
T_{1,\lambda}u = \widehat{T}v + \xi .
\end{equation}
Recall that $\widehat{T}$ is operator $\mathcal{C}_0^1(\overline{\Omega})$ into $\mathcal{C}_0^1(\overline{\Omega})$. Then for each $v, \xi \in \mathcal{C}_0^1(\overline{\Omega})$, 
$$ \widehat{T}v + \xi \in \mathcal{C}_0^1(\overline{\Omega}).$$
Since $T_{1,\lambda}:\mathcal{C}_0^1(\overline{\Omega}) \rightarrow \mathcal{C}_0^1(\overline{\Omega})$ is invertible (see Lemma~\ref{T1lambda}), we conclude that there exists $u \in \mathcal{C}_0^1(\overline{\Omega})$ satisfying (\ref{1125}). Therefore
$$R[\mathfrak{L}(\mu_\lambda)] = \mathcal{C}_0^1(\overline{\Omega}) \times R[I - T_{\mu_\lambda}].
$$
\end{proof}

To verify the transversality condition of \cite{RabC}, it remains to determine $$\mathfrak{L}_1:=D_\mu\mathfrak{L}(\mu_\lambda).$$

\begin{Pro}
\label{conda2}
$$\mathfrak{L}_1:=D_\mu\mathfrak{L}(\mu_\lambda) =  \left[\begin{array}{cc}
     0& \displaystyle -(-\Delta)^{-1}\left(\frac{S(\theta_\lambda,0)b(x)}{(PR)(\theta_\lambda,0)}\right)\\
     0& \displaystyle -(-\Delta + Z)^{-1}\left(\frac{b(x)}{R(\theta_\lambda,0)}\right)
\end{array}\right]$$
\end{Pro}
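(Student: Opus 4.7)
The plan is to directly differentiate the four entries of the matrix $\mathfrak{L}(\mu)$ from (\ref{Lmu}) with respect to $\mu$. The key observation is that among the functions $P, Q, R, S, M, N$ appearing in $\mathfrak{F}$, only $N(\mu, u, v)$ carries any $\mu$-dependence, and that dependence is solely through the linear summand $\mu b(x) v$. Consequently $D_\mu N = b(x) v$ and $D_\mu N_v\,\eta = b(x)\eta$, while at $(\theta_\lambda,0)$ we have $D_\mu N_u \equiv 0$, since by (\ref{Nu}) the expression $N_u\xi$ reduces to $Q_{uu}\xi|\nabla\theta_\lambda|^2 + 2Q_u\nabla\theta_\lambda\nabla\xi$, which contains no $\mu$.

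From this observation, the vanishing of the first column of $\mathfrak{L}_1$ is immediate: the $(1,1)$ and $(2,1)$ entries of $\mathfrak{L}(\mu)$ involve only $M_u$ (no $\mu$) and $N_u$ (whose $\mu$-derivative is zero), so $D_\mu$ annihilates them. For the $(2,2)$ entry I would use $\mathfrak{L}_{22}(\mu) = I - T_\mu$ from (\ref{DF3}) and observe that $T_\mu$ defined by (\ref{T}) depends on $\mu$ only through the summand $\mu b(x)/R(\theta_\lambda,0)$ inside the outer $(-\Delta+Z)^{-1}$. Thus $D_\mu T_\mu = (-\Delta+Z)^{-1}\bigl[b(x)/R(\theta_\lambda,0)\bigr]$ and the $(2,2)$ entry of $\mathfrak{L}_1$ is its negative, matching the statement.

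The remaining entry $(1,2)$ requires a short quotient-rule computation. Write $\mathfrak{L}_{12}(\mu) = -(-\Delta)^{-1}\bigl(\tfrac{RM-SN}{PR-QS}\bigr)_v$ evaluated at $(\mu, \theta_\lambda, 0)$. Differentiation with respect to $\mu$ commutes with $D_v$ and with $(-\Delta)^{-1}$; since $P, Q, R, S, M$ do not depend on $\mu$, the quotient rule leaves only the contribution arising from $-S\cdot D_\mu N_v$. Using $Q(\theta_\lambda,0) = 0$ and $N(\mu, \theta_\lambda, 0) = 0$ (so that $PR-QS = PR$ and $RM - SN = RM$ at $(\theta_\lambda,0)$), the expression collapses to a single term proportional to $S(\theta_\lambda,0) b(x)/(PR)(\theta_\lambda,0)$. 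Composing with the outer $-(-\Delta)^{-1}$ then yields the stated formula.

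The main obstacle is really only bookkeeping: tracking the cancellations produced by $Q(s,0) \equiv 0$ and $v = 0$ in the application of the quotient rule, and verifying that all the terms coming from $D_v$ of the denominator or of $R, M, P, S$ themselves drop out of the $\mu$-derivative because they are $\mu$-independent. No new analytic tools are required beyond those already invoked in the derivation of (\ref{Lmu}).
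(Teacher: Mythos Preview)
Your proposal is correct and follows essentially the same approach as the paper's own proof: both rest on the single observation that among all the ingredients of $\mathfrak{L}(\mu)$ only $N_v$ carries $\mu$-dependence, with $N_{\mu v}=b(x)$, after which the result follows by direct differentiation of the four blocks. Your write-up is in fact considerably more explicit than the paper's terse two-line argument, spelling out why the first column vanishes and how the quotient-rule terms collapse at $(\theta_\lambda,0)$.
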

\begin{proof}
Taking into account  that only  $N_v$ depends on $\mu$, by a direct calculation we have
$$ N_{\mu v} \equiv b(x)$$
and the derivatives with respect to $\mu$ of all other term that appear in (\ref{Lmu}) are zero. Then, differentiating  $\mathfrak{L}(\mu)$ with respect to $\mu$ at $\mu_\lambda$   yields
$$\mathfrak{L}_1=D_\mu\mathfrak{L}(\mu_\lambda)  = \left[\begin{array}{cc}
     0& \displaystyle -(-\Delta)^{-1}\left(\frac{S(\theta_\lambda,0)b(x)}{(PR)(\theta_\lambda,0)}\right)\\
     0& \displaystyle -(-\Delta + Z)^{-1}\left(\frac{b(x)}{R(\theta_\lambda,0)}\right)
\end{array}\right].$$
\end{proof}

Finally, we conclude this section by providing that $\mu_\lambda$ is a 1-transversal eigenvalue of  the family $\mathfrak{L}(\mu)$ and an useful characterization  of the complement $N[\mathfrak{L}(\mu_\lambda)]$ on $\mathcal{C}_0^1(\overline{\Omega}) \times \mathcal{C}_0^1(\overline{\Omega})$.

\begin{Pro}\label{condd}
\begin{enumerate}
\item[(a)]
\begin{equation}\label{1transversal2}
\mathfrak{L}_1(N[\mathfrak{L}(\mu_\lambda)]) \oplus R[\mathfrak{L}(\mu_\lambda)] = \mathcal{C}_0^1(\overline{\Omega}) \times \mathcal{C}_0^1(\overline{\Omega}).
\end{equation}
\item[(b)]\begin{equation*}
    N[\mathfrak{L}(\mu_\lambda)] \oplus R[\mathfrak{L}(\mu_\lambda)]= \mathcal{C}_0^1(\overline{\Omega}) \times \mathcal{C}_0^1(\overline{\Omega}).
\end{equation*}
\end{enumerate}
\end{Pro}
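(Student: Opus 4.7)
The plan is to reduce both statements to the obstruction furnished by Corollary~\ref{Tsol}. By Proposition~\ref{conda} one has $R[\mathfrak{L}(\mu_\lambda)]=\mathcal{C}_0^1(\overline{\Omega})\times R[I-T_{\mu_\lambda}]$, so a pair in $\mathcal{C}_0^1(\overline{\Omega})\times\mathcal{C}_0^1(\overline{\Omega})$ fails to lie in $R[\mathfrak{L}(\mu_\lambda)]$ exactly when its second coordinate is outside $R[I-T_{\mu_\lambda}]$. Both (a) and (b) therefore reduce to exhibiting a nonnegative, nontrivial function as the pertinent second coordinate and invoking Corollary~\ref{Tsol}.

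For part (b), I use the generator of $N[\mathfrak{L}(\mu_\lambda)]$ supplied by Proposition~\ref{conda}, namely $(T_{1,\lambda}^{-1}(\widehat{T}\varphi_2),\varphi_2)$, whose second coordinate $\varphi_2$ lies in $\mbox{int}({\cal P})$ and in particular is nonnegative and nontrivial. Corollary~\ref{Tsol} then gives $\varphi_2\notin R[I-T_{\mu_\lambda}]$, so the generator is not in $R[\mathfrak{L}(\mu_\lambda)]$. Since $T_{\mu_\lambda}$ factors through the smoothing operator $(-\Delta+Z)^{-1}$, it is compact on $\mathcal{C}_0^1(\overline{\Omega})$, hence $I-T_{\mu_\lambda}$ is Fredholm of index zero. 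Combined with $N[I-T_{\mu_\lambda}]=\mbox{span}\langle\varphi_2\rangle$ from Lemma~\ref{propT}, this forces $\mbox{codim}\,R[I-T_{\mu_\lambda}]=1$ and the splitting $\mathcal{C}_0^1(\overline{\Omega})=\mbox{span}\langle\varphi_2\rangle\oplus R[I-T_{\mu_\lambda}]$. Lifted to the product space, this yields $N[\mathfrak{L}(\mu_\lambda)]\oplus R[\mathfrak{L}(\mu_\lambda)]=\mathcal{C}_0^1(\overline{\Omega})\times\mathcal{C}_0^1(\overline{\Omega})$.

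For part (a), I apply $\mathfrak{L}_1$ from Proposition~\ref{conda2} to the generator of $N[\mathfrak{L}(\mu_\lambda)]$. The second coordinate of the resulting vector is
$$
-(-\Delta+Z)^{-1}\!\left(\frac{b(x)\varphi_2}{R(\theta_\lambda,0)}\right).
$$
The argument of the inverse is nonnegative and nontrivial (since $b\not\equiv 0$, $\varphi_2>0$ on $\Omega$ and $R(\theta_\lambda,0)\geq R_0>0$), and the strong maximum principle for $-\Delta+Z$ already used in Lemma~\ref{propT} places $(-\Delta+Z)^{-1}(b\varphi_2/R(\theta_\lambda,0))$ in $\mbox{int}({\cal P})$. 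Because $R[I-T_{\mu_\lambda}]$ is a linear subspace, Corollary~\ref{Tsol} applied to this positive function rules out its negative as well. Hence $\mathfrak{L}_1(N[\mathfrak{L}(\mu_\lambda)])$ is a one-dimensional subspace disjoint from $R[\mathfrak{L}(\mu_\lambda)]$, and thus complements it, giving the $1$-transversality identity (\ref{1transversal2}).

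The main obstacle here is modest: the analytic content has been packaged into Lemma~\ref{propT} and Corollary~\ref{Tsol}. The only checks that require attention are the compactness of $T_{\mu_\lambda}$ on $\mathcal{C}_0^1(\overline{\Omega})$ (to invoke Fredholm theory for the codimension-one count in part (b)) and the strict positivity, via the strong maximum principle, of $(-\Delta+Z)^{-1}(b\varphi_2/R(\theta_\lambda,0))$ needed to apply Corollary~\ref{Tsol} in part (a); both follow directly from the smoothing and positivity properties of $(-\Delta+Z)^{-1}$ already exploited in Section~\ref{sec3}.
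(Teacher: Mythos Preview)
Your proof is correct and follows essentially the same route as the paper's: both arguments reduce to showing that the relevant second coordinate lies outside $R[I-T_{\mu_\lambda}]$ by producing a nonnegative, nontrivial function and invoking Corollary~\ref{Tsol}, with the codimension-one count coming from Fredholm theory. The only cosmetic differences are that the paper treats (a) first and leaves (b) to the reader, derives the codimension from the fact that $\mathfrak{L}(\mu_\lambda)$ itself is a compact perturbation of the identity (rather than working with $I-T_{\mu_\lambda}$ directly as you do), and silently drops the minus sign in front of $(-\Delta+Z)^{-1}(b\varphi_2/R(\theta_\lambda,0))$ where you make the linearity of $R[I-T_{\mu_\lambda}]$ explicit.
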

\begin{proof}
To prove paragraph (a), recall that by Proposition \ref{conda}
$$N[\mathfrak{L}(\mu_\lambda)] = \mbox{span} \langle(T_{1,\lambda}^{-1}(\widehat{T}\varphi_2),\varphi_2)\rangle $$
and by Proposition \ref{conda2}
$$\mathfrak{L}_1 =  \left[\begin{array}{cc}
     0& \displaystyle -(-\Delta)^{-1}\left(\frac{S(\theta_\lambda,0)b(x)}{(PR)(\theta_\lambda,0)}\right)\\
     0& \displaystyle -(-\Delta + Z)^{-1}\left(\frac{b(x)}{R(\theta_\lambda,0)}\right)
\end{array}\right].$$
Thus,  $\mathfrak{L}_1(N[\mathfrak{L}(\mu_\lambda)])$ is given by
$$
\left[\begin{array}{cc}
     0& \displaystyle -(-\Delta)^{-1}\left(\frac{S(\theta_\lambda,0)b(x)}{(PR)(\theta_\lambda,0)}\right)\\
     0& \displaystyle -(-\Delta + Z)^{-1}\left(\frac{b(x)}{R(\theta_\lambda,0)}\right)
\end{array}\right] \left[\begin{array}{c}
     T_{1,\lambda}^{-1}\widehat{T}(c\varphi_2)\\
     c\varphi_2
\end{array}\right] = 
\left[\begin{array}{c}
     \displaystyle -(-\Delta)^{-1}\left(\frac{cS(\theta_\lambda,0)b(x) \varphi_2}{(PR)(\theta_\lambda,0)}\right)\\
     \displaystyle -(-\Delta + Z)^{-1}\left(\frac{cb(x)\varphi_2}{R(\theta_\lambda,0)}\right)
\end{array}\right],
$$
where $c \in \R$. That is,
$$\mathfrak{L}_1(N[\mathfrak{L}(\mu_\lambda)]) = \mbox{span} \left\langle
\left[\begin{array}{c}
     \displaystyle (-\Delta)^{-1}\left(\frac{S(\theta_\lambda,0)b(x) \varphi_2}{(PR)(\theta_\lambda,0)}\right)\\
     \displaystyle (-\Delta + Z)^{-1}\left(\frac{b(x)\varphi_2}{R(\theta_\lambda,0)}\right)
\end{array}\right] \right\rangle.$$
Since $\mathfrak{L}(\mu)$ is a Fredholm operator of index zero (because it is a compact perturbation of identity map), we have
$$\mbox{codim}R[\mathfrak{L}(\mu_\lambda)] = \mbox{dim} N[\mathfrak{L}(\mu_\lambda)] = 1$$
and, hence, the complement of $R[\mathfrak{L}(\mu_\lambda)]$ in $\mathcal{C}_0^1(\overline{\Omega})\times \mathcal{C}_0^1(\overline{\Omega})$ is one dimensional. Then, to establish (\ref{1transversal2}) it is sufficient to show
\begin{equation}\label{564}
\left[\begin{array}{c}
     \displaystyle (-\Delta)^{-1}\left(\frac{S(\theta_\lambda,0) \varphi_2}{(PR)(\theta_\lambda,0)}\right)\\
     \displaystyle (-\Delta + Z)^{-1}\left(\frac{b(x)\varphi_2}{R(\theta_\lambda,0)}\right)
\end{array}\right] \not\in R[\mathfrak{L}(\mu_\lambda)]= \mathcal{C}_0^1(\overline{\Omega}) \times R[I-T_{\mu_\lambda}].
\end{equation}
To prove (\ref{564}) we proceed by contradiction. If (\ref{564}) fails, then 
$$\displaystyle (-\Delta + Z)^{-1}\left(\frac{b(x)\varphi_2}{R(\theta_\lambda,0)}\right) \in R[I-T_{\mu_\lambda}]$$
and, hence, there exists $v \in \mathcal{C}_0^1(\overline{\Omega})$ such that
\begin{equation}\label{0147}
v - T_{\mu_\lambda}v =  \displaystyle (-\Delta + Z)^{-1}\left(\frac{b(x)\varphi_2}{R(\theta_\lambda,0)}\right).
\end{equation}
On the other hand, since $(-\Delta + Z)^{-1}$ satisfies the Strong Maximum Principle and 
$$b(x)\varphi_2/R(\theta_\lambda,0) > 0,$$ 
we find that
$$(-\Delta + Z)^{-1}\left(\frac{b(x)\varphi_2}{R(\theta_\lambda,0)}\right) \geq0.$$
By Corollary~\ref{Tsol}, (\ref{564})  has no solution  $v \in \mathcal{C}_0^1(\overline{\Omega})$, which is a contradiction. This completes the proof of (a). The proof of (b) is rather similar and so we omit it.

\end{proof}

\section{Proof of Theorem~\ref{Thsistema}}\label{sec4}

In the previous sections we have collected the necessary results to apply Theorem 6.4.3 of \cite{Bifbook}. We are now able to provide the

\begin{demThs}
We already know that  $\mathfrak{L}(\mu)$  is a compact perturbation of identity map for each $\mu \in \R$ and it is analytic in $\mu$. Moreover, by Proposition~\ref{condd} (a), $\mu_\lambda$ is a 1-transversal eigenvalue of $\mathfrak{L}(\mu)$ and  by Proposition~\ref{conda},
$$N[\mathfrak{L}(\mu_\lambda)] = \mbox{span} \langle(T_{1,\lambda}^{-1}(\widehat{T}\varphi_2),\varphi_2)\rangle,$$
in particular, $\mbox{dim}N[\mathfrak{L}(\mu_\lambda)]=1$. Then, the algebraic multiplicity of $\mathfrak{L}(\mu)$ satisfies
$$\chi[\mathfrak{L}(\mu);\mu_\lambda]=1.$$
Thus, owing to Theorem 5.6.2 of \cite{Bifbook}, the index Leray-Schauder of $\mathfrak{L(\mu)}$ as a compact perturbation of the identity map changes sign as $\lambda$ crosses $\mu_\lambda$. Consequently, we can apply the unilateral bifurcation result of  \cite{Bifbook} (see Theorem 6.4.3).

Let $\mathcal{S}$ denote the set of points $(\mu,u,v) \in \mathbb{R} \times \mathcal{C}_0^1(\overline{\Omega}) \times \mathcal{C}_0^1(\overline{\Omega})$ such that
$$ \mathfrak{F}(\mu,u,v) =0$$
and either $(u,v) \neq (\theta_\lambda,0)$ or $(u,v) = (\theta_\lambda,0)$ and $\mathfrak{L}(\mu)$ is not invertible. Let $\mathfrak{C}^+$ and $\mathfrak{C}^-$ denote the components of $\mathcal{S}$ whose  existence are guaranteed by Proposition 6.4.2 of \cite{Bifbook}. Basically, $\mathfrak{C}^+$ (resp. $\mathfrak{C}^-)$ is a subcontinuum that near to $(\mu_\lambda,\theta_\lambda,0)$ belongs to the positive cone (resp. negative cone).
According to Theorem 6.4.3 of \cite{Bifbook}, one of the following non-excluding options occurs. Either
\begin{enumerate}[{A}1.]
    \item $\mathfrak{C}^+$ is unbounded in $\mathbb{R} \times \mathcal{C}_0^1(\overline{\Omega})\times \mathcal{C}_0^1(\overline{\Omega})$.
    \item There exists another eigenvalue of $\mathfrak{L}(\mu)$, e.g., $\widehat{\mu} \neq \mu_\lambda$, such that 
    $$(\widehat{\mu},\theta_\lambda,0) \in \mathfrak{C}^+.$$
    \item There exists $\mu \in\mathbb{R}$ and $y \in Y\setminus\{0\}$ such that $(\mu,y) \in \mathfrak{C}^+$, where $Y$ is the complement of $N[\mathfrak{L}(\mu_\lambda)$ in $\mathcal{C}_0^1(\overline{\Omega})\times \mathcal{C}_0^1(\overline{\Omega})$.
\end{enumerate}

By Proposition~\ref{condd} (b), we can choose
$$Y:=R[\mathfrak{L}(\mu_\lambda)]= \mathcal{C}_0^1(\overline{\Omega}) \times R[I-T_{\mu_\lambda}].$$
Moreover, by Lemma 6.4.1 of \cite{Bifbook}, the solutions of $\mathfrak{C}^+$ in a neighborhood of $(\mu,\theta_\lambda,0)$ are coexistence states, since $\theta_\lambda$ and $\varphi_2$ are functions belonging to $\mbox{int}({\cal P})$. Let $\mathfrak{C}$ denote the subcontinuum of $\mathfrak{C}^+$ satisfying
$$
\mathfrak{C} \subset \mathbb{R} \times \mbox{int}({\cal P})\times \mbox{int}({\cal P}).
$$

If $\mathfrak{C}$ is unbounded, then alternative 1 of the statement of Theorem~\ref{Thsistema} is satisfied, completing the proof in this case.

During the rest of the proof we assume that $\mathfrak{C}$ is bounded in $\R \times \mathcal{C}_0^1(\overline{\Omega})\times \mathcal{C}_0^1(\overline{\Omega})$. 

\underline{Suppose $\mathfrak{C}=\mathfrak{C}^+$.}

Then $\mathfrak{C}^+$ is bounded and, hence, alternative A1 is not satisfied. Suppose that alternative A3 happens. Then
$$
(\mu,y) \in \mathfrak{C}^+ = \mathfrak{C} \subset \R \times \mbox{int}({\cal P})\times \mbox{int}({\cal P})
$$
for some $y=(y_1,y_2) \in Y = \mathcal{C}_0^1(\overline{\Omega}) \times R[I-T_{\mu_\lambda}]$. In particular,
$$y_2 \in R[I-T_{\mu_\lambda}] \cap \mbox{int}({\cal P})$$
which implies that there exists $v \in \mathcal{C}_0^1(\overline{\Omega})$ such that
$$v-T_{\mu_\lambda}v=y \geq 0,$$
a contradiction with Corollary~\ref{Tsol}. Consequently, $\mathfrak{C}^+$ satisfies alternative A2. In particular, there exist two bifurcation points of coexistence states of (\ref{sistema}): $(\mu_\lambda, \theta_\lambda,0)$ and $(\widehat{\mu},\theta_\lambda,0)$. Hence, there exists a sequence $(\mu_n, u_n, v_n)$ of coexistence states of (\ref{sistema}) such that
$$(\mu_n, u_n, v_n) \rightarrow  (\widehat{\mu},\theta_\lambda,0) \quad \mbox{in}~ \R \times \mathcal{C}_0^1(\overline{\Omega})\times \mathcal{C}_0^1(\overline{\Omega}).$$
Now, consider 
$$
 \widehat{v}_n:= \frac{v_n}{\|v_n\|_0}, \quad n \geq 1.
 $$
Since $\mathfrak{F}(\mu_n, u_n, v_n)=0$ and in view of (\ref{F}), it follows that $(\mu_n, u_n, \widehat{v}_n)$ satisfies
\begin{equation}\label{vnn}
\widehat{v}_n =  (-\Delta + Z)^{-1} \left[\frac{1}{\|v_n\|_0}\left(\displaystyle\frac{PN-QM}{PR-QS}\right)(\mu_n,u_n,v_n) + Z(\widehat{v}_n)\right].
\end{equation}
Once that $\|\widehat{v}_n\|_0=1$ and $(\mu_n,u_n,v_n)$ is bounded in $\R\times\mathcal{C}_0^1(\overline{\Omega}) \times \mathcal{C}_0^1(\overline{\Omega})$ (because it converges), we obtain that
$$
\frac{1}{\|v_n\|_0}\left(\displaystyle\frac{PN-QM}{PR-QS}\right)(\mu_n,u_n,v_n) + Z(\widehat{v}_n) 
$$
is bounded in $\mathcal{C}(\overline{\Omega})$ (recall (\ref{bounfra})). From compactness of the operator  $(-\Delta + Z)^{-1}: \mathcal{C}(\overline{\Omega}) \rightarrow \mathcal{C}_0^1(\overline{\Omega})$, up to a subsequence if  necessary,
$$ \widehat{v}_n \rightarrow  w \quad \mbox{in} ~\mathcal{C}_0^1(\overline{\Omega}),$$
with $\|w\|_0=1$ and $w>0$. In order to take the limit $n \rightarrow \infty$ in (\ref{vnn}) we proceed as follows. Since  $N(\mu,u,0)=0$ for all $\mu \in \R$ and $u \in \mathcal{C}_0^1(\overline{\Omega}),~u>0$, we have
\begin{eqnarray*}
\lim_{n \rightarrow \infty}\frac{N(\mu_n,u_n,v_n)}{\|v_n\|_0} &=& \lim_{n \rightarrow \infty}\frac{N(\mu_n,u_n,\widehat{v}_n\|v_n\|_0)}{\widehat{v}_n\|v_n\|_0} \widehat{v}_n \\
&=& N_v(\widehat{\mu},\theta_\lambda,0)w.
\end{eqnarray*}
Analogously,
$$\lim_{n \rightarrow \infty}\frac{Q(u_n,v_n)}{\|v_n\|_0} = Q_v(\theta_\lambda,0)w.$$
Therefore,
\begin{multline*}
\frac{PN-QM}{\|v_n\|_0}(\mu_n,u_n,v_n) = P(u_n,v_n)\frac{N(\mu_n,u_n,v_n)}{\|v_n\|_0}-\frac{Q(u_n,v_n)}{\|v_n\|_0}M(u_n,v_n) \longrightarrow \cr P(\theta_\lambda,0)N_v(\widehat{\mu},\theta_\lambda,0)w- Q_v(\theta_\lambda,0)w M(\theta_\lambda,0) \quad \mbox{as}~n \rightarrow \infty,
\end{multline*}
Moreover,
$$(PR-QS)(u_n,v_n) \longrightarrow (PR-QS)(\theta_\lambda,0) = (PR)(\theta_\lambda,0) \quad\mbox{as}~n \rightarrow \infty$$
and
$$Z(\widehat{v}_n) \longrightarrow Z(v) \quad\mbox{as}~n \rightarrow \infty.$$
Thus, letting $n \rightarrow \infty$ in (\ref{vnn}) yields
$$ w =  (-\Delta + Z)^{-1} \left[\left(\frac{PN_v w-Q_vwM}{PR}\right)(\widehat{\mu},\theta_\lambda,0) + Z(w)\right] .$$
In particular, by elliptic regularity, $w \in W_0^{2,p}(\Omega)$, for all $p>1$. Substituting $M(\theta_\lambda,0)= -P\Delta\theta_\lambda$ and $Z(w)$, the above equality is equivalent to
\begin{equation}\label{ppppp}
    \left\{ \begin{array}{ll}
         -\mbox{div}(Q_v(\theta_\lambda,0)w\nabla \theta_\lambda+R(\theta_\lambda,0) \nabla w) = \widehat{\mu} b(x) w + G(x,\theta_\lambda,0)\theta_\lambda w & \mbox{in}~\Omega,\\
         w=0 &\mbox{on}~ \partial \Omega.
    \end{array} \right.
\end{equation}
Since $w>0$ and by uniqueness of the principal eigenvalue, (\ref{ppppp}) implies that $\widehat{\mu}=\mu_\lambda$, which is a contradiction, showing that $\mathfrak{C}=\mathfrak{C}^+$ cannot occur. Consequently:\\
$\underline{\mathfrak{C} \subset \mathfrak{C}^+ \quad \mbox{and} \quad \mathfrak{C} \neq \mathfrak{C}^+.}$\\
We now establish that  $\mathfrak{C}$ satisfies either alternative 2 or 3 or 4 of Theorem~\ref{Thsistema}. Since $\mathfrak{C}$ is a proper subset of  $\mathfrak{C}^+$, there exists $(\mu^*,u^*,v^*) \in \R \times \partial ({\cal P} \times {\cal P})$
and a sequence 
$$(\mu_n,u_n,v_n) \in \R \times\mbox{int}({\cal P}) \times \mbox{int}({\cal P}) $$
such that
$$(\mu_n,u_n,v_n) \rightarrow (\mu^*,u^*,v^*) \quad \mbox{in}~ \R \times \mathcal{C}_0^1(\overline{\Omega})\times \mathcal{C}_0^1(\overline{\Omega}).$$
By continuity  $\mathfrak{F}$,
$$0=\mathfrak{F}(\mu_n,u_n,v_n) \rightarrow \mathfrak{F}(\mu^*,u^*,v^*),$$
that is, $(\mu^*,u^*,v^*)$ is a non-negative solution of (\ref{sistema}). Moreover, since
$$
(u^*,v^*) \in \partial ({\cal P} \times {\cal P}),
$$
by the Strong Maximum Principle, $u^*= 0$ or $v^*= 0$. Let us consider the three possible cases for this statement:

\underline{Case 1: $u^*= 0$ and $v^*> 0$}

Define
$$\widehat{u}_n :=\frac{u_n}{\|u_n\|_0}, \quad n\geq1.$$
Then, in view of (\ref{F}) and $\mathfrak{F}(\mu_n,u_n,v_n)=0$, $(\mu_n,\widehat{u}_n,v_n)$ satisfies
\begin{equation}\label{unn}
    \left\{\begin{array}{l}
          \widehat{u}_n=    (-\Delta)^{-1}\left[\displaystyle\frac{1}{\|u_n\|_0}\left(\frac{RM-SN}{PR-QS}\right)(\mu_n,u_n,v_n)\right],\vspace{0.5cm} \\
          v_n =  (-\Delta + Z)^{-1} \left[\left(\displaystyle\frac{PN-QM}{PR-QS}\right)(\mu_n,u_n,v_n) + Z(v_n)\right].
    \end{array}\right.
\end{equation}
Since $\|\widehat{u}_n\|_0=1$ and $(\mu_n,u_n,v_n)$ is bounded in $\R\times\mathcal{C}_0^1(\overline{\Omega}) \times \mathcal{C}_0^1(\overline{\Omega})$ (because it converges), we conclude that 
$$\frac{1}{\|u_n\|_0}\left(\displaystyle\frac{RM-SN}{PR-QS}\right)(\mu_n,u_n,v_n) $$
is bounded in $\mathcal{C}(\overline{\Omega})$. Owing to compactness of  $(-\Delta)^{-1}:\mathcal{C}(\overline{\Omega}) \rightarrow \mathcal{C}_0^1(\overline{\Omega})$, it becomes apparent that, up to a subsequence if  necessary,
$$\widehat{u}_n \rightarrow z \quad \mbox{in}~\mathcal{C}_0^1(\overline{\Omega}), $$
with $\|z\|_0=1$ and $z>0$. Letting $n \rightarrow \infty$ in (\ref{unn}) yields
\begin{equation}\label{zv*}
    \left\{\begin{array}{l}
          z=    (-\Delta)^{-1}\left[\left(\displaystyle\frac{RM_uz-S_uzN}{PR}\right)(\mu^*,0,v^*)\right],\vspace{0.5cm} \\
          v^* =  (-\Delta + Z)^{-1} \left[\left(\displaystyle\frac{PN-QM}{PR}\right)(\mu^*,0,v^*) + Z(v^*)\right].
    \end{array}\right.
\end{equation}
Thus, $z,v^* \in \mathcal{C}_0^{1}(\overline{\Omega})$. Moreover, it follows from second equation of (\ref{zv*}) that
\begin{equation}\label{951}
    \left\{ \begin{array}{ll}
         -\mbox{div}(R(0,v^*) \nabla v^*) = \mu^* b(x) v^* + g(x,v^*)v^* &\mbox{in}~\Omega,\\
         v^*=0 & \mbox{on}~\partial \Omega.
    \end{array} \right.
\end{equation}
That is, $(\mu^*,v^*)$ is a solution of (\ref{stm}). Using that $(\mu^*,v^*)$ satisfies (\ref{951}), the first equation of (\ref{zv*}) is equivalent to
\begin{equation*}
    \left\{ \begin{array}{ll}
         -\mbox{div}(P(0,v^*) \nabla z + S_u(0,v^*)z \nabla v^*) =\lambda a(x)z +  F(x,0,v^*)zv^* & \mbox{in}~\Omega,\\
         z=0 & \mbox{on}~\partial \Omega.
    \end{array} \right.
\end{equation*}
Since $z>0$, we conclude that
$$\lambda = \sigma_1[-\mbox{div}(P(0,v^*) \nabla  + S_u(0,v^*) \nabla v^*) - F(x,0,v^*)v^*;a].$$
Therefore, in this case, alternative 2 is satisfied with
$$\theta_{\mu^*}:=v^*.$$

\underline{Case 2: $u^*>0$ and $v^*=0$}

Define
$$\widehat{v}_n:= \frac{v_n}{\|v_n\|_0},\quad n\geq1.$$
Then, in view of (\ref{F}) and $\mathfrak{F}(\mu_n,u_n,v_n)=0$, $(\mu_n,u_n,\widehat{v}_n)$  satisfies
\begin{equation}\label{8858}
    \left\{\begin{array}{l}
          u_n=    (-\Delta)^{-1}\left[\left(\displaystyle\frac{RM-SN}{PR-QS}\right)(\mu_n,u_n,v_n)\right],\vspace{0.5cm} \\
          \widehat{v}_n =  (-\Delta + Z)^{-1} \left[\displaystyle\frac{1}{\|v_n\|_0}\left(\frac{PN-QM}{PR-QS}\right)(\mu_n,u_n,v_n) + Z(\widehat{v}_n)\right].
    \end{array}\right.
\end{equation}
Since $\|\widehat{v}_n\|_0=1$ and $(\mu_n,u_n,v_n)$ is bounded in $\R \times \mathcal{C}_0^1(\overline{\Omega})\times \mathcal{C}_0^1(\overline{\Omega})$ (because it converges), we conclude that
$$ \displaystyle\frac{1}{\|v_n\|_0}\left(\frac{PN-QM}{PR-QS}\right)(\mu_n,u_n,v_n) + Z(\widehat{v}_n)$$
is bounded in $\mathcal{C}(\overline{\Omega})$. Owing to compactness of $(-\Delta + Z)^{-1}: \mathcal{C}(\overline{\Omega}) \rightarrow \mathcal{C}_0^1(\overline{\Omega})$ it becomes apparent that, up to a subsequence  if  necessary,
$$\widehat{v}_n \rightarrow w \quad \mbox{in}~\mathcal{C}_0^1(\overline{\Omega}),$$
with $\|w\|_0 =1$ and $w>0$. Letting $n \rightarrow \infty$ in (\ref{8858}) yields
\begin{equation}\label{vnn0}
    \left\{\begin{array}{l}
          u^*=    (-\Delta)^{-1}\left[\left(\displaystyle\frac{RM-SN}{PR}\right)(\mu^*,u^*,0)\right],\vspace{0.5cm} \\
          w =  (-\Delta + Z)^{-1} \left[\displaystyle\left(\frac{PN_vw-Q_vwM}{PR}\right)(\mu^*,u^*,0) + Z(w)\right].
    \end{array}\right.
\end{equation}
In particular, by elliptic regularity, $u^*,w \in W_0^{2,p}(\Omega)$, for all $p>1$. Moreover, it follows from the first equation of (\ref{vnn0}) that
\begin{equation}\label{952}
    \left\{ \begin{array}{ll}
         -\mbox{div}(P(u^*,0) \nabla u^* ) =\lambda a(x)u^* +  f(x,u^*)u^* & \mbox{in}~\Omega,\\
         u^*=0 & \mbox{on}~\partial \Omega.
    \end{array} \right.
\end{equation}
That is, $(\lambda,u^*)$ is a positive solution of (\ref{stl}). Using that $(\lambda,u^*)$ satisfies (\ref{952}), the second equation of (\ref{vnn0}) is equivalent to
\begin{equation*}
    \left\{ \begin{array}{ll}
         -\mbox{div}(Q_v(u^*,0)w\nabla u^*+R(u^*,0) \nabla w) = \mu^* b(x) w + G(x,u^*,0)u^*w &\mbox{in}~\Omega,\\
         w=0 & \mbox{on}~\partial \Omega.
    \end{array} \right.
\end{equation*}
Since $w>0$, we conclude that
\begin{equation}\label{32147}
\mu^* = \sigma_1[-\mbox{div}(Q_v(u^*,0)\nabla u^*+R(u^*,0) \nabla)- G(x,u^*,0)u^*;b].
\end{equation}
On the other hand, by construction, there exists $\delta>0$ such that
$$
    (\mu^*,u^*,0) \in \mathfrak{C}^+ \setminus B_\delta(\mu_\lambda, \theta_\lambda,0).
$$
Indeed, for $\delta>0$ small enough we have $\mathfrak{C} = \mathfrak{C}^+$ in $ B_\delta(\mu_\lambda, \theta_\lambda,0)$. In particular,
\begin{equation}\label{654789}
    (\mu^*,u^*,0) \neq (\mu_\lambda, \theta_\lambda,0).
\end{equation}
Let us show that
$$\theta_\lambda \neq u^*.$$
Arguing by contradiction, if $\theta_\lambda = u^*$, it follows from (\ref{32147}) that $\mu_\lambda = \mu^*$, which is a contradiction with (\ref{654789}). Therefore, alternative 3 of Theorem~\ref{Thsistema} occurs with
$$\psi_\lambda :=u^*.$$

\underline{Case 3: $u^*=v^*=0$:}

Define
$$\widehat{u}_n:=\frac{u_n}{\|u_n\|_0} \quad\mbox{and} \quad \widehat{v}_n:=\frac{v_n}{\|v_n\|_0}, \quad n\geq 1.$$
Then the same argument as above shows that, up to a subsequence if necessary,
$$(\widehat{u}_n,\widehat{v}_n) \rightarrow (z,w) \quad \mbox{in}~\mathcal{C}_0^1(\overline{\Omega})\times \mathcal{C}_0^1(\overline{\Omega}),$$
with $\|z\|_0=\|w\|_0 = 1$ and $z,w>0$. It follows from (\ref{F}) and $\mathfrak{F}(\mu_n,u_n,v_n)=0$ that  $(\mu_n,\widehat{u}_n,\widehat{v}_n)$ satisfies
\begin{equation*}
    \left\{\begin{array}{l}
          \widehat{u}_n=    (-\Delta)^{-1}\left[\displaystyle\frac{1}{\|u_n\|_0}\left(\frac{RM-SN}{PR-QS}\right)(\mu_n,u_n,v_n)\right],\vspace{0.5cm} \\
          \widehat{v}_n =  (-\Delta + Z)^{-1} \left[\displaystyle\frac{1}{\|v_n\|_0}\left(\frac{PN-QM}{PR-QS}\right)(\mu_n,u_n,v_n) + Z(\widehat{v}_n)\right].
    \end{array}\right.
\end{equation*}
Letting $n \rightarrow \infty$ yieds
\begin{equation*}
    \left\{\begin{array}{l}
          z=    (-\Delta)^{-1}\left[\displaystyle\left(\frac{RM_u-S_uzN}{PR}\right)(\mu^*,0,0)\right],\vspace{0.5cm} \\
          w =  (-\Delta + Z)^{-1} \left[\displaystyle\left(\frac{PN_vw-Q_vwM}{PR}\right)(\mu^*,0,0) + Z(w)\right].
    \end{array}\right.
\end{equation*}
By elliptic regularity, $z, w \in W_0^{2,p}(\Omega)$ for all $p>1$. Moreover, the above equalites are equivalent to 
\begin{equation*}
    \left\{ \begin{array}{ll}
         -\mbox{div}(P(0,0) \nabla z) =\lambda a(x) z &\mbox{in}~\Omega,\\
         -\mbox{div}(R(0,0) \nabla w) = \mu^* b(x) w&\mbox{in}~\Omega,\\
         z=w=0 &\mbox{on} ~\partial \Omega.
    \end{array} \right.
\end{equation*}
Consequently,
\begin{eqnarray*}
\lambda &=& \sigma_1[-\mbox{div}(P(0,0) \nabla);a],\\
\mu^* &=& \sigma_1[-\mbox{div}(R(0,0) \nabla);b]
\end{eqnarray*}
and alternative 4 is satisfied. The proof is complete.
\end{demThs}

\section{Applications}\label{sec5}

In this section we apply Theorems~\ref{Thsistema} and~\ref{Thsistema'} to some particular systems in order to obtain conditions on parameters $\lambda,\mu \in \R$ which guarantee existence of coexistence states. 

First, once that a necessary condition to be able to use Theorems~\ref{Thsistema} and~\ref{Thsistema'} is that a semitrivial solution (\ref{stl}) (or (\ref{stm})) is nondegenerate,  we present the following auxiliary result.

\begin{Pro}\label{Propdeg}
Consider the equation
\begin{eqnarray}\label{semitrivial}
\left\{ \begin{array}{ll}
     -\mbox{div}(d(w)\nabla w) = \gamma  w + h(w)w&\mbox{in}~\Omega,  \\
     w=0&\mbox{on}~\partial\Omega, 
\end{array}\right.
\end{eqnarray}
under the following assumptions:
\begin{enumerate}
    \item[($H_d$)] $d:[0,\infty) \rightarrow [0,\infty)$ is a function of class $\mathcal{C}^2$, non-decreasing and  there exists a positive constant $d_0$ such that
    $$d(s) \geq d_0 \quad \forall s \in [0,\infty).$$
    \item[($H_h$)] $h: \R \rightarrow \R$ is a continuous function.
\end{enumerate}
If $h'(w)<0$ for all $w \geq 0$, then any strong solution $w \in \mbox{int}({\cal P})$ of (\ref{semitrivial}) is nondegenerate. 
\end{Pro}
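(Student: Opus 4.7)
The plan is to reduce the linearization of (\ref{semitrivial}) at $w$ to a standard Schr\"odinger-type problem $-\Delta V = QV$ with homogeneous Dirichlet conditions, and then show that the principal eigenvalue of $-\Delta - Q$ is strictly positive, which forces $V \equiv 0$ and hence nondegeneracy. The whole reduction hinges on the observation that, for a scalar quasilinear operator, the cross-derivative term in the linearization assembles into a pure Laplacian.

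Concretely, the linearization of (\ref{semitrivial}) at $w \in \mbox{int}({\cal P})$ reads
$$-\mbox{div}\bigl(d'(w)\,u\,\nabla w + d(w)\nabla u\bigr) = \bigl(\gamma + h(w) + w\,h'(w)\bigr) u \quad \mbox{in}~\Omega, \quad u|_{\partial\Omega} = 0.$$
Since $d'(w)\,u\,\nabla w + d(w)\nabla u = \nabla\bigl(d(w)u\bigr)$, the equation is equivalent to $-\Delta(d(w) u) = (\gamma + h(w) + w h'(w))u$; this is exactly what (\ref{av}) yields in the scalar case at hand, with $M_1 = d'$, $M_2 = d$, so that $M_2(v)e^{-h(v)} = d(0)$ is constant. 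Setting $V := d(w) u$, the bound $d(w) \geq d_0 > 0$ implies $V \equiv 0 \Leftrightarrow u \equiv 0$, and the problem reduces to showing that
$$-\Delta V = Q V \quad \mbox{in}~\Omega, \quad V|_{\partial\Omega} = 0, \qquad Q := \frac{\gamma + h(w) + w\,h'(w)}{d(w)} \in \mathcal{C}(\overline{\Omega}),$$
admits only the trivial solution; equivalently, $\sigma_1[-\Delta - Q; 1] > 0$.

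To establish this, I introduce the comparison potential $Q_1 := (\gamma + h(w))/d(w)$. Expanding the divergence in the equation for $w$ gives $-d(w)\Delta w - d'(w)|\nabla w|^2 = (\gamma + h(w))w$, and hence
$$-\Delta w - Q_1\,w = \frac{d'(w)\,|\nabla w|^2}{d(w)} \geq 0,$$
since $d' \geq 0$ by $(H_d)$. Thus $w \in \mbox{int}({\cal P})$ is a positive supersolution of $(-\Delta - Q_1)\phi = 0$, and the standard characterization of the principal eigenvalue (as in the monotonicity properties recalled in the paper before (\ref{autoaux0}), or \cite{LGauto}) gives $\sigma_1[-\Delta - Q_1; 1] \geq 0$.

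Finally, the sign hypothesis $h'(w) < 0$ is precisely what is needed to upgrade this to strict positivity for $Q$. Indeed, $Q - Q_1 = w\,h'(w)/d(w) < 0$ pointwise in $\Omega$, so by strict monotonicity of the principal eigenvalue in the zero-order coefficient,
$$\sigma_1[-\Delta - Q; 1] > \sigma_1[-\Delta - Q_1; 1] \geq 0.$$
Hence $0$ does not lie in the spectrum of $-\Delta - Q$, which forces $V \equiv 0$ and thus $u \equiv 0$. The main obstacle I anticipate is spotting the identity $\nabla(d(w) u) = d'(w) u \nabla w + d(w)\nabla u$ that collapses the quasilinear linearization to a plain Laplacian problem; once this is in hand, the remainder is a routine supersolution-plus-monotonicity argument, with the strict inequality $h'(w) < 0$ providing exactly the room needed to absorb the non-strict bound coming from the supersolution.
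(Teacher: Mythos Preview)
Your proof is correct and shares the paper's core reduction: both recognize that the linearized operator collapses via $V=d(w)u$ to the Schr\"odinger problem $-\Delta V = QV$ with $Q=\bigl(\gamma+h(w)+w\,h'(w)\bigr)/d(w)$, and both conclude by showing $\sigma_1[-\Delta - Q]>0$.

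The difference lies in the comparison potential used to establish that positivity. The paper introduces $I(s)=\int_0^s d(t)\,dt$ and uses that $I(w_0)$ is an exact positive eigenfunction for the potential $\widetilde Q = (\gamma+h(w_0))w_0/I(w_0)$, so $\sigma_1[-\Delta-\widetilde Q]=0$; it then proves $\widetilde Q > Q$ via the inequality $I(w_0)\le d(w_0)w_0$, which in turn requires a preliminary maximum-point argument to secure $\gamma+h(w_0)\ge 0$ so that the direction of the inequality is preserved. You instead take $Q_1=(\gamma+h(w))/d(w)$ and observe that $w$ itself is a positive supersolution of $-\Delta - Q_1$ (because $d'\ge 0$), giving $\sigma_1[-\Delta-Q_1]\ge 0$ directly, and then $Q<Q_1$ is immediate from $w\,h'(w)<0$ in $\Omega$. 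Your route is a little shorter: it dispenses with $I$, with the sign information $\gamma+h(w_0)\ge 0$, and with the comparison $I(w_0)\le d(w_0)w_0$. The paper's route, on the other hand, pins down an exact eigenvalue equality ($\sigma_1=0$ for the auxiliary potential) rather than an inequality, which some readers may find conceptually cleaner. Either way the strict term $w\,h'(w)/d(w)<0$ is what ultimately produces the strict gap $\sigma_1[-\Delta-Q]>0$.
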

\begin{proof}
Let $w_0 \in W^{2,p}(\Omega) \cap \mbox{int}({\cal P})$ be a strong solution of (\ref{semitrivial}). Performing the change of variables
$$ I(s):=\int_0^sd(t)dt \quad s \geq 0,$$
(\ref{semitrivial}) is rewritten as
\begin{eqnarray}\label{semitrivialw0}
\left\{ \begin{array}{ll}
     -\Delta[I(w_0)] = \gamma  w_0 + h(w_0)w_0&\mbox{in}~\Omega,  \\
     w_0=0&\mbox{on}~\partial\Omega.
\end{array}\right.
\end{eqnarray}
Let $x_M \in \Omega$ such that $w_0(x_M) = \|w_0\|_0$. Since $s\mapsto d(s)$ is non-decreasing, we have $I(w_0(x_M)) = \|I(w_0)\|_0$. Therefore,
\begin{eqnarray*}
0&\leq&-\Delta[I(w_0(x_M))] = \gamma  w_0(x_M) + h(w_0(x_M))w_0(x_M)\\
0&\leq&\gamma + h(w_0(x_M)).
\end{eqnarray*}
By the monotonicity of $h(s)$, we obtain from above inequality that
\begin{equation}\label{positive}
0\leq\gamma + h(w_0(x)) \quad \forall x \in \Omega.    
\end{equation}
The linearization of (\ref{semitrivialw0}) at $(\gamma,w_0)$ is given by
\begin{eqnarray}\label{stlinear}
\left\{ \begin{array}{ll}
     -\Delta[d(w_0)\xi]= [\gamma  +h(w_0)+w_0h'(w_0) ]\xi &\mbox{in}~\Omega,  \\
     \xi=0&\mbox{on}~\partial\Omega, 
\end{array}\right.
\end{eqnarray}
Suppose by contradiction that $\xi \not\equiv0$ is a strong solution of (\ref{stlinear}). Using the change of variable
$$d(w_0)\xi = \psi,$$
(\ref{stlinear}) is equivalent to
\begin{eqnarray*}
\left\{ \begin{array}{ll}
     -\Delta \psi = \displaystyle\frac{\gamma +h(w_0)+w_0h'(w_0) }{d(w_0)}\psi &\mbox{in}~\Omega,  \\
     \xi=0&\mbox{on}~\partial\Omega, 
\end{array}\right.
\end{eqnarray*}
which implies that
$$0=\sigma_j\left[-\Delta - \frac{\gamma +h(w_0)+w_0h'(w_0) }{d(w_0)}\right],\quad\mbox{for some $j\geq 1$.}
$$
From the dominance property of the principal eigenvalue it follows that
\begin{equation}\label{5ok}
0\geq\sigma_1\left[-\Delta - \frac{\gamma +h(w_0)+w_0h'(w_0) }{d(w_0)}\right].
\end{equation}
On the other hand, since $w_0\in \mbox{int}({\cal P})$ is a solution of (\ref{semitrivialw0}), then $I(w_0(x))>0$ for all $x \in \Omega$ and, hence,

\begin{equation}\label{95d}
0= \sigma_1\left[-\Delta - \frac{\gamma  w_0 + h(w_0)w_0}{I(w_0)}\right].
\end{equation}
We claim that 
\begin{equation}\label{claim}
- \frac{\gamma  w_0 + h(w_0)w_0}{I(w_0)} <  - \frac{\gamma  +h(w_0)+w_0h'(w_0) }{d(w_0)} .
\end{equation}
Assume this claim for a moment. By the monotonicity properties of the principal eigenvalue,  (\ref{claim}) and (\ref{95d}) imply   that
$$0< \sigma_1\left[-\Delta - \frac{\gamma  +h(w_0)+w_0h'(w_0) }{d(w_0)}\right],$$
which is a contradiction with (\ref{5ok}). Hence, to complete the proof it suffices to show (\ref{claim}). Indeed, (\ref{claim}) is equivalent to
\begin{eqnarray}\label{dddd}
\frac{(\gamma  + h(w_0))w_0}{I(w_0)} &>&   \frac{\gamma  +h(w_0)}{d(w_0)} +\frac{w_0h'(w_0)}{d(w_0)} 
\end{eqnarray}
On the other hand,  since $s \mapsto d(s)$ is non-decreasing, we have that
\begin{equation}\label{dw0}
0<I(w_0)= \int_0^{w_0}d(t)dt \leq d(w_0)\int_0^{w_0}dt=d(w_0)w_0.
\end{equation}
Thus, it follows from (\ref{positive}) and (\ref{dw0}) that
\begin{equation}
    \frac{(\gamma  + h(w_0))w_0}{I(w_0)} >   \frac{\gamma  +h(w_0)}{d(w_0)}.
\end{equation}
Moreover, once that $h'(s)<0$, $s \geq 0$, we can infer that $0>w_0h'(w_0)/d(w_0)$.
Combining this inequality with (\ref{dw0}), we obtain (\ref{dddd}). This completes the proof.
\end{proof}

From the point of view of population dynamics, an important particular case of (\ref{semitrivial}) is the logistic equation,  that is,
\begin{eqnarray}\label{logistico}
\left\{ \begin{array}{ll}
     -\mbox{div}(d(w)\nabla w) = \gamma  w - w^2&\mbox{in}~\Omega,  \\
     w=0&\mbox{on}~\partial\Omega.
\end{array}\right.
\end{eqnarray}
It appears, for instance, when one considers the Lotka-Volterra, Holling-II or Holling-Tanner reaction term, which are most commonly used in the literature.

The following result is consequence of, for instance, \cite{CC1991, ACP} and Proposition~\ref{Propdeg}.
\begin{Lema}
\label{obs2}
Under the hypothesis ($H_d$), (\ref{logistico}) possesses a  positive (classical) solution if, and only if, 
$$
\gamma >\sigma_1[-d(0)\Delta]
$$
 and it is unique if exists. Moreover, it is non-degenerate. 
\end{Lema}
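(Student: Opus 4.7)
The plan is to peel the statement into three claims and dispatch them in turn: (i) existence is equivalent to $\gamma>\sigma_1[-d(0)\Delta]$, (ii) the positive solution is unique when it exists, and (iii) it is non-degenerate. Claims (i) and (ii) are classical results about the quasilinear logistic equation and will be imported essentially by citation to \cite{CC1991, ACP}, while (iii) will follow by a direct application of Proposition~\ref{Propdeg}.

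For (iii), which is the only part that is genuinely local to this paper, I would write the right-hand side of (\ref{logistico}) in the form $\gamma w + h(w)w$ with $h(w):=-w$. Then $h\in\mathcal{C}^1(\mathbb{R})$ and $h'(w)=-1<0$ for every $w\ge 0$, so hypothesis ($H_h$) together with the sign condition on $h'$ required in Proposition~\ref{Propdeg} is satisfied. Invoking that proposition, any positive strong solution $w\in\mbox{int}({\cal P})$ of (\ref{logistico}) is automatically non-degenerate. Nothing else is needed here.

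For (i), sketching the standard argument I would keep in the background: for the \emph{necessity} direction, assume $w\in\mbox{int}({\cal P})$ solves (\ref{logistico}), test against the principal eigenfunction $\varphi_1$ of $-d(0)\Delta$, integrate by parts, and use $d(w)\ge d(0)$ (from monotonicity in ($H_d$)) together with $w>0$ to obtain $\sigma_1[-d(0)\Delta]<\gamma$. For \emph{sufficiency}, perform the change of variables $W=I(w):=\int_0^w d(t)\,dt$, which is a $\mathcal{C}^2$-diffeomorphism of $[0,\infty)$ because $d\ge d_0>0$; then (\ref{logistico}) becomes a semilinear equation $-\Delta W=\gamma I^{-1}(W)-(I^{-1}(W))^2$ to which the sub/super-solution scheme of \cite{CC1991, ACP} applies: a large constant $\overline{W}$ is a supersolution, and $\underline{W}=\varepsilon\varphi_1$ is a subsolution for sufficiently small $\varepsilon$ precisely when $\gamma>\sigma_1[-d(0)\Delta]$.

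For (ii), the same change of variable reduces uniqueness to the semilinear problem, where the right-hand side $W\mapsto \gamma I^{-1}(W)-(I^{-1}(W))^2$ has the sublinearity property (its ratio with $W$ is decreasing), so the Brezis--Oswald/Krasnoselskii-type argument used in \cite{CC1991, ACP} gives uniqueness. The main obstacle would be checking the sublinearity of $W\mapsto \gamma I^{-1}(W)-(I^{-1}(W))^2$ in enough generality, but this is a by-now routine verification under ($H_d$) and is what the cited references handle. In summary, the only part requiring actual argument from scratch is non-degeneracy, which reduces to a one-line verification of the hypotheses of Proposition~\ref{Propdeg}.
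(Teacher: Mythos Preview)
Your proposal is correct and follows exactly the paper's approach: the paper itself gives no proof beyond the sentence ``The following result is consequence of, for instance, \cite{CC1991, ACP} and Proposition~\ref{Propdeg},'' so your decomposition---citing \cite{CC1991, ACP} for existence and uniqueness and verifying the hypotheses of Proposition~\ref{Propdeg} (with $h(w)=-w$, $h'(w)=-1<0$) for non-degeneracy---is precisely what the authors intend, only with the background sketches spelled out.
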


\subsection{An abstract model}

To illustrate how one can use Theorems~\ref{Thsistema} and~\ref{Thsistema'} to determine a region of coexistence of positive solutions, we will consider the following system:
\begin{eqnarray}\label{Ap1}
\left\{ \begin{array}{ll}
     -\mbox{div}(A(v)G'(u)H(v)\nabla u + A(v)G(u)H'(v) \nabla v)) = u(\lambda - u - bv)&\mbox{in}~\Omega,  \\
     -\Delta v = v(\mu - v + cu)&\mbox{in}~\Omega,\\
     u=v=0&\mbox{on}~\partial\Omega, 
\end{array}\right.
\end{eqnarray}
where $b,c$ are positive constants. Here, $u$ and $v$ represent the population densities of a prey and a predator, respectively, inhabiting in $\Omega$. In the case of the prey, 
a term of self-diffusion and another of cross-diffusion appear. We also assume:
\begin{enumerate}
    \item[($H_A$)] $A:[0,\infty) \rightarrow [0,\infty)$ are nontrivial functions of class $\mathcal{C}^2$ such that
    $$
    A(v) \geq \underline{A}>0 \quad \forall v \geq 0
    $$
    for some positive constant $\underline{A}$.
    \item[($H_G$)] $G:[0,\infty) \rightarrow [0,\infty)$ is a function of class $\mathcal{C}^3$ such that
    $$
    G(0)=0, \quad \lim_{s \rightarrow\infty}G(s)= \infty
    $$
    and $G'$ is a non-decreasing function such that
    $$
    G'(s) \geq G_0>0 \quad \forall s \geq 0,
    $$
    for some positive constant $G_0$.
    
    \item[($H_H$)] $H:[0,\infty) \rightarrow \R $ is a function of class $\mathcal{C}^2$  such $$
    H_0 \leq H(v) \leq H_1 \quad \forall v \geq 0
    $$
    for some positive constants $H_0$ and $H_1$.
\end{enumerate}
The following functions satisfy all the above hypothesis: 
$$A(v) = v+1,\quad G(u) =u^2+u,\quad H(v)=\frac{v+2}{v+1}.$$

Let us show that, combining Theorems~\ref{Thsistema} and~\ref{Thsistema'} with, for instance, a result of a priori bound and an appropriate non-existence result, one can determine a region of coexistence states.

The non-negative semitrivial solutions $(u,0)$ and $(0,v)$   of (\ref{Ap1}) are given by
\begin{eqnarray}\label{stl2}
\left\{\begin{array}{ll}
    -\mbox{div}(A(0)H(0) G'(u) \nabla u) = u (\lambda - u) & \mbox{in }\Omega,  \\
    u=0 & \mbox{on }\partial\Omega,
\end{array} \right.
\end{eqnarray}
and
\begin{eqnarray}\label{stm2}
\left\{\begin{array}{ll}
    -\Delta v = v (\mu - v) & \mbox{in }\Omega,  \\
    v=0 & \mbox{on }\partial\Omega,
\end{array} \right.
\end{eqnarray}
respectively. Since $s \mapsto G'(s)$ is a non-decreasing function, by Lemma~\ref{obs2}, (\ref{stl2}) and (\ref{stm2}) possess a (unique and nondegenerate) positive solution if, and only if, 
$$\lambda> \sigma_1[-A(0)H(0)G'(0) \Delta]\quad\mbox{and}\quad\mu > \lambda_1
$$ 
and they will be denoted by $\theta_\lambda$ and $\theta_\mu$, respectively. 

In addition, the maps $\lambda \in (\sigma_1[-A(0)H(0)G'(0) \Delta],\infty) \mapsto \theta_\lambda \in \mathcal{C}_0^1(\overline{\Omega})$ and $\mu \in  (\lambda_1,\infty) \mapsto \theta_\mu \in \mathcal{C}_0^1(\overline{\Omega})$ are increasing.

Moreover, for this system,  the eigenvalues that appear in Theorems~\ref{Thsistema} and~\ref{Thsistema'} can be defined as follow:
\begin{equation}\label{muAp1}
\mu_\lambda:= \left\{ \begin{array}{ll}
    \sigma_1 [-\Delta - c \theta_\lambda]     & \mbox{if}~ \lambda >  \sigma_1[-A(0)H(0)G'(0) \Delta] , \\
     \lambda_1& \mbox{if}~\lambda \leq  \sigma_1[-A(0)H(0)G'(0) \Delta],
\end{array} \right.
\end{equation}
and
\begin{equation}\label{lAp1}
    \lambda_\mu:= \left\{ \begin{array}{ll}
         \sigma_1 [-div(G'(0)A(\theta_\mu)H(\theta_\mu)e^{-h(\theta_\mu)}\nabla) + c \theta_\mu e^{-h(\theta_\mu)};e^{-h(\theta_\mu)}] & \mbox{if}~\mu >  \lambda_1,  \\
         0&\mbox{if}~\mu \leq\lambda_1,
    \end{array} \right.
\end{equation}
where 
$$
h(z):= \int_0^z\frac{H'(s)}{H(s)}ds, \quad z \geq 0.
$$

It should be noted that, by the monotonicity properties of the principal eigenvalue, the function $\lambda \mapsto \mu_\lambda$ is  decreasing for 
$\lambda>\sigma_1[-A(0)H(0)G'(0) \Delta]$. However, it is not easy to ascertain monotony properties of the map $\mu\mapsto\lambda_\mu$.

Now we will show a result of a priori bound on the coexistence states of (\ref{Ap1}).

\begin{Lema}\label{cotasAp}
Suppose that $(u,v)$ is a coexistence state of (\ref{Ap1}). Then there exists a positive constant $\overline{C}=\overline{C}(\lambda,b)$  such that
$$
\|u\|_0 \leq \overline{C}\quad \mbox{and}\quad\|v\|_0 \leq \mu+c\overline{C}.
$$
Moreover, there exists a positive constant $C^*=C^*(\lambda,\mu,b,c,G_0,H_0,H_1)$ such that
$$
\|u\|_{\mathcal{C}^1},~\|u\|_{\mathcal{C}^1} \leq C^*.
$$
\end{Lema}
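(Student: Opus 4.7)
The cornerstone of the argument is the product-rule identity
\[
A(v)G'(u)H(v)\nabla u + A(v)G(u)H'(v)\nabla v \;=\; A(v)\,\nabla\bigl(G(u)H(v)\bigr),
\]
which lets me introduce the auxiliary unknown $w:=G(u)H(v)$. The first equation of (\ref{Ap1}) becomes the scalar divergence-form equation
\[
-\mbox{div}\bigl(A(v)\nabla w\bigr)\;=\;u(\lambda-u-bv)\quad\text{in }\Omega,\qquad w=0 \ \text{on }\partial\Omega,
\]
which is uniformly elliptic because $A(v)\geq \underline{A}>0$. Since $u>0$ and $H(v)\geq H_0>0$ in $\Omega$, we have $w>0$ in $\Omega$, so the (strictly positive) maximum $w(x_M)$ is attained at an interior point $x_M$. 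There $\nabla w(x_M)=0$ and $\Delta w(x_M)\le 0$, so $-\mbox{div}(A(v)\nabla w)(x_M)=-A(v(x_M))\Delta w(x_M)\ge 0$, and the equation forces $u(x_M)(\lambda-u(x_M)-bv(x_M))\geq 0$, whence $u(x_M)\leq \lambda$. Combining this with the two-sided bound $H_0\le H\le H_1$ and the monotonicity of $G$ yields, for every $x\in\Omega$,
\[
G(u(x))\;\le\;\frac{w(x)}{H_0}\;\le\;\frac{w(x_M)}{H_0}\;\le\;\frac{G(\lambda)\,H_1}{H_0},
\]
and inverting $G$ (which is possible by $(H_G)$) gives the desired bound $\|u\|_0\leq \overline{C}:=G^{-1}\bigl(G(\lambda)H_1/H_0\bigr)$.

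The $L^\infty$ bound on $v$ is then immediate from the classical maximum principle applied to $-\Delta v=v(\mu-v+cu)$: at a maximum of $v$, one has $v(x_M)\leq \mu+c\,u(x_M)\leq \mu+c\overline{C}$.

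To upgrade to $\mathcal{C}^1$ bounds I bootstrap. First, the right-hand side $v(\mu-v+cu)$ is controlled in $L^\infty$ by $\mu,c,\overline{C}$; $W^{2,p}$ elliptic regularity for $-\Delta$ then gives $v\in W^{2,p}(\Omega)$ for every $p<\infty$ with an a priori bound, and Sobolev embedding (for $p>N$) yields a bound on $\|v\|_{\mathcal{C}^{1,\alpha}(\overline{\Omega})}$. Next, rewriting the $w$-equation in nondivergence form,
\[
-\Delta w\;+\;\frac{A'(v)}{A(v)}\nabla v\cdot\nabla w\;=\;\frac{u(\lambda-u-bv)}{A(v)},
\]
the drift coefficient is in $L^\infty$ (because $A\ge \underline{A}>0$, $A'$ is continuous, and $\nabla v\in L^\infty$) and the right-hand side is in $L^\infty$; Calderón--Zygmund estimates therefore deliver $w\in W^{2,p}(\Omega)\hookrightarrow \mathcal{C}^{1,\alpha}(\overline{\Omega})$ with a bound depending only on $\lambda,\mu,b,c,G_0,H_0,H_1$. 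Finally, since $H\geq H_0>0$ is $\mathcal{C}^2$ and $G^{-1}$ is $\mathcal{C}^1$ (because $G'\geq G_0>0$), the representation $u=G^{-1}\bigl(w/H(v)\bigr)$ exhibits $u$ as a $\mathcal{C}^1$-function of quantities already controlled in $\mathcal{C}^1$, producing the claimed uniform $\mathcal{C}^1$-bound on $u$.

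The only nonroutine step is spotting the divergence-form rewriting $A(v)\nabla(G(u)H(v))$ that reduces the whole cross-diffusive operator to a single scalar equation in $w$; once this substitution is in hand, the $L^\infty$ estimate reduces to the elementary maximum principle, and the $\mathcal{C}^1$ estimate is a standard two-step bootstrap between the equations for $v$ and $w$. Any genuine difficulty would arise only if one insisted on working directly with $u$ in the first equation, where the coefficient $A(v)G'(u)H(v)$ of $\nabla u$ itself depends on the unknowns and the cross term $A(v)G(u)H'(v)\nabla v$ would need to be handled as a forcing term with $\nabla v$ bounds available only after step one---the $w$-trick sidesteps this entirely.
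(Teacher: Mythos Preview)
Your proof is correct and rests on the same key observation as the paper, namely the substitution $w=G(u)H(v)$ turning the first equation into $-\mbox{div}(A(v)\nabla w)=u(\lambda-u-bv)$. The execution differs in two minor respects. For the $L^\infty$ bound on $u$, you evaluate at an interior maximum of $w$ (exactly as the paper does for $v$), whereas the paper multiplies by the test function $[w-G(\lambda)H_1]_+$ and integrates; both yield the identical constant $\overline{C}=G^{-1}(G(\lambda)H_1/H_0)$, and your pointwise argument is arguably the more direct of the two. For the $\mathcal{C}^1$ bound on $u$, you bound $w$ in $W^{2,p}$ via the nondivergence form (note a harmless sign slip: the drift term is $-\tfrac{A'(v)}{A(v)}\nabla v\cdot\nabla w$) and then recover $u=G^{-1}(w/H(v))$, while the paper bounds $\|w\|_{\mathcal{C}^1}$ and extracts $|\nabla u|$ from the chain-rule identity $\nabla w=G'(u)H(v)\nabla u+G(u)H'(v)\nabla v$ together with $G'\geq G_0$, $H\geq H_0$; these are equivalent manipulations.
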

\begin{proof}
Let $(u,v)$ be a coexistence state of (\ref{Ap1}). First, we will get an estimate on $\|u\|_0$. Once that
$$
A(v)G'(u)H(v)\nabla u + A(v)G(u)H'(v) \nabla v) = A(v)\left(  \nabla (G(u) H(v))\right),
$$
$(u,v)$ satisfies
\begin{eqnarray}\label{cota01}
\left\{\begin{array}{ll}
     -\mbox{div}\left[A(v)\left(  \nabla (G(u) H(v))\right) \right] = u(\lambda - u - bv)&  \mbox{in }\Omega, \\
     u=v=0& \mbox{on }\partial \Omega. 
\end{array} \right.
\end{eqnarray}
Multiplying (\ref{cota01}) by $[G(u)H(v)-G(\lambda)H_1]_+$ and applying the formula of integration by parts gives
\begin{eqnarray}\label{cota02}
0 \leq \int_\Omega A(v) \left|\nabla \left[G(u) H(v) - G(\lambda)H_1\right]_+\right|^2 &=& \int_\Omega u(\lambda - u - bv)\left[G(u) H(v) - G(\lambda)H_1\right]_+ \nonumber\\
&\leq&\int_\Omega u(\lambda - u)\left[G(u) H(v) - G(\lambda)H_1\right]_+.
\end{eqnarray}
Note that in $\{x \in \Omega;~G(u) H(v) \leq G(\lambda)H_1\}$, we have:
\begin{equation}\label{ul1}
u(\lambda - u)\left[G(u) H(v) - G(\lambda)H_1\right]_+=0 
\end{equation}
and in $\{x \in \Omega;~G(u) H(v) > G(\lambda)H_1\}$, we get $u>\lambda$ and, hence,
\begin{equation}\label{ul2}
u(\lambda - u)\left[G(u) H(v) - G(\lambda)H_1\right]_+< 0.
\end{equation} 
Combining (\ref{cota02}), (\ref{ul1}) and (\ref{ul2}), we can infer that
\begin{equation*}
    0=\int_\Omega u(\lambda - u)\left[G(u) H(v) - G(\lambda)H_1\right]_+.
\end{equation*}
Since the function $u(\lambda - u)\left[G(u) H(v) - G(\lambda)H_1\right]_+$ is non-negative (according to (\ref{ul1})-(\ref{ul2})), the above equality implies that
$$
u(\lambda - u)\left[G(u) H(v) - G(\lambda)H_1\right]_+ \equiv 0 \quad \mbox{in}~\Omega.
$$
In view of (\ref{ul1}) and (\ref{ul2}), we must have, necessarily, $G(u) H(v) \leq G(\lambda)H_1$ and, hence,
$$
u \leq G^{-1}\left(G(\lambda) H_1/H_0 \right)=: \overline{C}(\lambda).
$$

Now, we will show a priori bound on  $\|v\|_0$ and $\|v\|_{\mathcal{C}^1}$. 
Let $x_M \in \Omega$ such that $v_M:=v(x_M) = \max_\Omega v(x)$. Then,
\begin{eqnarray*}
0&\leq& -\Delta v_M = v_M(\mu - v_M + cu(x_M)) \\
v_M &\leq& \mu + c u(x_M) \leq \mu + c \overline{C}(\lambda)
\end{eqnarray*}
Then, $\|v(\mu-v+cu)\|_p$ is bounded for all $p>1$, and hence, by elliptic regularity, there exists a constant $C_0 = C_0(\lambda, c, \mu)$ such that
$$
\|v\|_{\mathcal{C}^1} \leq C_0.
$$

To complete the proof, it remains to show that $\|u\|_{\mathcal{C}^1}$ is also bounded. Indeed, since $(u,v)$ verifies (\ref{cota01}) and $v$ is bounded in 
$\mathcal{C}^1(\overline\Omega)$, we can apply again the standard elliptic regularity to (\ref{cota01}) and conclude that there exists a positive constant $C_1=C_1(\lambda,\mu,b,c)$ such that
\begin{equation}\label{C1}
\|G(u)H(v)\|_{\mathcal{C}^1} \leq C_1.
\end{equation}
On the other hand,
\begin{eqnarray*}
    &\nabla(G(u)H(v)) = G'(u)H(v)\nabla u + G(u) H'(v) \nabla v& \\
&|\nabla(G(u)H(v)) - G(u) H'(v) \nabla v | = |G'(u)H(v)\nabla u| &
\end{eqnarray*}
Using the triangular inequality, ($H_G$) and ($H_H$) we obtain that
\begin{equation}\label{Cfinal}
    |\nabla(G(u)H(v))| + |G(u) H'(v) \nabla v | \geq H_0G_0 |\nabla u| 
\end{equation}
Since $u$  is bounded  in $\mathcal{C}(\overline{\Omega})$ and $v$ is bounded in $\mathcal{C}^1(\overline{\Omega})$, there exists a positive constant $C_2 =C_2(\lambda,\mu,b,c)$ such that
\begin{equation}\label{C2}
\|G(u)H'(v)\nabla v\|_0 \leq C_2
\end{equation}
Thus, in view of (\ref{C1}) and (\ref{C2}), we can infer from (\ref{Cfinal}) that
$$
C_1 + C_2 \geq G_0 H_0 \|\nabla u\|_0. 
$$
This completes the proof.
\end{proof}

The next lemma gives an appropriate non-existence result of positive solutions.

\begin{Lema}\label{NexistenciaAp1}
\begin{enumerate}
    \item[(a)] If $\lambda \leq 0$, then (\ref{Ap1}) does not admit coexistence states.
    \item[(b)] The problem  (\ref{Ap1}) does not admit coexistence states for $\mu \leq \lambda_1 - c \overline{C}$, where $\overline{C} = \overline{C}(\lambda,b)$ is the positive constant given in Lemma \ref{cotasAp}.
\end{enumerate}
\end{Lema}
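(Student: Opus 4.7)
The plan is to reduce each equation of (\ref{Ap1}) to a scalar elliptic problem and deduce a contradiction by a simple maximum-principle or monotonicity-of-principal-eigenvalue argument.

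\textbf{Part (a).} The key observation is that
\[
A(v)G'(u)H(v)\nabla u+A(v)G(u)H'(v)\nabla v = A(v)\nabla\bigl(G(u)H(v)\bigr),
\]
so, setting $w:=G(u)H(v)$, the first equation of (\ref{Ap1}) becomes
\[
-\mbox{div}\bigl(A(v)\nabla w\bigr)=u(\lambda-u-bv)\quad\text{in }\Omega,\qquad w=0\text{ on }\partial\Omega.
\]
Since $(u,v)$ is a coexistence state (so $u,v>0$ in $\Omega$) and, by ($H_G$)--($H_H$), $G(0)=0$, $G'\geq G_0>0$ and $H\geq H_0>0$, we have $w>0$ in $\Omega$ and $w=0$ on $\partial\Omega$. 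If $\lambda\leq 0$, then $\lambda-u-bv<\lambda\leq 0$ throughout $\Omega$, so $w$ is a strict subsolution of the uniformly elliptic divergence-form operator $-\mbox{div}(A(v)\nabla\,\cdot\,)$ (uniform ellipticity follows from $A(v)\geq\underline{A}>0$) with zero boundary data. The weak maximum principle then forces $\max_{\overline\Omega}w=\max_{\partial\Omega}w=0$, contradicting $w>0$ in $\Omega$.

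\textbf{Part (b).} Rearranging the second equation of (\ref{Ap1}) gives
\[
-\Delta v+(v-cu)v=\mu v\quad\text{in }\Omega,\qquad v=0\text{ on }\partial\Omega.
\]
Because $v>0$ in $\Omega$, uniqueness of the principal eigenvalue among positive eigenfunctions identifies
\[
\mu=\sigma_1[-\Delta+(v-cu);1].
\]
By Lemma~\ref{cotasAp}, $u\leq\overline{C}$ in $\Omega$, and since $v>0$ in $\Omega$,
\[
v(x)-cu(x)>-cu(x)\geq -c\overline{C}\qquad\text{for all }x\in\Omega.
\]
The strict monotonicity of the principal eigenvalue with respect to the zero-order coefficient then yields
\[
\mu=\sigma_1[-\Delta+(v-cu);1]>\sigma_1[-\Delta-c\overline{C};1]=\lambda_1-c\overline{C},
\]
contradicting the hypothesis $\mu\leq\lambda_1-c\overline{C}$.

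No serious obstacle is expected in either part; the only mildly delicate point is ensuring the strict inequality in (b), which is immediate from the pointwise strict positivity of $v$ on $\Omega$ guaranteed by the coexistence assumption, combined with the a priori bound from Lemma~\ref{cotasAp}.
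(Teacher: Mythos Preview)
Your proof is correct and follows essentially the same approach as the paper: for (a) you both rewrite the first equation in divergence form for $w=G(u)H(v)$ and apply the maximum principle, and for (b) you both characterize $\mu$ as the principal eigenvalue $\sigma_1[-\Delta+v-cu]$ and use the a priori bound $u\leq\overline{C}$ together with monotonicity of the principal eigenvalue in the potential.
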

\begin{proof}
Let $(u,v)$ be a coexistence state of (\ref{Ap1}). Suppose by contradiction that $\lambda \leq 0$. Then, by (\ref{cota01}) we get
\begin{eqnarray*}
\left\{\begin{array}{ll}
     -\mbox{div}\left[A(v) \nabla \left(G(u)H(v)\right) \right] \leq 0&  \mbox{in }\Omega, \\
     G(u) H(v)=0& \mbox{on }\partial \Omega. 
\end{array} \right.
\end{eqnarray*}
It follows from the Maximum Principle that $G(u) H(v) \leq 0$ and, hence, $G(u)\leq 0$, which is impossible since $G(s) > 0$ for $s\geq 0$ (according to ($H_G$)).

To prove (b), note that $v$  verifies
\begin{eqnarray*}
\left\{\begin{array}{ll}
     -\Delta v  = v(\mu - v + cu)&  \mbox{in }\Omega, \\
    v=0& \mbox{on }\partial \Omega.
\end{array} \right.
\end{eqnarray*}
Consequently,
$$
\mu = \sigma_1[-\Delta + v - cu].
$$
By the monotonicity properties of the principal eigenvalue combined with Lemma \ref{cotasAp}, we find that
$$
\mu > \sigma_1[-\Delta  - c\overline{C}] = \lambda_1 - c \overline{C},
$$
which completes the proof.
\end{proof}

Now, we are able to apply Theorems~\ref{Thsistema} and~\ref{Thsistema'} to obtain a region of coexistence of (\ref{Ap1}).

\begin{Teo}\label{thap1}
Assume ($H_A$), ($H_G$) and ($H_H$). Then
(\ref{Ap1}) possesses at least one coexistence state for each $(\lambda,\mu) \in \R^2$ such that 
\begin{equation}
\label{condicoe1}
\mu > \mu_\lambda\quad\mbox{and}\quad\lambda> \lambda_\mu, 
\end{equation}
where $\mu_\lambda$ and $\lambda_\mu$ are given in (\ref{muAp1}) and (\ref{lAp1}).
\end{Teo}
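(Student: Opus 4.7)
The plan is to invoke the bifurcation Theorems~\ref{Thsistema} and~\ref{Thsistema'} to generate global continua of coexistence states, and then use the a priori bounds of Lemma~\ref{cotasAp} together with the non-existence results of Lemma~\ref{NexistenciaAp1} to force the projection of those continua onto the bifurcation parameter to sweep across the required region.

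\textbf{Verifying nondegeneracy.} I first confirm the hypotheses needed for the bifurcation theorems. The semitrivial equation (\ref{stl2}) is of the form (\ref{semitrivial}) with $d(s) = A(0)H(0)G'(s)$, which is nondecreasing by $(H_G)$ and bounded below by $A(0)H(0)G_0 > 0$ (so $(H_d)$ holds), and $h(s) = -s$, for which $h'\equiv -1 < 0$. Proposition~\ref{Propdeg} then yields nondegeneracy of every positive solution $\theta_\lambda$, and Lemma~\ref{obs2} (existence and uniqueness) gives $\theta_\lambda \in \mbox{int}({\cal P})$ precisely when $\lambda > \sigma_1[-A(0)H(0)G'(0)\Delta]$. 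The analogous (easier) argument applied to (\ref{stm2}) with $d\equiv 1$ and $h(s) = -s$ produces a unique nondegenerate $\theta_\mu$ exactly when $\mu > \lambda_1$.

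\textbf{Bifurcation and the trivial alternatives.} Fix $(\lambda,\mu)$ in the claimed region. According to where this pair lies relative to $\sigma_1[-A(0)H(0)G'(0)\Delta]$ and $\lambda_1$, at least one of the semitrivial branches $\theta_\lambda$ or $\theta_\mu$ is available, and I apply the corresponding theorem. Consider, say, the case $\mu > \lambda_1$: Theorem~\ref{Thsistema'} produces a continuum $\mathfrak{C}\subset\R\times\mbox{int}({\cal P})\times\mbox{int}({\cal P})$ emanating from $(\lambda_\mu,0,\theta_\mu)$. Alternative~3 of that theorem is ruled out by the uniqueness of $\theta_\mu$ in Lemma~\ref{obs2}, while alternative~4 would force $\mu = \sigma_1[-\Delta;b] = \lambda_1$, contradicting $\mu > \lambda_1$.

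\textbf{Filling the region via the unbounded alternative.} In alternative~1, the estimates of Lemma~\ref{cotasAp} bound $\|u\|_{\mathcal{C}^1}+\|v\|_{\mathcal{C}^1}$ in terms of $(\lambda,\mu)$, so unboundedness of $\mathfrak{C}$ in $\R\times\mathcal{C}_0^1(\overline\Omega)\times\mathcal{C}_0^1(\overline\Omega)$ forces $\pi_\lambda(\mathfrak{C})$ to be unbounded; Lemma~\ref{NexistenciaAp1}(a) prevents $\lambda\leq 0$ from occurring on $\mathfrak{C}$. Since $\pi_\lambda(\mathfrak{C})$ is a connected subset of $(0,+\infty)$ containing $\lambda_\mu$ in its closure and unbounded above, it must contain $(\lambda_\mu,+\infty)$, and the prescribed $\lambda$ indeed belongs to this half-line. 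An entirely symmetric argument applied to Theorem~\ref{Thsistema} (when $\theta_\lambda$ is available) produces a continuum whose $\mu$-projection covers $(\mu_\lambda,+\infty)$, handled by the non-existence bound in Lemma~\ref{NexistenciaAp1}(b).

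\textbf{The main obstacle.} The delicate point is alternative~2: the continuum may return to the other axis at some $(\lambda^*,\theta_{\lambda^*},0)$ with $\mu=\mu_{\lambda^*}$, in which case connectedness alone only guarantees $\pi_\lambda(\mathfrak{C})\supset[\min(\lambda_\mu,\lambda^*),\max(\lambda_\mu,\lambda^*)]$. To close the argument, I intend to use both theorems simultaneously: given $(\lambda,\mu)$ in the region, if neither continuum encounters the given point, then both must satisfy alternative~2, and one can show via the identities linking $\lambda^*$ with $\mu_{\lambda^*}$ and $\mu^*$ with $\lambda_{\mu^*}$ (together with the monotonicity of $\lambda\mapsto\mu_\lambda$) that the two partial coverages already overlap the entire region. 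Making this combinatorial sweep precise is the step I expect to require the most care.
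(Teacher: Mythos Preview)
Your overall strategy matches the paper's: apply Theorems~\ref{Thsistema} and~\ref{Thsistema'}, discard alternatives~3 and~4 via uniqueness (Lemma~\ref{obs2}) and the strict parameter inequalities, and use Lemmas~\ref{cotasAp} and~\ref{NexistenciaAp1} to force the parameter projections to be unbounded half-lines. The divergence is in your treatment of alternative~2, which you frame as the main obstacle requiring a ``combinatorial sweep'' with both continua.

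The paper's observation is that no such sweep is needed, because alternative~2 is \emph{impossible} when $\mu>\lambda_1$ is fixed and Theorem~\ref{Thsistema'} is applied. Indeed, alternative~2 would yield a positive solution $(\lambda^*,\theta_{\lambda^*})$ of (\ref{stl2}) with $\mu=\mu_{\lambda^*}=\sigma_1[-\Delta-c\theta_{\lambda^*}]$; but $\theta_{\lambda^*}>0$ and monotonicity of the principal eigenvalue give $\mu_{\lambda^*}<\sigma_1[-\Delta]=\lambda_1$, contradicting $\mu>\lambda_1$. Hence only alternative~1 survives in this direction, and your unbounded-projection argument immediately delivers $(\lambda_\mu,\infty)\subset\pi_\lambda(\mathfrak{C})$. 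This already covers the entire region $\{\mu>\lambda_1,\ \lambda>\lambda_\mu\}$.

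What remains is the strip $\{\mu_\lambda<\mu\leq\lambda_1\}$, which by (\ref{muAp1}) is nonempty only when $\lambda>\sigma_1[-A(0)H(0)G'(0)\Delta]$. Here the paper fixes such $\lambda$, applies Theorem~\ref{Thsistema}, and allows alternative~2 to occur: if it does, the landing point $(\mu^*,0,\theta_{\mu^*})$ necessarily has $\mu^*>\lambda_1$ (since $\theta_{\mu^*}$ exists), so connectedness gives $(\mu_\lambda,\lambda_1]\subset(\mu_\lambda,\mu^*)\subset\pi_\mu(\mathfrak{C}')$; if instead alternative~1 holds, Lemma~\ref{NexistenciaAp1}(b) yields $(\mu_\lambda,\infty)\subset\pi_\mu(\mathfrak{C}')$. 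Either way the strip is covered. In short, your sketch would collapse to exactly this once you attempted to make it precise, since the ``both in alternative~2'' scenario you anticipate is vacuous.
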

\begin{proof}
First, note that by ($H_A$), ($H_G$) and ($H_H$), all hypothesis of Theorems~\ref{Thsistema} and~\ref{Thsistema'} are satisfied.

Now fix $\mu > \lambda_1$. By Theorem~\ref{Thsistema'}, from the point 
$$
(\lambda,u,v) = (\lambda_\mu, 0, \theta_\mu)
$$
emanates a continuum $\mathfrak{C} \subset \R \times \mbox{int}({\cal P}) \times\mbox{int}({\cal P})$ of coexistence states of (\ref{Ap1}) and one of the alternatives of Theorem~\ref{Thsistema'} occurs. By uniqueness of positive solution of (\ref{stm2}), alternative 3 cannot be satisfied. Moreover, since  $\mu > \lambda_1$, alternative 4 also cannot occur. We will show that alternative 2 is not true and to this end we will proceed by contradiction. Otherwise, there exists a positive solution of (\ref{stl2}), $(\lambda^*,\theta_{\lambda^*}$), such that
$$
\mu= \mu_{\lambda^*}= \sigma_1 [-\Delta - c \theta_\lambda^*]. 
$$
However, since the map $\lambda \in (\lambda_1,\infty) \mapsto \sigma_1 [-\Delta - c \theta_\lambda]$ is decreasing, we can infer from above equality that
$$
\mu = \mu_{\lambda^*} \leq \sigma_1[-\Delta] =\lambda_1,
$$
which is a contradiction with the initial assumption $\mu > \lambda_1$. 

Therefore, $\mathfrak{C}$ is unbounded in $\R \times \mathcal{C}_0^1(\overline{\Omega}) \times \mathcal{C}_0^1(\overline{\Omega})$. Once that the coexistence states of (\ref{Ap1}) are bounded in $\mathcal{C}_0^1(\overline{\Omega}) \times \mathcal{C}_0^1(\overline{\Omega})$ (see Lemma~\ref{cotasAp}), then  $\mbox{Proj}_\R \mathfrak{C}$ is unbounded. Finally, it follows from Lemma~\ref{NexistenciaAp1} that it extends to infinity in positive values of $\lambda$. By global nature of $\mathfrak{C}$, it follows that $(\lambda_\mu,\infty) \subset \mbox{Proj}_\R \mathfrak{C} $. Since $\mu > \lambda_1$ is arbitrary, we obtain that (\ref{Ap2}) possesses at least one coexistence states for all $(\lambda,\mu) \in \R^2$ such that $\mu > \lambda_1$ and $\lambda> \lambda_\mu$.

Now, fix $\lambda > \sigma_1[-A(0)H(0)G'(0) \Delta]$. By Theorem~\ref{Thsistema}, from the point 
$$ 
(\mu,u,v) = (\mu_\lambda, \theta_\lambda,0)
$$
emanates a continuum $\mathfrak{C}' \subset \R\times\mbox{int}({\cal P}) \times \mbox{int}({\cal P})$ of coexistence states of (\ref{Ap1}) and one of the alternatives of Theorem~\ref{Thsistema} occurs. Arguing as above, alternatives 3 and 4 cannot be satisfied. Hence, it happens or alternative 1 or 2.

Suppose that alternative 1 holds. Then $\mathfrak{C}'$ is unbounded in $\R\times\mbox{int}({\cal P}) \times \mbox{int}({\cal P})$. Since the coexistence states of (\ref{Ap1}) are bounded in $\mathcal{C}_0^1(\overline{\Omega}) \times \mathcal{C}_0^1(\overline{\Omega})$, $\mbox{Proj}_\R \mathfrak{C}'$ must be unbounded in $\mu \in \R$. Since (\ref{Ap1}) does not have coexistence states for $\mu$ small (according to Lemma~\ref{NexistenciaAp1} (b)), by global nature of $\mathfrak{C}'$, we find that
$$
(\mu_\lambda, \infty) \subset \mbox{Proj}_\R \mathfrak{C}'.
$$
In particular, $(\mu_\lambda, \lambda_1] \subset \mbox{Proj}_\R \mathfrak{C}'$. This result combined with the coexistence region obtained above  prove the existence of coexistence states of (\ref{Ap1}) for all $(\lambda,\mu) \in \R^2$ such that $\mu > \mu_\lambda$ and $\lambda> \lambda_\mu$.

Suppose now that alternative 2 holds. Then there exists a positive solution $(\mu^*,\theta_{\mu^*})$ of (\ref{stm2}) such that
$$
\lambda = \lambda_{\mu^*} = \sigma_1 [-div(G'(0)A(\theta_{\mu^*})H(\theta_{\mu^*})e^{-h(\theta_{\mu^*})}\nabla) + c \theta_{\mu^*} e^{-h(\theta_{\mu^*})};e^{-h(\theta_{\mu^*})}],
$$
and $(\mu^*, 0, \theta_{\mu^*}) \in \mathfrak{C}'$. On the other hand, since (\ref{stm2}) admits a positive solution if, and only if $\mu > \lambda_1$, then $\mu^* >\lambda_1$. Hence, by global nature of $\mathfrak{C}'$, it becomes apparent that
$$
(\mu_\lambda, \lambda_1] \subset (\mu_\lambda, \mu^*) \subset \mbox{Proj}_\R \mathfrak{C}',
$$
and again we obtain the result.
\end{proof}
Figure \ref{fig1} illustrates a possible region of coexistence of (\ref{Ap1}) given by condition (\ref{condicoe1}) of Theorem~\ref{thap1}.

\begin{figure}
\centering
\includegraphics[scale=0.8]{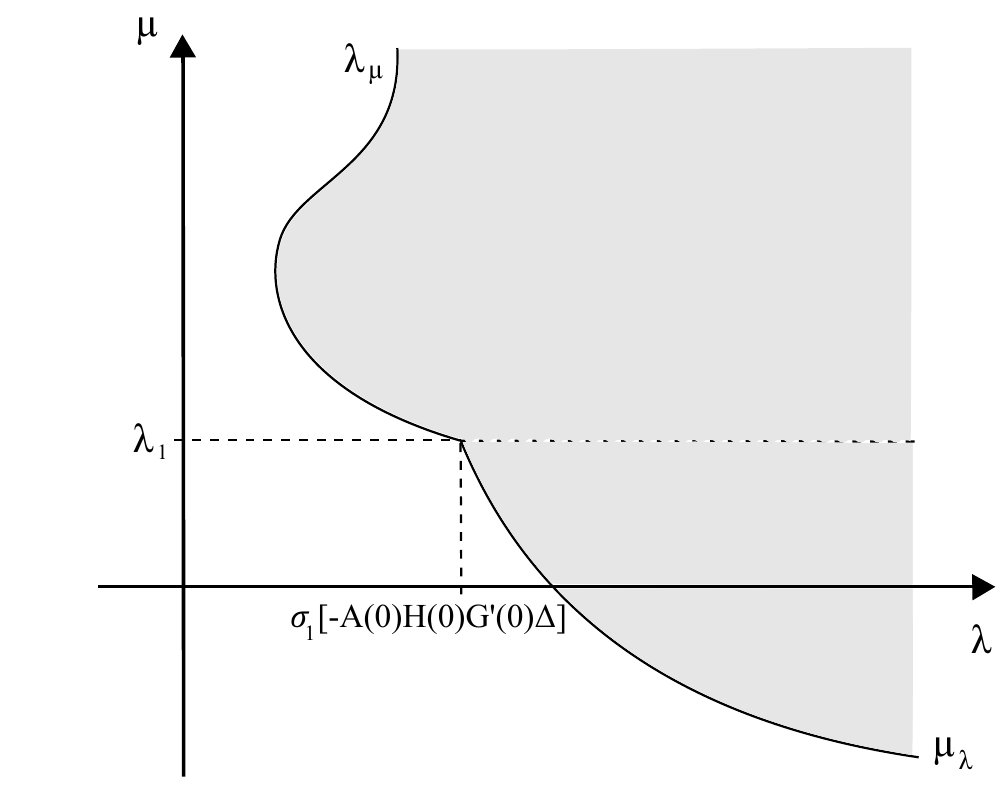}
\caption{\label{fig1} A region of coexistence of (\ref{Ap1}).}
\end{figure}

\subsection{A chemotaxis model}
We now apply the bifurcation Theorems~\ref{Thsistema} and~\ref{Thsistema'} to study the following chemotaxis model:
\begin{equation}\label{Ap2}
    \left\{ \begin{array}{ll}
         -\Delta u + \mbox{div}(\chi f(v) u \nabla v) = u(\lambda - u +bv)& \mbox{in}~\Omega,\\
         -\Delta v= v(\mu - v - cu)& \mbox{in}~\Omega,\\
         u=v=0 & \mbox{on}~\partial \Omega,
    \end{array} \right.
\end{equation}
where $f:[0,\infty) \rightarrow [0,\infty)$ is a function of class $\mathcal{C}^2$, $\chi$, $c$ are positive constants and $b \in \R$. Although chemotaxis models have been extensively studied in recent years (not so much the stationary models), we refer to \cite{fujie}, \cite{stinner} and \cite{winkler} and references therein, where a nonlinear sensitivity term is included. See also \cite{tello} for a chemotaxis model with competition interaction. 

We emphasize that (\ref{Ap2}) is not included in the hypotheses of the abstract model (\ref{Ap1}).

To begin our analysis, note that the semitrivial solutions $(u,0)$ and $(0,v)$ of (\ref{Ap2}) are the positive solutions of the logistic equation (\ref{logistico}) with $d \equiv 1$ and, as discussed in Lemma~\ref{obs2}, it possesses a (unique and non-degenerate) positive solution if, and only if, $\gamma> \lambda_1$ and it will be denoted by $\theta_\gamma$. In (\ref{Ap2}), the eigenvalues given by Theorems~\ref{Thsistema} and~\ref{Thsistema'} are:
\begin{equation}\label{ml2}
    \mu_\lambda := \sigma_1[-\Delta + c \theta_\lambda], \quad \lambda > \lambda_1
\end{equation}
and
\begin{equation}\label{lm2}
    \lambda_\mu := \sigma_1[-\mbox{div}(e^{\chi F( \theta_\mu)} \nabla) - b \theta_\mu e^{\chi F( \theta_\mu)}; e^{\chi F( \theta_\mu)}], \quad \mu > \lambda_1,
\end{equation}
where $ F(z):= \int_0^zf(s)ds$. We extend these functions by $\lambda_\mu:=\lambda_1$ and $\mu_\lambda:=\lambda_1$, for each $\mu,\lambda \leq \lambda_1$.

Note that, since $f(\cdot)$ is a non-negative function, we have that $F(\cdot)$ is non-decreasing.

The next lemma gives us a result of a priori bounded of the coexistence states of (\ref{Ap2}).
\begin{Lema}\label{cotasAp2}
Suppose that $(u,v)$ is a coexistence state of (\ref{Ap2}) with $\mu >\lambda_1$. Then, there exists a positive constant $C = C(\lambda,\mu,b,c,\chi)$ such that
$$
\|u\|_{\mathcal{C}^1},~\|v\|_{\mathcal{C}^1} \leq C.
$$
\end{Lema}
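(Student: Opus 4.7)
The plan is first to obtain $L^\infty$ bounds on $v$ and then $u$, and subsequently to bootstrap via elliptic regularity to the desired $\mathcal{C}^1$ bounds. The bound on $v$ is immediate: the $v$-equation has no cross-diffusion, so at an interior maximum point $x_M$ of $v$ the weak maximum principle yields $0 \leq -\Delta v(x_M) = v(x_M)(\mu - v(x_M) - c u(x_M))$, and using $u \geq 0$ gives $\|v\|_0 \leq \mu$.

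The $L^\infty$ bound on $u$ is the main difficulty, because the chemotactic drift $\mbox{div}(\chi f(v) u \nabla v)$ obstructs a direct comparison argument on $u$. My plan is to perform the Hopf--Cole-type substitution $w := u\, e^{-\chi F(v)}$, where $F(z) := \int_0^z f(s)\, ds$. A direct computation gives $e^{\chi F(v)} \nabla w = \nabla u - \chi f(v) u \nabla v$, so the first equation rewrites as the pure divergence-form problem
\begin{equation*}
-\mbox{div}\bigl(e^{\chi F(v)} \nabla w\bigr) = u(\lambda - u + b v),
\end{equation*}
with $w = 0$ on $\partial \Omega$ and $w > 0$ in $\Omega$; hence $w$ attains its maximum at some interior point $x_M$. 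Evaluating at $x_M$, where $\nabla w(x_M) = 0$ and $-\Delta w(x_M) \geq 0$, one obtains $u(x_M)\bigl(\lambda - u(x_M) + b v(x_M)\bigr) \geq 0$, whence $u(x_M) \leq \lambda + b^+ \mu$, with $b^+ := \max\{b, 0\}$. Since $\|w\|_0 = w(x_M) \leq u(x_M)$ (because $e^{-\chi F(v(x_M))} \leq 1$) and $u = w\, e^{\chi F(v)}$, this yields $\|u\|_0 \leq (\lambda + b^+ \mu)\, e^{\chi F(\mu)}$.

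Once $\|u\|_0$ and $\|v\|_0$ are under control, a standard bootstrap delivers the $\mathcal{C}^1$ bounds. The right-hand side of the $v$-equation is uniformly in $L^\infty$, so $L^p$ elliptic regularity gives $v \in W^{2,p}(\Omega)$ uniformly for every $p > 1$; Sobolev embedding then produces a uniform $\mathcal{C}^{1,\alpha}$ bound on $v$ and, in particular, uniform $L^\infty$ bounds on $\nabla v$ and $\Delta v$. Expanding the first equation in non-divergence form,
\begin{equation*}
-\Delta u + \chi f(v)\, \nabla v \cdot \nabla u = u(\lambda - u + b v) - \chi f'(v)\, u\, |\nabla v|^2 - \chi f(v)\, u\, \Delta v,
\end{equation*}
both the drift coefficient $\chi f(v) \nabla v$ and the full right-hand side are uniformly bounded in $L^\infty$, so $L^p$ theory for linear elliptic equations with bounded coefficients yields $u \in W^{2,p}(\Omega)$ uniformly, and hence $\|u\|_{\mathcal{C}^1} \leq C$. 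I expect Step 2 to be the only real obstacle: the chemotactic drift forces one to abandon direct maximum-principle arguments on $u$ itself and to work instead with the drift-free equation for $w = u\, e^{-\chi F(v)}$, and this substitution is what makes the whole proof go through.
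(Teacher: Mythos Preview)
Your proof is correct and follows essentially the same route as the paper: bound $\|v\|_0$ first, perform the Hopf--Cole-type substitution $w = u\,e^{-\chi F(v)}$ to rewrite the $u$-equation in pure divergence form, obtain an $L^\infty$ bound on $u$, and then bootstrap via elliptic regularity to $\mathcal{C}^1$. The only cosmetic differences are that the paper bounds $v$ by comparison with the logistic equation ($v\leq\theta_\mu\leq\mu$) rather than a pointwise maximum argument, and for the $L^\infty$ bound on $u$ it refers to the integral test-function technique of Lemma~\ref{cotasAp} (testing against $[w-K]_+$) rather than your evaluation at the interior maximum of $w$; the substitution is the decisive step in both.
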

\begin{proof}
Let $(u,v)$ a coexistence state of (\ref{Ap2}). Then $v$ satisfies
$$-\Delta v = v(\mu - v -cu) \leq v(\mu- v), \quad \mbox{in}~ \Omega.$$
Thus, $v$ is a positive subsolution of (\ref{logistico}) with $\gamma = \mu$ and $d \equiv 1$, whose the unique solution is $\theta_\mu$. By the uniqueness of positive solutions,
$$
v \leq \theta_\mu \leq \mu.
$$

On the other hand, the first equation of (\ref{Ap2}) can be re-written as
\begin{equation*}
    \left\{ \begin{array}{ll}
         -\mbox{div}(e^{\chi F(v)} \nabla (ue^{-\chi F(v)})) = u(\lambda - u +bv)& \mbox{in}~\Omega,\\
          u=0 & \mbox{on}~\partial \Omega.
    \end{array} \right.
\end{equation*}
By adapting the proof of  Lemma~\ref{cotasAp}, one can obtain the  existence of a positive constant $C_0 = C_0(\lambda,\mu,b)$ such that
$$\|u\|_0 \leq C_0.$$
Finally, by elliptic regularity, there exists a positive constant $C= C(\lambda,\mu,b,c,\chi)$ such that
$$ \|u\|_{\mathcal{C}^1},~\|v\|_{\mathcal{C}^1} \leq C.$$
\end{proof}

The next result establishes the nonexistence of coexistence states for certain values of $\lambda$ and $\mu$, including for $\lambda$ large.

\begin{Lema}\label{NexistenciaAp2}
\begin{enumerate}
    \item[(a)] If $\mu\leq \lambda_1$, then (\ref{Ap2}) does not admit coexistence states.
    \item[(b)] Fix $\mu> \lambda_1$. Then, there exists a constant $C=C(\mu)$ such that (\ref{Ap2}) does not admit coexistence states for $\lambda < C(\mu)$.
    \item[(c)] Fix $\mu>\lambda_1$. Then, there exists a constant $\Lambda_0= \Lambda_0(\mu)>0$  such that (\ref{Ap2}) does not admit coexistence states for $\lambda>\Lambda_0(\mu)$.
\end{enumerate}
\end{Lema}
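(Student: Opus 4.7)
\textbf{Proof plan for Lemma \ref{NexistenciaAp2}.} Part (a) follows directly from the second equation of (\ref{Ap2}): if $(u,v)$ is a coexistence state, then $v>0$ is a principal Dirichlet eigenfunction of $-\Delta+v+cu$ with eigenvalue $\mu$. Since $v+cu>0$ in $\Omega$, strict monotonicity of the principal eigenvalue in the potential yields $\mu>\sigma_1[-\Delta]=\lambda_1$, contradicting $\mu\leq\lambda_1$.

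For both (b) and (c) the key manoeuvre is to rewrite the first equation of (\ref{Ap2}) via the substitution $w:=u\,e^{-\chi F(v)}$, under which it becomes
\begin{equation*}
-\mbox{div}(e^{\chi F(v)}\nabla w)+e^{\chi F(v)}(u-bv)\,w=\lambda\,e^{\chi F(v)}w\quad\text{in }\Omega,\qquad w|_{\partial\Omega}=0.
\end{equation*}
Since $w>0$, this identifies $\lambda$ with the principal eigenvalue $\sigma_1[-\mbox{div}(e^{\chi F(v)}\nabla)+e^{\chi F(v)}(u-bv);\,e^{\chi F(v)}]$, which I shall attack variationally. The a priori bound $v\leq\theta_\mu\leq\mu$ from the proof of Lemma \ref{cotasAp2} furnishes $1\leq e^{\chi F(v)}\leq e^{\chi F(\mu)}$ throughout.

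For (b), I discard the nonnegative contribution of $u$ in the potential and use $|bv|\leq|b|\mu$: monotonicity of the principal eigenvalue gives $\lambda\geq\sigma_1[-\mbox{div}(e^{\chi F(v)}\nabla);e^{\chi F(v)}]-|b|\mu$, and a direct Rayleigh-quotient comparison using the two-sided weight bound yields $\sigma_1[-\mbox{div}(e^{\chi F(v)}\nabla);e^{\chi F(v)}]\geq\lambda_1\,e^{-\chi F(\mu)}$. Hence $\lambda\geq C(\mu):=\lambda_1\,e^{-\chi F(\mu)}-|b|\mu$, ruling out coexistence for $\lambda<C(\mu)$.

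For (c) the main obstacle, and the essential coupling between the two equations, lies in producing a single test function linking both. Positivity of $v$ forces $\sigma_1[-\Delta+cu]\leq\mu$ (by monotonicity, dropping the potential $v$). Let $\varphi_0>0$ be the Dirichlet eigenfunction of $-\Delta+cu$; the eigenvalue identity
\begin{equation*}
\int_\Omega|\nabla\varphi_0|^2+c\int_\Omega u\varphi_0^2=\sigma_1[-\Delta+cu]\int_\Omega\varphi_0^2\leq\mu\int_\Omega\varphi_0^2
\end{equation*}
yields separately $\int_\Omega|\nabla\varphi_0|^2\leq\mu\int_\Omega\varphi_0^2$ and $\int_\Omega u\varphi_0^2\leq(\mu/c)\int_\Omega\varphi_0^2$. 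Substituting $\varphi_0$ into the Rayleigh characterization of $\lambda$ coming from the weighted eigenvalue problem above, and applying $e^{\chi F(v)}\leq e^{\chi F(\mu)}$ in the numerator together with $e^{\chi F(v)}\geq 1$ in the denominator and $|bv|\leq|b|\mu$ in the potential, one obtains
\begin{equation*}
\lambda\leq e^{\chi F(\mu)}\mu\left(1+\tfrac{1}{c}+|b|\right)=:\Lambda_0(\mu),
\end{equation*}
so coexistence is impossible for $\lambda>\Lambda_0(\mu)$.
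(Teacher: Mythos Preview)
Your arguments for (a) and (b) are correct and essentially the same as the paper's: both use the identity $\mu=\sigma_1[-\Delta+v+cu]$ for (a), and for (b) both perform the substitution $w=ue^{-\chi F(v)}$ and bound the resulting weighted principal eigenvalue from below (the paper splits into the cases $b\le 0$ and $b>0$, whereas you handle both at once with $|b|$, which is a cosmetic difference).

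Your argument for (c), however, is genuinely different from---and considerably more elementary than---the paper's. The paper argues by contradiction: assuming coexistence states exist for arbitrarily large $\lambda$, it builds a subsolution comparison to show $u\ge\tau(\lambda)\phi_\lambda$ with $\tau(\lambda)\to\infty$, then feeds this into $\mu\ge\sigma_1[-\Delta+c\tau(\lambda)\phi_\lambda]$ and proves the right-hand side diverges. That last step is nontrivial because $\phi_\lambda$ depends on $\lambda$ through $v$; the paper invokes $H$-convergence (homogenization) to control the limit of $\phi_\lambda$ and reach a contradiction. Your route bypasses all of this: you take the principal eigenfunction $\varphi_0$ of $-\Delta+cu$ (whose eigenvalue is at most $\mu$ by the $v$-equation), extract from its Rayleigh identity the two bounds $\int|\nabla\varphi_0|^2\le\mu\int\varphi_0^2$ and $\int u\varphi_0^2\le(\mu/c)\int\varphi_0^2$, and then insert $\varphi_0$ as a test function into the variational characterization of $\lambda$ coming from the $w$-equation. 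Because the operator $-\mathrm{div}(e^{\chi F(v)}\nabla)$ is self-adjoint, this Rayleigh-quotient upper bound is legitimate, and the two-sided weight control $1\le e^{\chi F(v)}\le e^{\chi F(\mu)}$ plus $-bv\le|b|\mu$ immediately yields the explicit constant $\Lambda_0(\mu)=e^{\chi F(\mu)}\mu(1+1/c+|b|)$. This is both shorter and more quantitative than the paper's approach; the paper's argument, on the other hand, illustrates a robust compactness technique that would survive in situations where no self-adjoint structure is available for the first equation.
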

\begin{proof}
Let $(u,v)$ be a coexistence state of (\ref{Ap2}). Then $v$ satisfies 
$$
-\Delta v + (v+cu)v = \mu v \quad \mbox{in}~\Omega, \quad v=0 \quad \mbox{on}~\partial\Omega.
$$
Consequently,
$$\mu= \sigma_1[-\Delta +v +cu].$$
By the monotonicity properties of the principal eigenvalue, we find that $\mu > \sigma_1[-\Delta]= \lambda_1$, proving the paragraph (a).

Now, we will prove (b). Performing the change of variable $w =ue^{-\chi F(v)}$ in the first equation of (\ref{Ap2}), we obtain that the positive function $w$ satisfies
\begin{equation}\label{eq:W}
    -\mbox{div}(e^{\chi F(v)} \nabla w)  +  (w  e^{\chi F(v)}-bv) w e^{\chi F(v)} =  \lambda w e^{\chi F(v)}, \quad\mbox{in}~\Omega, \quad w=0\quad\mbox{on}~\partial\Omega.
\end{equation}
Thus, 
$$
\lambda = \sigma_1[-\mbox{div}(e^{\chi F(v)} \nabla)  +  (w  e^{\chi F(v)}-bv) e^{\chi F(v)};e^{\chi F(v)}].
$$
If $b\leq0$, combining the monotonicity properties of the principal eigenvalue with a priori bound $v \leq \mu$ and $1\leq e^{\chi F(v)} \leq e^{\chi F(\mu)}$ , we can infer that
$$
\lambda >  \sigma_1[-\Delta;e^{\chi F(v)}]>
\sigma_1[-\Delta;e^{\chi F(\mu)}] = \lambda_1e^{-\chi F( \mu)} =:C_1(\mu),
$$
since $\lambda_1=\sigma_1[-\Delta]>0$. On the other hand, if $b>0$ we have
$$
   \lambda > \sigma_1[-\Delta-b \mu e^{\chi F(\mu)};e^{\chi F(v)}].
$$
Now, we recall that the monotonicity of $\sigma_1[-\Delta-b \mu e^{\chi F(\mu)};e^{\chi F(v)}]$ with respect to weight function $e^{\chi F(v)}$ depends on  $\text{sign }\sigma_1[-\Delta-b \mu e^{\chi F(\mu)}]$. Thus, it follows from above inequality that
$$
   \lambda > 
  \left\{ \begin{array}{ll}
         \sigma_1[-\Delta-b \mu e^{\chi F(\mu)};e^{\chi F(\mu)}] = \lambda_1e^{-\chi F(\mu)} - b\mu& \mbox{if}~\sigma_1[-\Delta-b \mu e^{\chi F(\mu)}]>0,   \\
        0& \mbox{if} ~\sigma_1[-\Delta-b \mu e^{\chi F(\mu)}]=0,\\
        \sigma_1[-\Delta-b \mu e^{\chi F(\mu)};1] = \lambda_1 - b\mu e^{\chi F(\mu)} & \mbox{if}~\sigma_1[-\Delta-b \mu e^{\chi F(\mu)}]<0.
   \end{array}\right.
$$

In all cases, there exists a constant $C=C(\mu)$ such that (\ref{Ap2}) does not  admit coexistence states for $\lambda \leq C(\mu)$.

Finally, to prove (c) we will adapt the proof of Proposition 6.5 of \cite{DS2008}. We argue by contradiction. Fix $\mu>\lambda_1$ and assume that   there exists a coexistence state for all $\lambda>0$ large enough. Using again the change of variable $w =ue^{-\chi F(v)}$ we obtain that $w$ satisfies (\ref{eq:W}). It follows from $1\leq e^{\chi F(v)} \leq e^{\chi F(\mu)}$ that
$$
-\mbox{div}(e^{\chi F(v)} \nabla w) \geq \lambda w - e^{2\chi F(\mu)}w^2 + K(b,\mu) w, \quad \mbox{in}~\Omega,
$$
where $K(b,\mu):= \min\{0,b \mu e^{\chi F(\mu)}\} \leq 0$. Therefore $w$ is a supersolution of
\begin{equation}\label{eq:z}
\left\{ \begin{array}{ll}
     -\mbox{div}(e^{\chi F(v)} \nabla z) - K(b,\mu) z =  \lambda z - e^{2\chi F(\mu)}z^2& \mbox{in}~\Omega,  \\
     z=0&\mbox{on}~\partial\Omega. 
\end{array}  \right.
\end{equation}
It is well-known, see for instance \cite{DS2008}, that (\ref{eq:z}) has a (unique) positive solution for each $\lambda> \sigma_1[-\mbox{div}(e^{\chi F(v)} \nabla) - K(b,\mu)]$, say $z_\lambda$, such that
$$\frac{\lambda - \sigma_1[-\mbox{div}(e^{\chi F(v)} \nabla) - K(b,\mu)]}{e^{2\chi F(\mu)}\|\phi_\lambda\|_0} \phi_\lambda \leq z_\lambda,$$
where $\phi_\lambda$ stands for the positive eigenfunction associated to $\sigma_1[-\mbox{div}(e^{\chi F(v)} \nabla) - K(b,\mu)]$ with $|\phi_\lambda|_2 =1$. (We point out that $\phi_\lambda$ depends on $\lambda$ because $v$ depends on $\lambda$). Since $w$ is a supersolution of (\ref{eq:z}), by uniqueness,
$$
\frac{\lambda - \sigma_1[-\mbox{div}(e^{\chi F(v)} \nabla) - K(b,\mu)]}{e^{2\chi F(\mu)}\|\phi_\lambda\|_0} \phi_\lambda \leq w.
$$
On the other hand, by the monotonicity properties of the principal eigenvalue we have
$$
\sigma_1[-\mbox{div}(e^{\chi F(v)} \nabla) - K(b,\mu)] \leq \sigma_1[-\mbox{div}(e^{\chi F(\mu)} \nabla) - K(b,\mu)] :=s(\mu)
$$
and so, denoting 
$$
\tau(\lambda):= \frac{\lambda -s(\mu)}{e^{\chi F(\mu)}\|\phi_\lambda\|_0} 
$$
it holds
\begin{equation}\label{tau}
    \tau(\lambda) \phi_\lambda \leq w \leq u.
\end{equation}

We recall that $\|\phi_\lambda\|_0$ is uniform bound with respect to $\lambda$ (see, for instance, Theorem~4.1 in \cite{Stampacchia1965}). Then,
\begin{equation*}
    \tau(\lambda) \geq \frac{\lambda -s(\mu)}{Ce^{\chi F(\mu)}} \rightarrow \infty \quad \mbox{as}~\lambda \rightarrow \infty.
\end{equation*}
Now, since $v$ is a positive solution of the second equation of (\ref{Ap2}) and using (\ref{tau}), we get
\begin{equation}\label{uhh}
\mu= \sigma_1[-\Delta +v +cu] \geq \sigma_1[-\Delta +c \tau(\lambda)\phi_\lambda]=:g(\lambda).     
\end{equation}
Once that $\mu> \lambda_1$ is fixed,  to finish the proof is sufficient to show that
\begin{equation}\label{ginfinito}
    \lim_{\lambda \rightarrow \infty} g(\lambda) = +\infty,
\end{equation}
because (\ref{ginfinito}) produces a contradiction with (\ref{uhh}). In order to prove (\ref{ginfinito}) we argue by contradiction.
Observe that 
\begin{equation}\label{ginf}
    g(\lambda) = \inf_{\varphi \in H_0^1(\Omega)\setminus\{0\}} \frac{\int_\Omega |\nabla \varphi|^2 + c\tau(\lambda)\int_\Omega \phi_\lambda \varphi^2}{\int_\Omega \varphi^2}.
\end{equation}
Suppose otherwise that $g(\lambda)$ is bounded. Then, there exists a sequence $\varphi_\lambda \in H_0^1(\Omega)$ with $|\varphi_\lambda|_2=1$ and that attains the infimum (\ref{ginf}), that is,
\begin{equation}\label{geq}
    \int_\Omega |\nabla \varphi_\lambda|^2 + c\tau(\lambda)\int_\Omega \phi_\lambda \varphi_\lambda^2 =g(\lambda).
\end{equation}
Since $g(\lambda)$ is bounded, it follows from (\ref{geq}) that $\varphi_\lambda$ is bounded in $H_0^1(\Omega)$ and, up to a subsequence if necessary, there exists $\varphi_0 \geq 0$, $|\varphi_0|_2=1$ and $\varphi_0 \neq 0$, such that
\begin{equation}\label{varphil}
\varphi_\lambda \rightharpoonup \varphi_0 \quad\mbox{in}~H_0^1(\Omega), \quad \quad
    \varphi_\lambda \rightarrow \varphi_0 \quad\mbox{in}~L^2(\Omega) \quad\quad\mbox{as}~\lambda \rightarrow\infty.
\end{equation}
We study now $\phi_\lambda$. Combining the monotonicity properties of the principal eigenvalue with $1 \leq e^{\chi F(v)} \leq e^{\chi F(\mu)}$ yields
$$
\lambda_1=\sigma_1[-\Delta] \leq \sigma_1[-\mbox{div}(e^{\chi F(v)} \nabla) - K(b,\mu)] \leq \sigma_1[-\mbox{div}(e^{\chi F(\mu)} \nabla) - K(b,\mu)]. 
$$
Again since $\mu> \lambda_1$ is fixed, we can conclude that there exists $\sigma_0\geq\lambda_1>0$ such that (up to a subsequence if necessary)
$$
\sigma_1[-\mbox{div}(e^{\chi F(v)} \nabla) - K(b,\mu)] \rightarrow \sigma_0 \quad \mbox{as}~\lambda\rightarrow \infty.
$$
By definition,
\begin{equation}\label{phieq}
    -\mbox{div}(e^{\chi F(v)}\nabla \phi_\lambda) =\sigma_1[-\mbox{div}(e^{\chi F(v)} \nabla) - K(b,\mu)] \phi_\lambda \quad \mbox{in}~\Omega,
\end{equation}
and so,
$$
\int_\Omega|\nabla \phi_\lambda|^2 \leq \int_\Omega e^{\chi F(v)}|\nabla \phi_\lambda|^2 = \sigma_1[-\mbox{div}(e^{\chi F(v)} \nabla) - K(b,\mu)] \int_\Omega\phi_\lambda^2 \leq  \sigma_1[-\mbox{div}(e^{\chi F(\mu)} \nabla) - K(b,\mu)],
$$
whence we deduce that $\phi_\lambda$ is bounded in $H_0^1(\Omega)$ and, up to a subsequence if necessary, there exists $\phi_0 \geq 0$, $|\phi_0|_2=1$ and $\phi_0 \neq 0$, such that
\begin{equation}\label{phil}
\phi_\lambda \rightharpoonup \phi_0 \quad\mbox{in}~H_0^1(\Omega), \quad \quad
    \phi_\lambda \rightarrow \phi_0 \quad\mbox{in}~L^2(\Omega) \quad\quad\mbox{as}~\lambda \rightarrow\infty.
\end{equation}
Observe that (\ref{phieq}) is verified in $H^{-1}(\Omega)$, and so we can apply the homogenization technique (see, for instance, Theorem 2.1 in \cite{Kesavan}) and conclude that there exists a uniformly elliptic symmetric matrix $A \in (L^\infty(\Omega))^{N\times N}$ such that the following equation is verified in $H^{-1}(\Omega)$
$$
-\mbox{div}(A\nabla\phi_0) = \sigma_0 \phi_0.
$$
Since $\sigma_0 \phi_0 \geq 0$ and non-trivial, by the strong maximum principle $\phi_0>0$. Then, it follows from (\ref{geq}) that
$$
\limsup_{\lambda \rightarrow \infty} \int_\Omega\phi_\lambda \varphi_\lambda^2 = 0.
$$
In contrast, by (\ref{varphil}) and (\ref{phil}) we obtain that
$$
\limsup_{\lambda \rightarrow \infty} \int_\Omega\phi_\lambda \varphi_\lambda^2 = \int_\Omega \phi_0 \varphi_0^2>0,
$$
an absurdum. This completes the proof.
\end{proof}

We are ready to prove the main existence result:
\begin{Teo}\label{thap2}
Assume that $\mu>\lambda_1$ and let $\lambda_\mu$ and $\mu_\lambda$ be the functions defined by (\ref{lm2}) and (\ref{ml2}), respectively. Then if some of the following conditions are satisfied 
\begin{equation}
\label{final1}
\lambda > \lambda_\mu \quad\mbox{and} \quad \mu> \mu_\lambda
\end{equation}
or
\begin{equation}
\label{final2}
\lambda< \lambda_\mu \quad\mbox{and} \quad \mu < \mu_\lambda,
\end{equation}
then, there exists at least a coexistence state of (\ref{Ap2}).
\end{Teo}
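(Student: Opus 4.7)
The plan mirrors the strategy used in Theorem~\ref{thap1}: fix $\mu>\lambda_1$, apply Theorem~\ref{Thsistema'} at the bifurcation point $(\lambda_\mu,0,\theta_\mu)$ to obtain a continuum $\mathfrak{C}\subset\R\times\mbox{int}({\cal P})\times\mbox{int}({\cal P})$ of coexistence states of~(\ref{Ap2}), and then read off the values of $\lambda$ covered by $\mbox{Proj}_{\R}\overline{\mathfrak{C}}$. The qualitative difference with respect to Theorem~\ref{thap1} is the \emph{two-sided} non-existence provided by Lemma~\ref{NexistenciaAp2}(b)--(c), which forces the continuum to terminate at a secondary bifurcation point on the other semitrivial branch rather than simply escaping to infinity.

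My first step would be to rule out three of the four alternatives of Theorem~\ref{Thsistema'}. Alternative~3 fails because (\ref{stm2}) has a unique positive solution, given by Lemma~\ref{obs2}; alternative~4 fails because it would require $\mu=\sigma_1[-\Delta;b]=\lambda_1$, contradicting the hypothesis $\mu>\lambda_1$. For alternative~1, I would combine Lemma~\ref{cotasAp2} with Lemma~\ref{NexistenciaAp2}(b)--(c): the latter confines $\mbox{Proj}_{\R}\mathfrak{C}$ to the compact interval $[C(\mu),\Lambda_0(\mu)]$, on which the $\mathcal{C}^1_0$-bound from Lemma~\ref{cotasAp2} becomes uniform in $\lambda$, so $\mathfrak{C}$ is bounded in $\R\times\mathcal{C}^1_0(\overline{\Omega})\times\mathcal{C}^1_0(\overline{\Omega})$.

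Hence alternative~2 must hold, producing a positive solution $(\lambda^*,\theta_{\lambda^*})$ of (\ref{stl2}) with $\mu_{\lambda^*}=\mu$ and $(\lambda^*,\theta_{\lambda^*},0)\in\overline{\mathfrak{C}}$. Since $\lambda\mapsto\theta_\lambda$ is strictly increasing on $(\lambda_1,\infty)$, so is $\lambda\mapsto\mu_\lambda=\sigma_1[-\Delta+c\theta_\lambda]$, which continuously parametrises $(\lambda_1,\infty)$ onto itself. Consequently $\lambda^*$ is uniquely determined by $\mu_{\lambda^*}=\mu$, and (after extending $\mu_\lambda=\lambda_1$ for $\lambda\le\lambda_1$) the equivalences $\mu>\mu_\lambda\iff\lambda<\lambda^*$ and $\mu<\mu_\lambda\iff\lambda>\lambda^*$ hold.

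To conclude, connectedness of $\overline{\mathfrak{C}}$ makes $\mbox{Proj}_{\R}\overline{\mathfrak{C}}$ a closed interval containing both endpoints $\lambda_\mu$ and $\lambda^*$, and all of its interior values correspond to genuine coexistence states (the only semitrivial points on $\overline{\mathfrak{C}}$ being the two bifurcation endpoints). Conditions (\ref{final1}) and (\ref{final2}) reduce to $\lambda_\mu<\lambda<\lambda^*$ and $\lambda^*<\lambda<\lambda_\mu$ respectively, so in either case $\lambda$ lies strictly between the two endpoints and the theorem follows. The main obstacle I foresee is the careful exclusion of alternative~1: the a priori estimate of Lemma~\ref{cotasAp2} is only locally uniform in $\lambda$, so one must genuinely use the two-sided non-existence of Lemma~\ref{NexistenciaAp2} to trap $\mathfrak{C}$ in a compact $\lambda$-slab before global boundedness can be invoked.
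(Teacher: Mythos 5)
Your proposal is correct and follows essentially the same route as the paper: apply Theorem~\ref{Thsistema'} at $(\lambda_\mu,0,\theta_\mu)$, discard alternatives 3 and 4 via uniqueness and $\mu>\lambda_1$, discard alternative 1 by combining the a priori bounds of Lemma~\ref{cotasAp2} with the two-sided non-existence of Lemma~\ref{NexistenciaAp2}(b)--(c), and conclude from alternative 2 that $\mathfrak{C}$ joins $\lambda_\mu$ to the unique $\lambda^*$ with $\mu_{\lambda^*}=\mu$. In fact you make explicit two points the paper leaves implicit, namely that the non-existence results must first trap $\mbox{Proj}_\R\mathfrak{C}$ in a compact $\lambda$-interval before the locally uniform bound of Lemma~\ref{cotasAp2} yields boundedness of $\mathfrak{C}$, and that the monotonicity of $\lambda\mapsto\mu_\lambda=\sigma_1[-\Delta+c\theta_\lambda]$ is what translates conditions (\ref{final1}) and (\ref{final2}) into $\lambda$ lying strictly between $\lambda_\mu$ and $\lambda^*$.
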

\begin{proof}
As the reasoning is similar to the proof of  Theorem \ref{thap1}, we will be brief.

Fix $\mu > \lambda_1$. Since the semitrivial solutions of (\ref{Ap2}) are nondegenerate, by Theorem~\ref{Thsistema'}, from the point 
$$
(\lambda,u,v) = (\lambda_\mu, 0, \theta_\mu)
$$
emanates a continuum $\mathfrak{C} \subset \R \times \mbox{int}({\cal P}) \times\mbox{int}({\cal P})$ of coexistence states of (\ref{Ap2}) and one of the alternatives of Theorem~\ref{Thsistema'} is satisfied. Moreover, the alternatives 3 and 4 cannot  occur (see Remark \ref{obs}). Since (\ref{Ap2}) does not admit coexistence states for $\lambda$ small or large (according to Lemma \ref{NexistenciaAp2} (b) and (c)) and the coexistence states are bounded in $\mathcal{C}_0^1(\overline{\Omega})\times \mathcal{C}_0^1(\overline{\Omega})$ (see Lemma \ref{cotasAp2}), then  alternative 1 of Theorem~\ref{Thsistema'} cannot be satisfied either. Therefore, continuum $\mathfrak{C}$ satisfies alternative 2 of Theorem~\ref{Thsistema'}; that is, there exists a positive solution $(\lambda^*,\theta_{\lambda^*})$ such that $(\lambda^*,\theta_{\lambda^*},0)\in\overline{\mathfrak{C}} $ and
$$
\mu=\sigma_1[-\Delta+c \theta_{\lambda^*}]=\mu_{\lambda^*}.
$$
Hence, there exists coexistence state for $\lambda\in (\lambda_\mu,\lambda^*)$ (or  $\lambda\in (\lambda^*,\lambda_\mu)$).

Since $\mu> \lambda_1$ is arbitrary, we obtain the result.
\end{proof}

\begin{figure}
\centering
\includegraphics[scale=0.8]{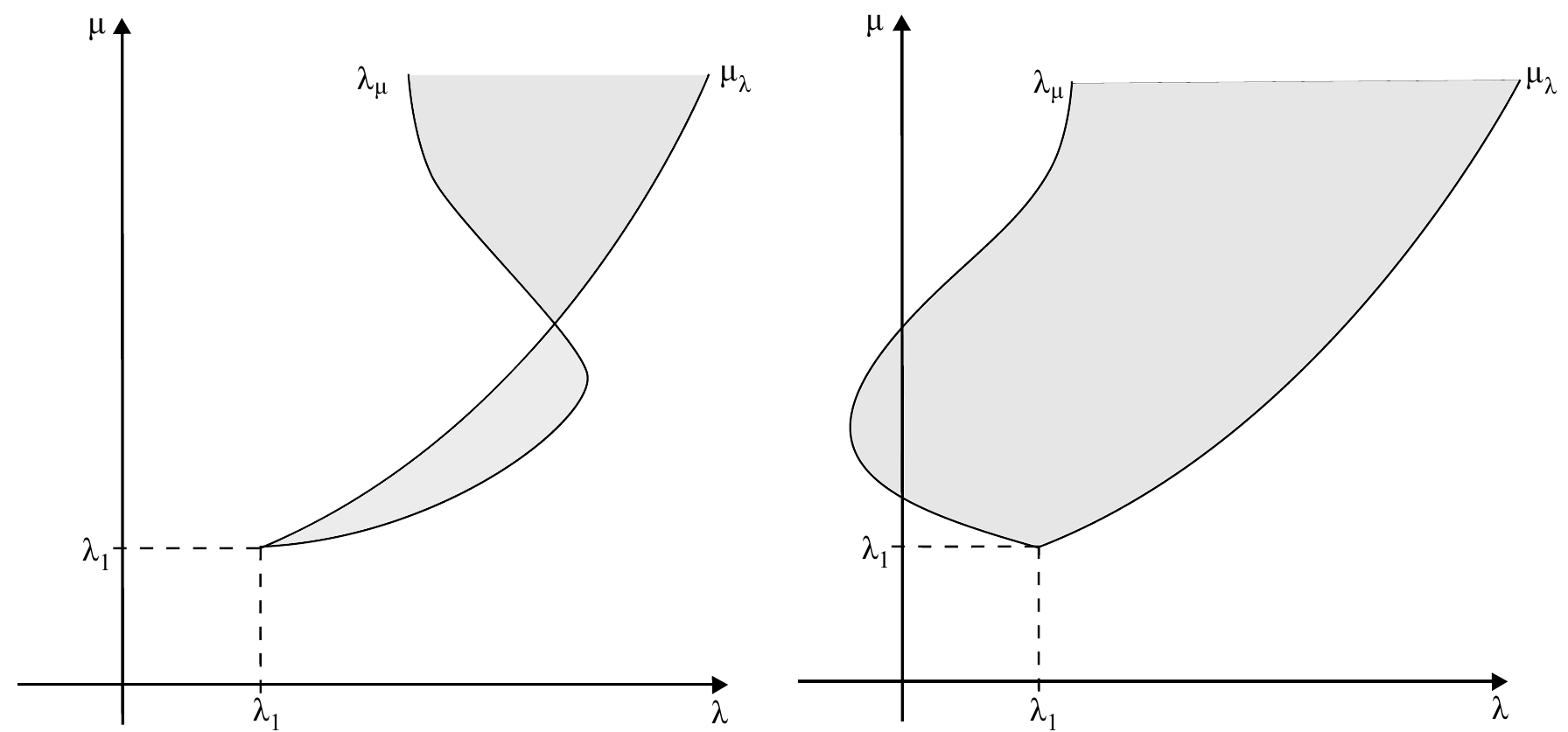}
\caption{\label{fig2}Admissible regions of coexistence of (\ref{Ap2}).}
\end{figure}

In Figure~\ref{fig2} we have represented some admissible region of coexistence of (\ref{Ap2}) given by Theorem \ref{thap2}.



\end{document}